	\tikzset{neg/.style={
			decoration={markings,
				mark= at position 0.5 with {
					\node[transform shape] (tempnode) {$\setminus$};
				}
			},
			postaction={decorate}
	}}
	\newtheorem{theorem}{Theorem}[section]
	\newtheorem{proposition}[theorem]{Proposition}
	\newtheorem{corollary}[theorem]{Corollary}
	\newtheorem{lemma}[theorem]{Lemma}
	\newtheorem{example}[theorem]{Example}
	\theoremstyle{definition}
	\newtheorem*{definition*}{Definition}
	\newtheorem*{proposition*}{Proposition}
	\newtheorem*{theorem*}{Theorem}
	\newtheorem*{corollary*}{Corollary}
	\newtheorem*{example*}{Example}
	\newtheorem*{problem*}{Problem}
	\theoremstyle{remark}
	\newtheorem{remark}[theorem]{Remark}
	\newcommand*{\prob}[1]{\mathbb{P}\left\{ #1 \right\}}
	\newcommand*{\Var}{\operatorname{Var}}
	\newcommand*{\Cov}{\operatorname{Cov}}
	\newcommand*{\Val}{\operatorname{val}}
	\newcommand*{\stcomp}[1]{{#1}^{\mathsf{c}}}
\begin{document}
		
		\title{A local central limit theorem for random walks on expander graphs}
		
		\author[Chiclana]{Rafael Chiclana}
		
		\author[Peres]{Yuval Peres}
		
		\address{Kent State University, Kent, Ohio, United States}
		\address{Beijing Institute of Mathematical Sciences and Applications, Beijing, China}
		\email{rchiclan@kent.edu, yperes@gmail.com}
		
		\keywords{Markov chains, Expander graphs, Central limit theorem}
		\subjclass{05C81, 05C48, 60F05}

		\begin{abstract}
			There is a long history of establishing central limit theorems for Markov chains. Quantitative bounds for chains with a spectral gap were proved by Mann and refined later. Recently, rates of convergence for the total variation distance were obtained for random walks on expander graphs, which are often used to generate sequences satisfying desirable pseudorandom properties. We prove a local central limit theorem with an explicit rate of convergence for random walks on expander graphs, and derive an improved bound for the total variation distance.
		\end{abstract}

		\maketitle
		
		\thispagestyle{plain}
		\section{Introduction}\label{sec:introduction}
		Given $\lambda<1$, a graph is considered to be a \textit{$\lambda$-expander} when the absolute value of all the eigenvalues of its transition matrix except $1$ are bounded above by $\lambda$. Expander graphs have a wide range of applications in areas such as derandomization, complexity theory, and coding theory (see \cite{hoory2006expander}). In particular, random walks on expander graphs are typically used to generate sequences satisfying desirable pseudorandom properties. They serve as an efficient replacement of $t$ independent sample vertices chosen uniformly at random. It is natural to study then how good of a replacement these sequences are, or equivalently, to measure the randomness of random walks on expander graphs. More precisely, consider a \textit{balanced labelling} on a regular graph $G=(V,E)$, that is, a map $\Val\colon V \longrightarrow \{0,1\}$ with $\sum_{v \in V} \Val(v)=|V|/2$. Given a test function $f\colon\{0,1\}^t \longrightarrow \mathbb{R}$, we compare $f(\Val(v_0),\ldots,\Val(v_{t-1}))$ when the vertices $v_0,\ldots,v_{t-1}$ are sampled either from a random walk, or independently and uniformly at random. This problem was studied by Guruswami and Kumar in \cite{guruswami2021pseudobinomiality} for sticky random walks, and later on, by Cohen, Peri, and Ta-Shma in \cite{cohen2021expander} for general expander graphs. Significant progress can be found also in Cohen et al. \cite{cohen2022expander}, Golowich-Vadhan \cite{golowich2022pseudorandomness}, and Golowich \cite{golowhich2022a}. In this paper, we focus on the asymptotic behavior of $f(\Val(v_0),\ldots,\Val(v_{t-1}))$ as the size of the sample $t$ grows. Our results answer Question 3 in \cite{cohen2021expander} and Questions 2 and 3 in \cite{cohen2022expander}. Moreover, we improve the bound of the main result of \cite{golowhich2022a} for bounded degree graphs.
		
		Throughout the paper, we assume that all graphs are finite and connected. We write $\mathcal{N}(\mu,\sigma^2)$ for a normal distribution with mean $\mu$ and variance $\sigma^2$, and $\phi$ for the density function of $\mathcal{N}(0,1)$. Local central limit theorems for Markov chains are known as early as the work of Kolmogorov \cite{kolmogorov1949a}, and the contributions due to Nagaev \cite{nagaev1957some} and \cite{nagaev1962more}, who initiated the study of Markov chains by spectral methods. This topic has been widely studied over the last years. Our main result gives a local central limit theorem for the random walk on expander graphs with a uniform rate of convergence.
		
		\begin{theorem}\label{theo:main}
			Let $G=(V,E)$ be a $d$-regular $\lambda$-expander graph with $\lambda<1$. Fix a balanced labelling $\operatorname{val}\colon V \longrightarrow \{0,1\}$, let $(X_i)$ be the simple random walk on $G$ with uniform initial distribution, and let $Z_t= \sum_{i=0}^{t-1} \Val(X_i)$ and $\sigma^2=\lim_{t\to \infty} \Var(Z_t)/t$. There is a constant $C_1(\lambda,d)$ depending only on $\lambda$ and $d$ such that
			\[ \left |\prob{Z_t =k} - t^{-1/2}\sigma^{-1}\phi\left (\frac{k-t/2}{t^{1/2}\sigma}\right )\right | \leq C_1(\lambda,d) \frac{1}{t} \quad \forall \, k \in \mathbb{Z} \quad \forall\, t \in \mathbb{N}. \]
		\end{theorem}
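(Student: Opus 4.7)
The plan is to follow the classical Nagaev approach: combine Fourier inversion for the lattice-valued sum $Z_t$ with an analytic perturbation analysis of a twisted transfer operator, and track constants so that every error is $O(1/t)$ uniformly in $k$.

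First I would write
\[ \prob{Z_t=k}=\frac{1}{2\pi}\int_{-\pi}^{\pi} e^{-i\theta k}\,\varphi_t(\theta)\,d\theta, \qquad \varphi_t(\theta):=\mathbb{E}\!\left[e^{i\theta Z_t}\right], \]
and observe that $\varphi_t(\theta)=u^{\top} D_\theta (PD_\theta)^{t-1}\mathbf{1}$, where $u$ is the uniform distribution and $D_\theta=\mathrm{diag}(e^{i\theta\operatorname{Val}(v)})_{v\in V}$. The key object is the twisted operator $P_\theta:=PD_\theta$, equal to the stochastic matrix $P$ at $\theta=0$; its similar symmetric form $\tilde{P}_\theta:=D_\theta^{1/2}PD_\theta^{1/2}$ is convenient for spectral estimates.

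For $|\theta|\le\theta_0(\lambda,d)$ I would apply analytic perturbation theory. Since at $\theta=0$ the eigenvalue $1$ of $P$ is simple and separated from the remaining spectrum by the gap $1-\lambda$, there is a leading eigenvalue $\mu(\theta)$ of $P_\theta$ depending holomorphically on $\theta$ on a fixed neighbourhood of $0$, together with a spectral decomposition $P_\theta^{t-1}=\mu(\theta)^{t-1}\Pi_\theta+R_\theta^{t-1}$ with $\|R_\theta^{t-1}\|\le C((1+\lambda)/2)^{t-1}$. A direct computation using $\mathbb{E}_u[\operatorname{Val}]=1/2$ and the standard Markov-chain variance identity produces
\[ \log\mu(\theta)=\tfrac{i\theta}{2}-\tfrac{\sigma^2}{2}\theta^2+O(|\theta|^3), \]
so after substitution $\varphi_t(\theta)=\mu(\theta)^t(1+O(|\theta|))+O(\rho^t)$ on $|\theta|\le\theta_0$. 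The Fourier integral of the leading Gaussian term $\mu(\theta)^t$ over $|\theta|\le\theta_0$ gives, by the usual saddle-point/Taylor estimate (using $|\log\mu(\theta)-i\theta/2+\sigma^2\theta^2/2|\lesssim|\theta|^3$), exactly $t^{-1/2}\sigma^{-1}\phi((k-t/2)/(t^{1/2}\sigma))$ with error $O(1/t)$ uniformly in $k\in\mathbb{Z}$.

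For $\theta$ away from $0$ I need a uniform bound $|\varphi_t(\theta)|\le C\rho_1^t$. Trivially $\|P_\theta\|_\infty\le 1$ since $|D_\theta|=I$; and if some eigenvalue $\mu$ of $P_\theta$ satisfies $|\mu|=1$, the inequality $|v|=|PD_\theta v|\le P|v|$ forces $|v|$ to be $P$-invariant and hence constant (by connectedness), after which $D_\theta v$ becomes a unimodular eigenvector of $P$; the expander hypothesis $\lambda<1$ (which also rules out bipartiteness, so $-1\notin\mathrm{spec}(P)$) then forces $D_\theta v$ to be constant, which together with the labelling being balanced but non-trivial forces $\theta\in 2\pi\mathbb{Z}$. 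Compactness of $\{\theta_0\le|\theta|\le\pi\}$ and continuity of the spectral radius then yield $\rho_1=\rho_1(\lambda,d,\theta_0)<1$. Combining the small- and large-$\theta$ integrals gives the claimed bound $C_1(\lambda,d)/t$.

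The main obstacle is the large-$\theta$ estimate: existence of some $\rho_1<1$ is immediate from compactness, but extracting a quantitative value depending only on $\lambda$ and $d$ (not on the graph size or the specific labelling) requires a careful argument that simultaneously exploits the spectral gap of $P$ and the non-degenerate balanced structure of $\operatorname{Val}$, and rules out near-resonances in the neighbourhood of $\theta=\pm\pi$. Once this uniform gap is secured, the rest of the proof reduces to routine Fourier calculus.
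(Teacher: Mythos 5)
Your small-$\theta$ analysis is essentially sound and parallels what the paper does implicitly: the perturbation-theoretic expansion of the leading eigenvalue of the twisted operator, with error controlled by the spectral gap $1-\lambda$ and $\|\Val\|_\infty\le 1$, is exactly the content of the quantitative bound of Mann that the paper invokes (inequality (3.33) of \cite{Mann1996Berry}) to handle $|\theta|\le\theta_0$ with an $O(1/t)$ error uniform in $k$. The gap is in the large-$\theta$ regime, and it is not a technicality: it is the entire point of the theorem. Your argument shows only that, for a \emph{fixed} graph $G$ and \emph{fixed} balanced labelling, the spectral radius of $P_\theta$ is $<1$ for $\theta\neq 0$, and then compactness in $\theta$ gives some $\rho_1<1$. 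But this $\rho_1$ (and the prefactor $C$ needed because $P_\theta$ is non-normal, so powers are not controlled by the spectral radius alone) depends on the particular graph and labelling --- in particular on $|V|$ --- whereas the statement requires $C_1(\lambda,d)$ uniform over all $d$-regular $\lambda$-expanders and all balanced labellings. You explicitly flag this as ``the main obstacle'' and then do not supply the argument, so the proof as written establishes only the graph-dependent version of the theorem, which the paper itself notes (Remark \ref{remark uniform}) is the easy statement; the uniformity in the size of the graph is what is new.

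The paper closes exactly this gap by a combinatorial, not spectral, mechanism: using the expander mixing lemma it finds $k^*$ such that a set $A_{k^*}$ (vertices labelled $0$ with exactly $k^*$ zero-labelled neighbours) has density at least $\tfrac{(1-\lambda)^2}{3(d-1)}$ (Lemma \ref{lem:A_j or B_j}); the walk performs $2$-cycles from $A_{k^*}$ a linear number of times with high probability, and the intermediate labels of these $2$-cycles form $b_t=\Theta_{\lambda,d}(t)$ i.i.d.\ Bernoulli$(\tfrac{d-k^*}{d})$ variables independent of the rest of the walk. Each such variable is $\eta$-nonlattice with $\eta$ depending only on $d$ and $\theta_0$, which yields $|\varphi_Z(\theta)|\le e^{-\eta b_t}+O(1/t)$ on $\theta_0\le|\theta|\le\pi$ with constants depending only on $(\lambda,d)$ --- precisely the quantitative non-resonance bound your compactness argument cannot deliver. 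To repair your proof you would need a substitute for this step, e.g.\ a quantitative lower bound on $1-\rho(P_\theta)$ (together with control of the powers of the non-normal $P_\theta$) that is uniform over all $d$-regular $\lambda$-expanders and all balanced labellings; nothing in your outline provides such a bound, and it is not clear that a purely spectral route gives one without reintroducing combinatorial input of the above kind.
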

		We obtain this from a general local central limit theorem for Markov chains, given in Section \ref{sec: general lclt}.
	
		\begin{remark}\label{remark uniform}
			Every $d$-regular connected graph that is not bipartite is a $\lambda$-expander for some $\lambda<1$. It follows from Theorem \ref{theo:main} that the local central limit theorem holds for any of such graphs. However, the time $t$ until $Z_t$ and $\mathcal{N}(t/2,t\sigma^2)$ are close might depend on the number of vertices of $G$. The fundamental observation in Theorem \ref{theo:main} is that the dependence on the size of the graph disappears when $G$ has an absolute spectral gap and bounded degree.
		\end{remark}
		
		Write $(U_i)$ for a sequence of independent vertices of $G$ chosen uniformly at random. We are particularly interested in the total variation distance between the Hamming weights $Z_t=\sum_{i=0}^{t-1} \Val(X_i)$ and $B_t=\sum_{i=0}^{t-1} \Val(U_i)$, denoted\footnotemark{} $\|Z_t-B_t\|_{TV}$. \footnotetext{The total variation distance is defined between measures. We abuse notation and identify $\|Z_t-B_t\|_{TV}$ with $\|\mathcal{L}(Z_t) - \mathcal{L}(B_t)\|_{TV}$, where $\mathcal{L}(\cdot)$ stands for the law of a random variable.} This distance measures the best distinguishing probability a symmetric function can achieve on $(\Val(X_i))_{i=0}^{t-1}$ and $(\Val(U_i))_{i=0}^{t-1}$ (see Proposition 4.5 in \cite{levin2017markov}).

		The asymptotic behavior of $Z_t$ is determined by Theorem \ref{theo:main}.  In fact, most of the mass of $Z_t$ is concentrated in an interval of length $\sqrt{t\log t}$ around its mean (see Theorem 2.1 in \cite{gillman1998a}). Therefore, a local central limit theorem for $Z_t$ implies convergence in total variation distance to a discretized normal distribution. Indeed, since Theorem \ref{theo:main} gives a convergence rate of $O(1/t)$, this simple argument gives a rate of convergence of $O(\sqrt{\log t}/\sqrt{t})$ for the total variation distance. A sharper analysis allows to improve this bound to $O(\log(t)^{1/4}/\sqrt{t})$. In an earlier version of this work \cite{chiclana2022av1}, we demonstrated convergence in total variation distance without bounding the rate of convergence. Subsequently, Golowich presented in \cite{golowhich2022a} a bound similar to (\ref{eq: cor 1.3}), where $\log(t)^{1/4}$ is replaced by $\log(t)^{\eta_1 \log \log t + \eta_2}$ for some constants $\eta_1$, $\eta_2$ (see (\ref{eq: golowich})).
		
		There are different natural ways of discretizing the normal distribution. For convenience, we will consider $N_d(\mu,\sigma^2)$ with probability distribution $f_{N_d(\mu,\sigma^2)}$ given by 
		\begin{equation}\label{eq:disc normal}
			f_{N_d(\mu,\sigma^2)}(k)=\frac{1}{D(\mu,\sigma^2)} \sigma^{-1}\phi\left (\frac{k-\mu}{\sigma}\right ) \quad \forall \, k \in \mathbb{Z},
		\end{equation}
		where $D(\mu,\sigma^2)=\sum_{k \in \mathbb{Z}} \sigma^{-1}\phi\left (\frac{k-\mu}{\sigma}\right )$ is a normalizing constant.
		
		\begin{corollary}\label{cor:main}
			Let $G=(V,E)$ be a $d$-regular $\lambda$-expander graph with $\lambda<1$. Fix a balanced labelling $\operatorname{val}\colon V \longrightarrow \{0,1\}$, let $(X_i)$ be the simple random walk on $G$ with uniform initial distribution, and let $Z_t= \sum_{i=0}^{t-1} \Val(X_i)$ and $\sigma^2=\lim_{t\to \infty} \Var(Z_t)/t$. There is a constant $C_2(\lambda,d)$ depending only on $\lambda$ and $d$ such that
			\begin{equation}\label{eq: cor 1.3} \left \|Z_t - N_d(t/2,t\sigma^2) \right \|_{TV}\leq C_2(\lambda,d) \frac{\log (t)^{1/4}}{\sqrt{t}} \quad \forall \, t \geq 2.
			\end{equation}
		\end{corollary}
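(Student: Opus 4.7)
The plan is to split $\mathbb{Z}$ into a bulk $I := [t/2 - L, t/2 + L]$, with $L := A\sqrt{t \log t}$ for a large constant $A = A(\lambda, d)$, and to bound the two regions separately. First I handle the normalization: comparing the Riemann sum with the Gaussian integral (or by Poisson summation) gives $D(t/2, t\sigma^2) = 1 + O(e^{-c t})$, so $f_{N_d}(k)$ can be replaced by the raw Gaussian density $q_k^\ast := (t^{1/2}\sigma)^{-1}\phi\bigl((k - t/2)/(t^{1/2}\sigma)\bigr)$ at an exponentially small cost in total variation. Setting $p_k := \prob{Z_t = k}$ and $\epsilon_k := p_k - q_k^\ast$, it then suffices to bound $\tfrac{1}{2}\sum_k |\epsilon_k|$.

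For the tails $k \notin I$, Gillman's Chernoff-type inequality \cite{gillman1998a} gives $\prob{|Z_t - t/2| > L} \leq 2\exp(-c L^2/t)$ with $c = c(\lambda) > 0$, and the standard Gaussian tail bounds the analogous probability for $N_d$. Choosing $A$ large enough makes $\sum_{k \notin I}|\epsilon_k|$ smaller than $(\log t)^{1/4}/\sqrt{t}$. In the bulk I apply Cauchy--Schwarz,
\[
\sum_{k \in I}|\epsilon_k| \;\leq\; \sqrt{2L}\left(\sum_{k \in I}\epsilon_k^2\right)^{1/2},
\]
so that the corollary reduces to the $L^2$-bound $\sum_k \epsilon_k^2 \leq C/t^{3/2}$: plugging this in with $L = A\sqrt{t\log t}$ yields a bulk contribution of order $(\log t)^{1/4}/\sqrt{t}$, matching the target.

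The main obstacle is precisely this $L^2$-bound $\sum_k \epsilon_k^2 = O(t^{-3/2})$. Merely invoking the pointwise bound $|\epsilon_k| \leq C_1/t$ of Theorem \ref{theo:main} over the effective support of width $\sqrt{t\log t}$ produces $\sum_k \epsilon_k^2 = O(\sqrt{\log t}/t^{3/2})$, which feeds back into Cauchy--Schwarz to give only the weaker rate $O(\sqrt{\log t}/\sqrt{t})$; this is the ``simple argument'' already noted in the text. To shed the extra $\sqrt{\log t}$ I would revisit the Fourier analysis underlying the proof of Theorem \ref{theo:main}: by Nagaev's perturbation of the twisted transition matrix, $\mathbb{E}[e^{i\theta Z_t}]$ agrees with $e^{i t\theta/2 - t\sigma^2 \theta^2/2}$ up to an error of order $t|\theta|^3 e^{-c t\theta^2}$ on a window around $\theta = 0$, while outside this window both quantities are exponentially damped by the spectral gap. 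Squaring and integrating on $(-\pi,\pi)$ via Parseval's identity then delivers the required estimate on $\sum_k \epsilon_k^2$, and the optimization above closes the argument.
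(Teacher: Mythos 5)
Your overall architecture is exactly the paper's: Gillman's Chernoff bound for the tails, Cauchy--Schwarz over a bulk window of width of order $\sqrt{t\log t}$, and a reduction via Parseval to an $L^2$ bound $\|\varphi_Z-\varphi_t\|_2=O(t^{-3/4})$ on characteristic functions (this is precisely Lemma \ref{lem: L2 norm of varphiZ and varphit}); your treatment of the normalizing constant and your bookkeeping of exponents are also fine. The one structural point you gloss over is that Parseval on $[-\pi,\pi]$ compares $\varphi_Z$ with the \emph{periodized} Gaussian characteristic function $F(\theta)=\sum_k\varphi_t(\theta+2\pi k)$, not with $\varphi_t$ itself; the aliasing term $\|\varphi_t-F\|_2$ must be bounded (the paper does this, getting an $O(1/(\sigma^2 t))$ contribution), but this is minor and easily repaired.

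The genuine gap is in your justification of the high-frequency part of the $L^2$ bound: you assert that for $\theta_0\le|\theta|\le\pi$ the quantity $|\varphi_Z(\theta)|$ is ``exponentially damped by the spectral gap.'' The spectral gap of $P$ does not give this. The twisted operator $P_\theta=P\,\mathrm{diag}(e^{i\theta\Val})$ has $\|P_\theta\|_{2,\pi}=1$ for every $\theta$ (take $g=e^{-i\theta\Val}$), and for a general integer-valued observable on a chain with a spectral gap the characteristic function need not decay at all away from $\theta=0$; what must be ruled out, with constants depending only on $(\lambda,d)$ and not on $|V|$, is an approximate lattice/cocycle structure of the labelling along the walk. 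This is exactly the content of the paper's Section 5: the expander-mixing-lemma combinatorics producing the sets $A_{k^*},B_{k^*}$ (Lemma \ref{lem:A_j or B_j}), the $2$-cycle decomposition $Z_t\approx S_t+Y_t$ with i.i.d.\ Bernoulli, $\eta$-nonlattice summands (Lemma \ref{lem: nonlattice}), and the coupling estimate of Lemma \ref{lem: high probability}; these combine in (\ref{eq: bound decomposition}) to give $|\varphi_Z(\theta)|\le \frac{1}{e\eta b_t}+\frac{\pi M}{t}$ uniformly on $\theta_0\le|\theta|\le\pi$ (note: only $O(1/t)$, not exponential, because of the coupling error --- which still suffices after squaring). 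Since this machinery is developed in the proof of Theorem \ref{theo:main}, your instinct to ``revisit the Fourier analysis underlying that proof'' points at the right place, but as written your argument attributes the decisive estimate to the wrong mechanism and leaves the hardest step unproved; Nagaev-type perturbation of $P_\theta$ only controls a neighborhood of $\theta=0$. Once you replace the spectral-gap claim by the nonlattice-decomposition bound above, the rest of your proposal closes the argument along the same lines as the paper.
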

	
		On the other hand, the asymptotic behavior of $(B_t)$ is well known. Indeed, $\Val(U_i)$ is a Bernoulli random variable with parameter $p=\frac{1}{2}$. Hence, $B_t$ follows a binomial $\operatorname{Bin}(t,\frac{1}{2})$. Since binomial distributions are concentrated around their means (see Lemma 8.1 in \cite{penrose2011local}), the classic local central limit theorem (see Lemma 5 in \cite{zolotukhin2018on}) implies that $B_t$ converges in total variation distance to a discretized normal distribution. More precisely,
		\[ \lim_{t \to \infty} \|B_t- N_d(t/2,t/4)\|_{TV}=0.\]
		In view of Corollary \ref{cor:main}, we deduce that
		\[
			\lim_{t \to \infty} \|B_t - Z_t\|_{TV} = \|N_d(t/2,t/4) - N_d(t/2,t\sigma^2)\|_{TV}.
		\]
		Even though both $Z_t$ and $B_t$ converge to discretized normal distributions with mean $\frac{t}{2}$, their variances may not be the same. Therefore, the ability of a random walk $(X_i)$ on an expander graph to fool all symmetric functions as $t$ grows is measured by the difference between the variances of $Z_t$ and $B_t$. This difference can be bounded using the following formula for the variance of $Z_t$, which is written in terms of the eigenvalues $\lambda_j$, their normalized eigenvectors $f_j$ (see discussion previous to (\ref{eq:spectral form Pf})), and the labelling $\Val$. We give it also for unbalanced labellings since it will be useful when extending our main results. Balanced labellings correspond to taking $\alpha=1/2$ below.
		
		\begin{proposition}\label{prop: variance}
			Let $G$ be a $d$-regular graph with $n$ vertices. Let $(X_i)$ be the simple random walk on $G$ with uniform initial distribution $\pi$, and consider $Z_t= \sum_{i=0}^{t-1} \Val(X_i)$. For a labelling $\operatorname{val}\colon V \longrightarrow \{0,1\}$ with $\mathbb{E}_\pi(\Val)=\alpha \in (0,1)$, we have
			\begin{equation}\label{eq:var formula} 
				\Var(Z_t)= \alpha(1-\alpha)t +  2\alpha^2 \sum_{k=1}^{t-1}(t-k)\sum_{j=2}^n\langle \pi_B,f_j\rangle^2\lambda^{k}_j,
			\end{equation}
			where $B=\{x \in V \colon \Val(x)=1\}$ and $\pi_B$ is the uniform distribution on $B$. In particular, if $G$ is a $\lambda$-expander then we have
			\begin{equation}\label{eq:var bound} 
				\left |\Var(Z_t)-\alpha(1-\alpha)t\right |\leq 2\alpha(1-\alpha)t\frac{\lambda}{1-\lambda}.
			\end{equation}
		\end{proposition}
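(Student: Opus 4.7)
The plan is to expand the variance using stationarity, compute the lag-$k$ covariance via the spectral decomposition of the transition operator, and use Parseval for the bound. Since the uniform measure $\pi$ is stationary, $\Val(X_i)$ is Bernoulli$(\alpha)$ for every $i$, and the joint law of $(\Val(X_i),\Val(X_{i+k}))$ depends only on $k$. Collecting terms by lag,
\[
\Var(Z_t)=\alpha(1-\alpha)t+2\sum_{k=1}^{t-1}(t-k)\,\Cov\bigl(\Val(X_0),\Val(X_k)\bigr),
\]
so it remains to compute $\Cov(\Val(X_0),\Val(X_k))=\mathbb{P}(X_0\in B,\,X_k\in B)-\alpha^2$.

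Since the simple random walk on a regular graph is reversible with respect to $\pi$, the transition matrix $P$ is self-adjoint on $L^2(\pi)$ and admits an orthonormal eigenbasis $f_1\equiv 1,f_2,\ldots,f_n$ with eigenvalues $1=\lambda_1,\lambda_2,\ldots,\lambda_n$. Expanding $\mathbb{1}_B$ in this basis gives
\[
\mathbb{P}(X_0\in B,\,X_k\in B)=\langle\mathbb{1}_B,P^k\mathbb{1}_B\rangle_\pi=\sum_{j=1}^{n}\lambda_j^k\,\langle\mathbb{1}_B,f_j\rangle_\pi^2,
\]
and the $j=1$ term equals exactly $\alpha^2$. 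The one bookkeeping step is to convert the weighted pairing $\langle\mathbb{1}_B,f_j\rangle_\pi$, in which the $f_j$ are orthonormal, into the unweighted pairing $\langle\pi_B,f_j\rangle$ that appears in the statement: because $\pi$ is uniform, $\pi(x)\mathbb{1}_B(x)=\alpha\,\pi_B(x)$, so $\langle\mathbb{1}_B,f_j\rangle_\pi=\alpha\,\langle\pi_B,f_j\rangle$. Squaring produces the $\alpha^2$ prefactor in (\ref{eq:var formula}).

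For the bound (\ref{eq:var bound}), I would combine $|\lambda_j|\leq\lambda$ for $j\geq 2$ with Parseval: applied to $\mathbb{1}_B$ in $L^2(\pi)$, Parseval gives $\sum_j\langle\mathbb{1}_B,f_j\rangle_\pi^2=\|\mathbb{1}_B\|_\pi^2=\pi(B)=\alpha$, so after subtracting the $j=1$ term and dividing by $\alpha^2$ one obtains $\sum_{j\geq 2}\langle\pi_B,f_j\rangle^2=(1-\alpha)/\alpha$. Plugging this into (\ref{eq:var formula}) together with $\sum_{k=1}^{t-1}(t-k)\lambda^k\leq t\lambda/(1-\lambda)$ yields the stated estimate. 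No single step looks like a real obstacle; the only care required is to keep track of which inner product is in play when reconciling the eigenfunction normalization with the definition of $\pi_B$.
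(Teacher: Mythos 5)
Your proposal is correct and follows essentially the same route as the paper: decompose the variance by lag using stationarity, expand the lag-$k$ covariance spectrally, and bound the spectral weights by a Parseval-type identity giving $\sum_{j\geq 2}\langle \pi_B,f_j\rangle^2=(1-\alpha)/\alpha$. The only cosmetic difference is that you compute $\langle \mathbb{1}_B,P^k\mathbb{1}_B\rangle_\pi$ directly in $L^2(\pi)$, whereas the paper conditions on $X_0\in B$ and evaluates $\langle P^k\pi_B,\Val\rangle$ via its Lemma \ref{lem:prob Xk in A}; the inner-product bookkeeping you carry out ($\langle\mathbb{1}_B,f_j\rangle_\pi=\alpha\langle\pi_B,f_j\rangle$) is exactly the conversion the paper performs there.
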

		
		The \textit{sticky random walk} with parameter $p \in (-1,1)$ is a Markov chain $(Q_i)$ on $\{0,1\}$ defined as follows. The initial state is chosen uniformly at random. At each step, the chain stays at the same state with probability $\frac{1+p}{2}$, and switches states with probability $\frac{1-p}{2}$. This simple chain can be seen as a simplified version of general random walks on expander graphs. Although a sequence of bits generated using the random walk may not fool all symmetric functions as $t$ grows, it serves as a replacement for a sample of bits obtained from the sticky random walk. The following result is an immediate consequence of Corollary \ref{cor:main} and a similar result for the sticky random walk (see Lemma \ref{lem: sticky tvd}).
		
		\begin{theorem}\label{theo: sticky}
			Let $G=(V,E)$ be a $d$-regular $\lambda$-expander graph with $\lambda<1$. Fix a balanced labelling $\Val \colon V \longrightarrow \{0,1\}$, let $(X_i)$ be the simple random walk on $G$ with uniform initial distribution, $Z_t=\sum_{i=0}^{t-1} \Val(X_i)$, and $\sigma^2=\lim_{t\to \infty} \Var(Z_t)/t$. Let $(Q_i)$ be the sticky random walk on $\{0,1\}$ with parameter $p=\frac{4\sigma^2-1}{4\sigma^2+1}$ and $R_t=\sum_{i=0}^{t-1} Q_i$. There is a constant $C_3(\lambda,d)$ depending only on $\lambda$ and $d$ such that
			\begin{equation}\label{eq: sticky} \|Z_t-R_t\|_{TV} \leq C_3(\lambda,d) \frac{\sqrt{\log t}}{\sqrt{t}} \quad \forall \, t\geq 2.
			\end{equation}
		\end{theorem}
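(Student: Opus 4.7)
The plan is to use the triangle inequality for total variation distance, interposing the discretized normal $N_d(t/2,t\sigma^2)$:
\[
\|Z_t - R_t\|_{TV} \leq \|Z_t - N_d(t/2,t\sigma^2)\|_{TV} + \|R_t - N_d(t/2,t\sigma^2)\|_{TV}.
\]
The first summand is directly bounded by $C_2(\lambda,d)\log(t)^{1/4}/\sqrt{t}$ from Corollary \ref{cor:main}. For the second summand, I would invoke the announced Lemma \ref{lem: sticky tvd}, which (by analogy with Corollary \ref{cor:main}) furnishes a quantitative bound of order $\sqrt{\log t}/\sqrt{t}$ or better for the sticky random walk. The final constant $C_3(\lambda,d)$ then absorbs $C_2(\lambda,d)$ and the constant coming from the sticky-walk lemma, and the dominant rate $\sqrt{\log t}/\sqrt{t}$ is whichever of the two contributions is worse.

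Before the triangle inequality can be applied, one must check that $t\sigma^2$ is indeed the correct asymptotic variance for the sticky walk with the prescribed parameter $p=(4\sigma^2-1)/(4\sigma^2+1)$, so that both chains are compared against the same Gaussian. The transition matrix of $(Q_i)$ has eigenvalues $1$ and $p$, and $Q_0$ is Bernoulli$(1/2)$, so a short spectral computation gives $\Cov(Q_i,Q_{i+k})=p^k/4$, whence
\[
\lim_{t\to\infty} \frac{\Var(R_t)}{t} = \frac{1}{4}\Bigl(1 + 2\sum_{k=1}^{\infty} p^k\Bigr) = \frac{1+p}{4(1-p)}.
\]
Substituting the chosen $p$ yields $1-p = 2/(4\sigma^2+1)$ and $1+p = 8\sigma^2/(4\sigma^2+1)$, so the right-hand side collapses to $\sigma^2$, exactly the asymptotic variance of $Z_t/\sqrt{t}$. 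This is the reason the parameter $p$ is defined the way it is in the statement.

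The main obstacle is not deep: it is this variance-matching identity, which is what allows a single discretized normal to serve as a common reference for both $Z_t$ and $R_t$. Once it is in place, the proof is a one-line application of the triangle inequality using Corollary \ref{cor:main} and Lemma \ref{lem: sticky tvd}, yielding the claimed bound \eqref{eq: sticky} with an explicit constant $C_3(\lambda,d)$ depending only on $\lambda$ and $d$ (since $\sigma^2$ is itself controlled by $\lambda$ via Proposition \ref{prop: variance}).
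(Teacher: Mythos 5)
Your proposal is correct and follows essentially the same route as the paper: the triangle inequality through the common discretized normal $N_d(t/2,t\sigma^2)$, with Corollary \ref{cor:main} handling $Z_t$, Lemma \ref{lem: sticky tvd} handling $R_t$, and the same variance-matching computation $\lim_{t\to\infty}\Var(R_t)/t=\frac{1+p}{4(1-p)}=\sigma^2$ justifying the choice of $p$. The only point to tighten is your parenthetical about the constant: the bound in Lemma \ref{lem: sticky tvd} is of the form $10^{12}(1-|p|)^{-8}$, and turning it into a function of $(\lambda,d)$ alone needs a positive lower bound on $\sigma^2$ that Proposition \ref{prop: variance} does not supply once $\lambda\geq 1/3$; the paper instead combines the upper bound $4\sigma^2\leq 2(1-\lambda)^{-1}$ from Proposition \ref{prop: variance} with the lower bound $\sigma\geq(1-\lambda)/(10d^{3/2})$ from (\ref{eq: bound sigma}) in the proof of Theorem \ref{theo:main} to conclude $1-|p|\geq(1-\lambda)^2/(25d^3)$.
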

		We believe that power of the logarithm in (\ref{eq: sticky}) can be improved with additional work.

		The rest of the paper is organized as follows. In Section \ref{sec:preliminaries}, we introduce notation and definitions that will be used throughout the paper. In Section \ref{sec:previous work}, we discuss significant previous work of several authors on this topic. In section \ref{sec: general lclt}, we present a general local central limit theorem for Markov chains. Our main result follows as a particular application of it. Section \ref{sec:main result} is dedicated to prove Theorem \ref{theo:main}. In Section \ref{sec:variance}, we prove Proposition \ref{prop: variance} and Corollary \ref{cor:main}. Section \ref{sec:sticky} is devoted to prove Theorem \ref{theo: sticky}. In Section \ref{sec:extension}, we extend our main results to unbalanced labellings. Finally, we dedicate Section \ref{sec:examples} to prove Example \ref{ex: K4} and Example \ref{ex: uniform}.

		\section{Previous work}\label{sec:previous work}
		In the recent paper \cite{guruswami2021pseudobinomiality}, among other results, Guruswami and Kumar showed that the total variation distance between the Hamming weight of the sticky random walk with parameter $p$ and the binomial distribution is $\Theta(p)$. As \cite{cohen2021expander} states, ``a major open problem they raise is whether the same is true for random walks on expander graphs''. This problem has been studied very recently by several authors. We discuss here the most significant advances on the matter.
		
		Cohen, Peri, and Ta-Shma present in \cite{cohen2021expander} a Fourier-Analytic approach to study random walks on expander graphs. Their main result states that the SRW on $\lambda$-expander graphs ``fools" symmetric functions for small values of $\lambda$. To be more precise, let $G$ be a $d$-regular $\lambda$-expander graph, $(X_i)$ the SRW on $G$ with uniform initial distribution $\pi$, and $(U_i)$ a sequence of independent vertices of $G$ chosen uniformly at random.  Theorem 1.1 in \cite{cohen2021expander} states that for any $t\in \mathbb{N}$, any symmetric function $f \colon\{ 0,1\}^t \longrightarrow \{0,1\}$, and any balanced labelling $\Val \colon V \longrightarrow\{0,1\}$, we have
		\[ |\mathbb{E}_\pi(f(\Val(X_0),\ldots,\Val(X_{t-1}))) - \mathbb{E}_\pi(f(\Val(U_0),\ldots,\Val(U_{t-1})))| = O(\lambda \cdot \log^{3/2}(1/\lambda)).
		\]
		This result can be rewritten in terms of the total variation distance between the Hamming weights $Z_t=\sum_{k=0}^{t-1}\Val(X_k)$ and $B_t=\sum_{k=0}^{t-1} \Val(U_k)$ as follows.
		\begin{equation}\label{eq:cohen main} 
			\| Z_t - B_t\|_{TV}= O(\lambda \cdot \log^{3/2}(1/\lambda)).
		\end{equation}
		The authors then propose several open questions. First, they ask if (\ref{eq:cohen main}) holds for unbalanced labellings. They also ask whether the above bound is sharp. These questions are addressed by Theorem 3 in \cite{cohen2022expander}. It states that for any labelling $\Val\colon V \longrightarrow\{1,-1\}$ with $\mathbb{E}_\pi(\Val)=\alpha \in (-1,1)$ and $0<\lambda<\frac{1-|\alpha|}{128e}$ we have
		\[
			\|Z_t-B_t\|_{TV}\leq \frac{124}{\sqrt{1-|\alpha|}}\lambda.
		\]
		An equivalent bound is achieved by Corollary 2 in \cite{golowich2022pseudorandomness}, which also provides interesting bounds for the tails of the distributions. Moreover, Corollary 4 in \cite{golowich2022pseudorandomness} extends these bounds from binary to arbitrary labellings. Finally, the authors show that the dependence on $\lambda$ in the above results is sharp up to a constant (see Theorem 5 in \cite{golowich2022pseudorandomness}).
		
		Second, while (\ref{eq:cohen main}) shows that the total variation distance between $Z_t$ and $B_t$ vanishes with $\lambda$, it leaves open the possibility that a better convergence exists, namely, for some fixed $\lambda$ the total variation distance goes to $0$ as the size of the sample $t$ grows. This is the case for some well-known symmetric test functions, such as $\operatorname{AND}$, $\operatorname{OR}$, and $\operatorname{PARITY}$, where the error decreases exponentially with $t$ (see \cite{cohen2021expander} for details), and $\operatorname{MAJ}$, where the error goes down polynomially with $t$ (see Theorem 4.6 in \cite{cohen2021expander}). This question is addressed by Theorem 1 in \cite{cohen2022expander}, which shows that for every $\lambda$ there is a $\lambda$-expander graph and a balanced labelling $\Val \colon V \longrightarrow \{1,-1\}$ such that
		\[ \|Z_t - B_t\|_{TV}=\Theta(\lambda) \quad \forall \, t \in \mathbb{N}.
		\]
		However, this estimate is obtained using Cayley graphs over Abelian groups, which cannot provide constant degree expanders. An open question that the authors in \cite{cohen2022expander} propose is whether a similar bound holds for constant degree graphs. They also ask about the existence of a family of expander graphs that fools all symmetric functions with error going down to zero as the length of the walk $t$ grows, independently of the chosen labelling. Corollary \ref{cor:main} answers both of these questions. It shows that the total variation distance between $Z_t$ and $B_t$ converges to zero as $t$ grows if, and only if, the limits of the variances of $t^{-1/2}Z_t$ and $t^{-1/2}B_t$ are the same. The following examples, that we justify in Section \ref{sec:examples}, show that in general this is not the case. Recall that $\Var(B_t)=t/4$.
		
		\begin{example}\label{ex: K4}
			Let $K_4$ be the complete graph with 4 nodes, let $\Val$ be any balanced labelling on $K_4$, let $(X_i)$ be the simple random walk on $K_4$ with uniform initial distribution, and consider $Z_t=\sum_{i=0}^{t-1} \Val(X_i)$. Then $K_4$ is a $3$-regular $\frac{1}{3}$-expander graph and
			\[ \Var(Z_t)= \frac{1}{8}t + O(1).\]
		\end{example}
	
		\begin{example}\label{ex: uniform}
			Let $G$ be a $d$-regular $\lambda$-expander graph with $n$ vertices and let $(X_i)$ be the simple random walk on $G$. There is a balanced labelling $\Val$ on $G$ for which $Z_t=\sum_{i=0}^{t-1} \Val(X_i)$ satisfies
			\[ \Var(Z_t) \geq \frac{t}{4} + \frac{1}{2}\left (\frac{1}{d} - \frac{3}{n-1}\right )t + O(1).\]
		\end{example}
	
		In the very recent paper \cite{golowhich2022a}, Golowich presents the first result with an explicit rate for the convergence in total variation distance of $Z_t$. For $\lambda\leq 1/100$, his main result gives universal constants $\eta_1$, $\eta_2$ such that
		\begin{equation}\label{eq: golowich} \|Z_t - \mathcal{N}^t_{\sigma^2}\|_{TV} \leq \frac{\lambda}{\sqrt{t}} (1+\log t)^{ \eta_1 \log \log t +\eta_2} \quad \forall \, t \in \mathbb{N},
		\end{equation}
		where $\mathcal{N}^t_{\sigma^2}$ is a discretized normal distribution satisfying a number of axioms (see Definition 11 in \cite{golowhich2022a}). It is asked in \cite{golowhich2022a} whether the factor $(1+\log t)^{ \eta_1 \log \log t +\eta_2}$ can be removed from the above bound. Although our approach does not allow us to completely get rid of it, Corollary \ref{cor:main} provides a better convergence with respect to $t$ for bounded degree graphs. On the other hand, (\ref{eq: golowich}) gives a better convergence with respect to $\lambda$ than Corollary \ref{cor:main}.
		
		\section{Preliminaries}\label{sec:preliminaries}
		Given two probability measures $\mu$, $\nu$ on a finite or countable set $V$, their \textit{total variation distance} is
		\[\|\mu-\nu\|_{TV} = \frac{1}{2} \sum_{v \in V} |\mu(v)-\nu(v)|.\]
		
		Let $G=(V,E)$ be a graph with $n$ vertices, where $V$ is the set of vertices and $E$ the set of edges. We say that $G$ is $d$\textit{-regular} if every vertex $v \in V$ has degree $d$. Let $(X_i)$ be the \textit{simple random walk} (SRW for short) on $G$ started at a vertex chosen uniformly at random from $V$, i.e., at every step the chain goes to an adjacent vertex chosen uniformly at random. The \textit{transition matrix} of $(X_i)$ is denoted by $P$, and its \textit{stationary distribution} by $\pi$. Notice that $\pi$ is the uniform distribution on $V$ since $G$ is regular. Denote $\langle\cdot,\cdot \rangle$ the usual inner product on $\mathbb{R}^V$ given by $\langle f,g \rangle= \sum_{x \in V} f(x)g(x)$. We will also consider the inner product $\langle \cdot, \cdot\rangle_\pi$ defined by 
		$$\langle f,g \rangle_\pi = \sum_{x \in V} f(x)g(x)\pi(x) \quad \forall \, f,g\colon V \longrightarrow \mathbb{R},$$
		which induces a norm $\|\cdot\|_{2,\pi}$.
		It is well known that $P$ is a self-adjoint stochastic matrix with respect to the inner product $\langle \cdot, \cdot\rangle_\pi$, thus it has real eigenvalues $1=\lambda_1>\lambda_2\geq\ldots\geq \lambda_n\geq -1$. Write $\lambda^*=\max\{|\lambda_j|\colon j\geq 2\}$. We say that $G$ is a $\lambda$\textit{-expander} graph if $\lambda^* \leq \lambda$. The \textit{absolute spectral gap} of the chain is $1-\lambda^*$.
		The spectral theorem applied to $P$ gives an orthonormal basis of eigenvectors $(f_j)_{j=1}^n$ with respect to $\langle \cdot, \cdot\rangle_\pi$ corresponding to the eigenvalues $(\lambda_j)_{j=1}^n$. As a consequence, for any $f \colon V \longrightarrow \mathbb{R}$ we have
		\begin{equation}\label{eq:spectral form Pf} 
			P^t f (x)= \sum_{j=1}^n \langle f,f_j \rangle_\pi f_j(x)\lambda_j^t \quad \forall \, x \in V \quad \forall \, t \in \mathbb{N}.
		\end{equation}
		We refer to Lemma 12.2 in \cite{levin2017markov} for a more detailed explanation. Let $W$ be a discrete random variable on $\mathbb{Z}$. The \textit{characteristic function} (ch.f. for short) of $W$ is $\varphi_W \colon [-\pi,\pi] \longrightarrow \mathbb{C}$ given by $\varphi_W(\theta)= \mathbb{E}(e^{i\theta W})$. If $N$ is a continuous random variable on $\mathbb{R}$, then its ch.f. is $\varphi_N \colon \mathbb{R} \longrightarrow \mathbb{\mathbb{C}}$ given by $\varphi_N(\theta)=\mathbb{E}(e^{i\theta N})$. 
		
		\section{A general local central limit theorem for Markov chains}\label{sec: general lclt}
		
		In this section, we prove a general central limit theorem for Markov chains with a spectral gap. Our main result Theorem \ref{theo:main} is obtained as an application of it.
		
		Penrose and Peres proved in \cite{penrose2011local} the following useful principle: Let $(Z_t)$ be a sequence of random variables that can be decomposed (with high probability) as $Z_t=S_t+Y_t$, where
		\begin{itemize}
			\item $S_t$ and $Y_t$ are independent;
			\item $S_t$ is a sum of independent identically distributed random variables;
			\item $(Y_t)$ satisfies the central limit theorem.
		\end{itemize}
		Then $(Z_t)$ satisfies the local central limit theorem (with unspecified rate of convergence). See Theorem 2.1 in \cite{penrose2011local} for the precise statement. 
		
		The next result is of the same nature as this principle, but it provides an explicit rate of convergence. We need some preliminary notation. Let $(X_i)$ be an irreducible and aperiodic Markov chain on a finite set $V$. Write $(\lambda_i)_{i=1}^{|V|}$ for its eigenvalues, where $\lambda_1=1$. We do not assume reversibility in the next result, so $\lambda_i$ might be a complex number. We still write $\lambda^*=\max\{|\lambda_i| \colon i \geq 2\}$ and define the absolute spectral gap of the chain as $1-\lambda^*$. Let $W$ be a random variable taking integer values and pick $\eta>0$ and $\theta_0 \in (0,\pi)$. We say that $W$ is an \textit{$\eta$-nonlattice for $\theta_0$} if $|\mathbb{E}(e^{i\theta W})|\leq 1-\eta$ for any $\theta \in \mathbb{R}$ satisfying $\theta_0\leq |\theta|\leq \pi$. Clearly, $|\mathbb{E}(e^{i0W})|=1$. If $|\mathbb{E}(e^{i\theta W})|=1$ for some $\theta\neq 0$, then the distribution of $e^{i \theta W}$ must be concentrated at some point $e^{i \theta b}$, and $\prob{ W \in b+(2\pi/ \theta)\mathbb{Z}}=1$. The parameter $\eta$ quantifies how far $W$ is from behaving like this when $|\theta|$ is bigger than $\theta_0$. Given two probability measures $\pi$, $\mu_0$ on a finite or countable set $V$, let $\frac{\pi - \mu_0}{\pi}$ be the vector with entries $\frac{\pi - \mu_0}{\pi}(x) = \frac{\pi(x)-\mu_0(x)}{\pi(x)}$ for any $x \in V$.
		
		\begin{theorem}\label{theo: local CLT Markov}
			Let $(X_i)$ be an irreducible and aperiodic Markov chain on $V$ with initial distribution $\mu_0$, stationary distribution $\pi$, and absolute spectral gap $1-\lambda>0$. Given $f \colon V \longrightarrow \mathbb{Z}$, write $Z_t = \sum_{i=0}^{t-1} f(X_i)$ and $\sigma^2=\lim_{t \to \infty} \Var(Z_t)/t$. Let $S_t$ and $Y_t$ be independent random variables so that $\mathbb{E}_{\mu_0}(|Z_t -S_t-Y_t|) \leq \frac{M}{t}$ for some $M>0$, and $S_t=\sum_{j=1}^{b_t} V_j$, where $b_t \in \mathbb{N}$ and $V_j$ are independent $\eta$-nonlattice random variables for $\theta_0>0$ satisfying
			\[
				\theta_0 \leq \frac{(1-\lambda)^2\sigma^2}{2708\|f\|_{\infty}^3}.
			\]
			Then there is a constant $C_4$ so that for any $k \in \mathbb{Z}$ and $t \in \mathbb{N}$,
			\[ \left  |\prob{Z_t=k} - \frac{1}{\sigma\sqrt{t}}\phi\left (\frac{k-t\mathbb{E}_\pi(f)}{\sigma\sqrt{t}}\right )\right | \leq\left (\pi M +\frac{1}{\theta_0 \sigma^2} + \frac{C_4\|f\|_{\infty}^3}{\sigma^4(1-\lambda)^2}\left (1+\left \|\frac{\pi-\mu}{\pi}\right \|_{2,\pi}\right ) \right ) \frac{1}{t} + \frac{1}{e\eta} \frac{1}{b_t}.
			\]
		\end{theorem}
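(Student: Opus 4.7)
My plan is to use Fourier inversion and estimate the relevant characteristic functions piecewise. Since $Z_t$ takes integer values,
\[
	\prob{Z_t=k} = \frac{1}{2\pi}\int_{-\pi}^{\pi} e^{-i\theta k}\varphi_{Z_t}(\theta)\,d\theta,
\]
while the Gaussian density at $k$ equals $\frac{1}{2\pi}\int_{\mathbb{R}} e^{-i\theta k}\varphi_{N_t}(\theta)\,d\theta$ with $\varphi_{N_t}(\theta) = \exp\bigl(i\theta t\mathbb{E}_\pi(f) - \theta^2\sigma^2 t/2\bigr)$. Subtracting these and discarding the super-exponentially small Gaussian tail on $|\theta|>\pi$, what remains is to bound $\frac{1}{2\pi}\int_{-\pi}^{\pi}|\varphi_{Z_t}(\theta) - \varphi_{N_t}(\theta)|\,d\theta$. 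I will split this integral at $|\theta|=\theta_0$ and handle each piece with a different tool, producing one summand of the target bound from each piece.

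For $\theta_0 < |\theta|\leq \pi$, the hypothesis $\mathbb{E}_{\mu_0}(|Z_t-S_t-Y_t|)\leq M/t$ combined with the elementary inequality $|e^{i\theta a}-e^{i\theta b}|\leq |\theta|\,|a-b|$ yields
\[
	|\varphi_{Z_t}(\theta) - \varphi_{S_t}(\theta)\varphi_{Y_t}(\theta)| \leq |\theta|M/t,
\]
using independence of $S_t$ and $Y_t$. The $\eta$-nonlattice property then gives $|\varphi_{S_t}(\theta)| = \prod_{j=1}^{b_t}|\varphi_{V_j}(\theta)| \leq (1-\eta)^{b_t} \leq 1/(e\eta b_t)$ uniformly on this range, producing the $\frac{1}{e\eta b_t}$ summand; the $|\theta|M/t$ contribution integrates to the $\pi M/t$ summand; and the Mills-ratio bound applied to $e^{-\theta^2\sigma^2 t/2}$ for $|\theta|>\theta_0$ produces the $\frac{1}{\theta_0\sigma^2 t}$ summand.

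The substantive work is in the region $|\theta|\leq \theta_0$, where I will invoke Nagaev's spectral perturbation method for the twisted transition operator $P_\theta(x,y)=P(x,y)e^{i\theta f(y)}$. Writing $\varphi_{Z_t}(\theta)$ as a matrix element of $P_\theta^{\,t-1}$ against $\mu_0$ and the all-ones vector, the gap $1-\lambda$ in the spectrum of $P_0=P$ ensures that for $|\theta|\leq\theta_0$ with $\theta_0 \ll (1-\lambda)/\|f\|_\infty$, the operator $P_\theta$ has a simple dominant eigenvalue $\lambda(\theta)$ analytic in $\theta$, with
\[
	\lambda(0)=1,\quad \lambda'(0)=i\mathbb{E}_\pi(f),\quad \lambda''(0)=-\sigma^2,
\]
and resolvent bounds of order $(1-\lambda)^{-1}$ on the complement of the dominant eigenspace. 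A Neumann-series expansion then provides a decomposition $\varphi_{Z_t}(\theta) = A(\theta,\mu_0)\lambda(\theta)^{t}+R(\theta,\mu_0,t)$, with $|A(\theta,\mu_0)-1|$ and $|R|$ controlled in terms of $\|f\|_\infty/(1-\lambda)$ and $\left\|(\pi-\mu_0)/\pi\right\|_{2,\pi}$, while $|\lambda(\theta)^t - \varphi_{N_t}(\theta)|=O(t|\theta|^3 e^{-c\sigma^2\theta^2 t})$ on this range. Using $\int|\theta|^j e^{-c\sigma^2\theta^2 t}\,d\theta = O(t^{-(j+1)/2})$, these contributions combine to $O\bigl(\|f\|_\infty^3 \sigma^{-4}(1-\lambda)^{-2}(1+\|(\pi-\mu_0)/\pi\|_{2,\pi})/t\bigr)$, matching the third summand.

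The main obstacle will be producing the Nagaev expansion with explicit constants and cleanly isolating the initial-distribution dependence as the factor $1+\|(\pi-\mu_0)/\pi\|_{2,\pi}$. This requires quantitative resolvent estimates for $P_\theta$ on the complement of the dominant eigenvector, a careful identification of $\lambda''(0)$ with $\sigma^2=\lim_{t\to\infty}\Var(Z_t)/t$ (which is precisely where reversibility is not needed, only the spectral gap hypothesis), and verifying that the prescribed threshold $\theta_0 \leq (1-\lambda)^2\sigma^2/(2708\|f\|_\infty^3)$ is simultaneously compatible with the small-$\theta$ perturbation regime and with the Mills-ratio estimate used in the previous paragraph.
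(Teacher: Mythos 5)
Your overall skeleton is the same as the paper's: Fourier inversion for the lattice variable $Z_t$ versus the Gaussian, a split of the frequency integral at $\theta_0$, the bound $|\varphi_{Z_t}(\theta)-\varphi_{S_t}(\theta)\varphi_{Y_t}(\theta)|\leq |\theta|\,\mathbb{E}_{\mu_0}(|Z_t-S_t-Y_t|)\leq \pi M/t$ to pass from the approximate to the exact decomposition, the nonlattice estimate $|\varphi_{S_t}(\theta)|\leq (1-\eta)^{b_t}\leq 1/(e\eta b_t)$ on $\theta_0\leq|\theta|\leq\pi$, and the Mills-ratio bound for the Gaussian there. All of that matches the paper (which first proves the exact-decomposition case and then perturbs by $\pi M/t$, exactly as you do in one step).

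The difference, and the gap, is the central region $|\theta|\leq\theta_0$. The paper does not carry out any spectral perturbation analysis: it simply invokes inequality (3.33) of Mann \cite{Mann1996Berry}, which states that for $|\theta|\leq\theta_0$,
\[
\left|\varphi_{Z_t}(\theta)-e^{-t\sigma^2\theta^2/2}\right|\;\leq\; \frac{\|f\|_\infty^3}{(1-\lambda)^2}\left(1+\left\|\frac{\pi-\mu_0}{\pi}\right\|_{2,\pi}\right)e^{-t\sigma^2\theta^2/8}\left(683\,t|\theta|^3+\frac{20}{\sigma^2}|\theta|\right),
\]
and then integrates; this cited inequality is the entire source of the third summand $C_4\|f\|_\infty^3\sigma^{-4}(1-\lambda)^{-2}\bigl(1+\|(\pi-\mu_0)/\pi\|_{2,\pi}\bigr)/t$ and of the admissible range of $\theta_0$. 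In your write-up this estimate is precisely what you defer: you name the Nagaev twisted-operator method and assert the conclusions (simple dominant eigenvalue $\lambda(\theta)$, resolvent bounds of order $(1-\lambda)^{-1}$, $|\lambda(\theta)^t-\varphi_{N_t}(\theta)|=O(t|\theta|^3e^{-c\sigma^2\theta^2 t})$, and the isolation of the factor $1+\|(\pi-\mu_0)/\pi\|_{2,\pi}$), but you do not prove any of it, and you yourself flag it as the main obstacle. Since all of the quantitative content of the theorem's third term, the explicit constant $C_4$, and the compatibility of the threshold $\theta_0\leq(1-\lambda)^2\sigma^2/(2708\|f\|_\infty^3)$ with the perturbative regime live in exactly this step, the proposal as written has a genuine hole there; it would be closed either by citing Mann's inequality as the paper does, or by actually executing the perturbation argument with explicit constants (which amounts to reproving Mann's result). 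Two smaller points: $\lambda''(0)=-\sigma^2$ only after centering $f$ (in general $\lambda''(0)=-\sigma^2-(\mathbb{E}_\pi f)^2$); the paper avoids this by reducing to $\mathbb{E}_\pi(f)=0$ first and then shifting, noting that the lattice inversion formula applies on the shifted lattice $-t\mathbb{E}_\pi(f)+\mathbb{Z}$. Also, the discarded Gaussian frequencies $|\theta|>\pi$ are not free: the paper bounds them by $2/(\pi\sigma^2 t)$ and absorbs this into the $1/(\theta_0\sigma^2 t)$-type term, so they must be accounted for rather than dismissed.
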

	
		Although the proof does not optimize the constant, it shows that we can take $C_4=6983$. We prove Theorem \ref{theo: local CLT Markov} first in the special case where $Z_t=S_t+Y_t$, where one can take $M=0$, and then in full generality.
		
		\begin{lemma}\label{lem: Z_n=S_n+Y_n}
			Theorem \ref{theo: local CLT Markov} holds in the case where $Z_t=S_t+Y_t$.
		\end{lemma}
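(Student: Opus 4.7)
The plan is to prove Lemma \ref{lem: Z_n=S_n+Y_n} by the classical Fourier inversion method. Since $Z_t$ is integer-valued, Fourier inversion on the circle gives $\prob{Z_t = k} = \frac{1}{2\pi}\int_{-\pi}^{\pi} e^{-i\theta k}\varphi_{Z_t}(\theta)\, d\theta$, while the Gaussian density at $k$ is $\frac{1}{2\pi}\int_{\mathbb{R}} e^{-i\theta k}\varphi_N(\theta)\, d\theta$ for $N \sim \mathcal{N}(t\mathbb{E}_\pi(f),\, t\sigma^2)$. After subtracting and applying the triangle inequality, I would split the frequency axis at $\theta_0$ into three regions: (i) the core $|\theta|\leq \theta_0$, where the two characteristic functions are compared directly; (ii) the annulus $\theta_0 \leq |\theta| \leq \pi$, contributing only $\varphi_{Z_t}$; and (iii) the Gaussian tail $|\theta| \geq \theta_0$, contributing only $\varphi_N$. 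Each region produces exactly one of the three summands in the target bound.

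The two easier pieces go first. For region (ii), the crucial input is the decomposition $Z_t = S_t + Y_t$ with $S_t$ and $Y_t$ independent: this gives
\[ |\varphi_{Z_t}(\theta)| \leq |\varphi_{S_t}(\theta)| = \prod_{j=1}^{b_t}|\mathbb{E}(e^{i\theta V_j})| \leq (1-\eta)^{b_t} \]
on this band by the $\eta$-nonlattice hypothesis; the elementary estimate $(1-\eta)^{b_t} \leq e^{-\eta b_t} \leq 1/(e\eta b_t)$ (using $xe^{-x}\leq 1/e$) then yields the $1/(e\eta b_t)$ summand after integration and division by $2\pi$. For region (iii), $|\varphi_N(\theta)| = e^{-\theta^2 t \sigma^2 / 2}$ and a Mills-type inequality gives $\int_{\theta_0}^{\infty} e^{-\theta^2 t \sigma^2/2}\, d\theta \leq \frac{1}{\theta_0 t \sigma^2}$, producing the $\frac{1}{\theta_0\sigma^2}\cdot\frac{1}{t}$ summand.

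The main obstacle is region (i), which I would treat by Nagaev's spectral perturbation method rather than by the $S_t + Y_t$ decomposition. Let $D_\theta$ be the diagonal operator with entries $e^{i\theta f(x)}$ and introduce the twisted transition matrix $P_\theta = PD_\theta$, with entries $P(x,y)e^{i\theta f(y)}$; an elementary induction identifies $\varphi_{Z_t}(\theta)$ with $\langle \mu_0 D_\theta,\, P_\theta^{t-1}\mathbf{1}\rangle$. Because $P$ has absolute spectral gap $1-\lambda$, analytic perturbation theory provides, for $|\theta|$ below a threshold of order $(1-\lambda)/\|f\|_\infty$, a simple leading eigenvalue $\lambda_1(\theta)$ of $P_\theta$ with $\lambda_1(0)=1$, a rank-one spectral projector $\Pi_\theta$, and a residual semigroup of operator norm at most $(1+\lambda)/2$. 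The expansion
\[ \lambda_1(\theta) = 1 + i\theta\,\mathbb{E}_\pi(f) - \tfrac{\sigma^2}{2}\theta^2 + O\!\left(\|f\|_\infty^3 \theta^3/(1-\lambda)^2\right) \]
(which reconfirms the identification of $\sigma^2$ with the asymptotic variance) then yields the multiplicative comparison
\[ \varphi_{Z_t}(\theta) = e^{i\theta t\mathbb{E}_\pi(f) - t\theta^2\sigma^2/2}\bigl(1 + E(\theta,t)\bigr) + O\bigl(((1+\lambda)/2)^{t-1}\bigr), \]
with $|E(\theta,t)| \lesssim \|f\|_\infty^3\, t|\theta|^3/(1-\lambda)^2 + \|f\|_\infty|\theta|\cdot\|(\pi-\mu_0)/\pi\|_{2,\pi}/(1-\lambda)$. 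Multiplying by $e^{-t\theta^2\sigma^2/2}$ and integrating, the cubic contribution gives $\int_{\mathbb{R}} t|\theta|^3 e^{-t\theta^2\sigma^2/2}\, d\theta = O(1/(t\sigma^4))$ and its linear analogue $O(1/(t\sigma^2))$, which combine to produce the remaining $\frac{C_4\|f\|_\infty^3}{\sigma^4(1-\lambda)^2}\bigl(1 + \|(\pi-\mu_0)/\pi\|_{2,\pi}\bigr)\frac{1}{t}$ contribution. The hypothesis $\theta_0 \leq (1-\lambda)^2\sigma^2/(2708\|f\|_\infty^3)$ is calibrated precisely so that the perturbation expansion is valid on the entire disk $|\theta|\leq \theta_0$ and the cubic control of $\lambda_1(\theta)$ translates into an error linear in $t$. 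The main difficulty I expect is the explicit bookkeeping of these constants---especially the initial-distribution factor arising from $\langle \mu_0 D_\theta,\, \Pi_\theta\mathbf{1}\rangle$---since the statement is fully quantitative.
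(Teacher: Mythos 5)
Your outline is correct and, on two of the three regions, coincides with the paper's argument: the inversion formula plus a split at $\theta_0$, the bound $|\varphi_{Z_t}(\theta)|\leq(1-\eta)^{b_t}\leq 1/(e\eta b_t)$ on $\theta_0\leq|\theta|\leq\pi$ coming from the independence of $S_t$ and $Y_t$ together with the $\eta$-nonlattice hypothesis, and a Mills-type estimate for the Gaussian tail (the paper splits this tail at $\pi$ and gets $2/(\theta_0\sigma^2 t)+2/(\pi\sigma^2 t)$, while you merge the two pieces; this is only organizational). The genuine difference is the core region $|\theta|\leq\theta_0$: the paper does not perform any spectral perturbation analysis there, but simply invokes the quantitative bound (3.33) of \cite{Mann1996Berry}, namely $|\varphi_{Z_t}(\theta)-e^{-t\sigma^2\theta^2/2}|\leq c\,e^{-t\sigma^2\theta^2/8}\bigl(683\,t|\theta|^3+\tfrac{20}{\sigma^2}|\theta|\bigr)$ with $c=(1-\lambda)^{-2}\|f\|_\infty^3\bigl(1+\|\tfrac{\pi-\mu}{\pi}\|_{2,\pi}\bigr)$, and integrates it; you instead propose to re-derive such a bound from scratch via the Nagaev twisted-operator expansion. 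Your route is more self-contained and explains where the threshold $\theta_0\lesssim(1-\lambda)^2\sigma^2/\|f\|_\infty^3$ comes from, but be aware that turning it into the fully explicit numerical form required by the statement (the constants $683$, $20$, $2708$, the damping $e^{-t\sigma^2\theta^2/8}$ obtained after absorbing the exponential of the cubic error, and the exact way the initial-distribution factor $\|\tfrac{\pi-\mu_0}{\pi}\|_{2,\pi}$ enters) is precisely the content of Mann's inequality, i.e.\ you would be reproving a nontrivial cited result rather than shortening the proof; the paper's citation buys all of this bookkeeping for free.

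Two small repairs to your sketch. First, as written your expansion of $\lambda_1(\theta)$ is really the expansion of $\log\lambda_1(\theta)$ (or of the eigenvalue for the centered function $f-\mathbb{E}_\pi(f)$); for $\mathbb{E}_\pi(f)\neq 0$ the quadratic coefficient of $\lambda_1(\theta)$ itself is $-\tfrac{1}{2}\bigl(\sigma^2+\mathbb{E}_\pi(f)^2\bigr)$, so either take logarithms or center $f$ first, as the paper does in its final paragraph (where it also notes that the discrete inversion formula remains valid on the shifted lattice $-t\mathbb{E}_\pi(f)+\mathbb{Z}$). Second, the additive residual $O\bigl(((1+\lambda)/2)^{t-1}\bigr)$ from the complementary spectral projection must be integrated over $|\theta|\leq\theta_0$ and absorbed into the $O(1/t)$ term, which costs a factor of order $\sup_t t\,((1+\lambda)/2)^{t-1}=O\bigl((1-\lambda)^{-1}\bigr)$; this is harmless but has to be accounted for in the constant $C_4$. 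With these points addressed, your argument does establish the lemma.
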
 
		
		\begin{proof}
			First, assume that $\mathbb{E}_\pi(f)=0$. Fix $t \in \mathbb{N}$ and let $\varphi_{t\sigma^2}$ be the ch.f. of a normal $\mathcal{N}(0,t\sigma^2)$, that is, $\varphi_{t\sigma^2}(\theta ) = e^{-t\sigma^2\theta^2/2}$. Also, let $\varphi_Z$, $\varphi_S$, $\varphi_Y$, and $\varphi_{V_j}$ be the ch.f.'s of $Z_t$, $S_t$, $Y_t$, and $V_j$, respectively. The inversion formula (Theorem 3.3.14 in \cite{durret2019probability}) gives
			\[  \frac{1}{\sigma\sqrt{t}}\phi\left (\frac{y}{\sigma\sqrt{t}}\right ) = \frac{1}{2\pi} \int_{\mathbb{R}} e^{-i\theta y} \varphi_{t\sigma^2}(\theta ) \, d\theta . \quad \forall \, y \in \mathbb{R}. \]
			Similarly, the inverse formula for discrete variables (Exercise 3.3.2 in \cite{durret2019probability}) gives
			\[ \prob{Z_t=k} = \frac{1}{2\pi} \int_{-\pi}^{\pi} e^{-i\theta k} \varphi_Z(\theta ) \, d\theta  \quad \forall \, k \in \mathbb{Z}. \]
			Consequently, for any $k \in \mathbb{Z}$ we have
			\begin{equation}\label{eq: bound density with chf}
				\left |\prob{Z_t=k} -  \frac{1}{\sigma\sqrt{t}}\phi\left (\frac{k}{\sigma\sqrt{t}}\right )\right | \leq \frac{1}{2\pi} \left |\int_{-\pi}^{\pi} e^{-i\theta k} (\varphi_Z(\theta )-\varphi_{t\sigma^2}(\theta ))\, d\theta  \right | + \frac{1}{2\pi} \left |\int_{|\theta|> \pi} e^{-i\theta k} \varphi_{t\sigma^2}(\theta )\, d\theta  \right |.
			\end{equation}
			It is not difficult to bound the second term in the left hand side of (\ref{eq: bound density with chf}). Observe that
			\begin{align}\label{eq: main theorem bound 1}
				\left |\int_{|\theta |> \pi} e^{-i\theta k} \varphi_{t\sigma^2}(\theta )\, d\theta  \right | & \leq \int_{|\theta |>\pi} |\varphi_{t\sigma^2}(\theta )| \, d\theta  = 2\int_{\pi}^{\infty} e^{-t\sigma^2\theta^2/2}\, d\theta = 2 \frac{1}{\sigma\sqrt{t}} \int_{\pi\sigma \sqrt{t} }^\infty e^{-x^2/2} \, dx\\
				& \leq 2 \frac{1}{\sigma\sqrt{t}} \int_{\pi\sigma \sqrt{t} }^\infty \frac{x}{\pi\sigma\sqrt{t}}e^{-x^2/2} \, dx = \frac{2}{\pi t\sigma^2} e^{-\pi^2 t\sigma^2} \leq \frac{2}{\pi\sigma^2} \frac{1}{t}.\nonumber
			\end{align}
			To bound the remaining term, we break it into two parts using $\theta_0$. Write $c=(1-\lambda)^{-2}\|f\|_{\infty}^3(1+\|\frac{\pi-\mu}{\pi}\|_{2,\pi})$. For any $\theta  \in [-\theta_0,\theta_0]$, inequality (3.33) in \cite{Mann1996Berry} gives
			\[ \left | \varphi_Z(\theta)-e^{\frac{-t\sigma^2\theta^2}{2}}\right | \leq c e^{-t\sigma^2\theta^2/8}\left (683t|\theta|^3 + \frac{20}{\sigma^2}|\theta| \right ).
			\]
			Therefore, 
			\begin{align}\label{eq: main theorem bound 2}
				\left |\int_{-\theta_0}^{\theta_0} e^{-i\theta k} (\varphi_Z(\theta)-\varphi_{t\sigma^2}(\theta))\, d\theta \right |&\leq \int_{-\theta_0}^{\theta_0} |\varphi_Z(\theta)-\varphi_{t\sigma^2}(\theta)| \, d\theta \leq 2c \int_0^{\theta_0} e^{-t\sigma^2\theta^2/8}\left (683t\theta^3 + \frac{20}{\sigma^2}\theta \right ) \, d\theta \\ \nonumber
				&= 2c \frac{2}{\sigma \sqrt{t}} \int_0^{\theta_0 \sigma \sqrt{t}/2} e^{-x^2/2} \left (\frac{683 t \cdot 8}{\sigma^3 t^{3/2}} x^3 + \frac{20\cdot 2}{\sigma^3 \sqrt{t}} x\right ) \, dx\\ \nonumber
				&\leq \frac{4c}{\sigma^4 t} \left (5464 \int_0^\infty x^3e^{-x^2/2} \,dx + 40 \int_{0}^{\infty} xe^{-x^2/2}\, dx\right )\\ \nonumber
				&= \frac{43872c}{\sigma^4}  \frac{1}{t}. \nonumber
			\end{align}
			It remains to study the case $\theta_0<|\theta| \leq \pi$. Observe that
			\[\left |\int_{\theta_0\leq |\theta|\leq \pi} e^{-i\theta k} (\varphi_Z(\theta)-\varphi_{t\sigma^2}(\theta))\, d\theta \right | \leq \int_{\theta_0\leq |\theta|\leq \pi} |\varphi_Z(\theta)|\, d\theta + \int_{\theta_0\leq |\theta|\leq \pi} |\varphi_{t\sigma^2}(\theta)|\, d\theta.\]
			We know that $\varphi_{t\sigma^2}(\theta)= e^{-t\sigma^2 \theta^2/2}$, so we have
			\begin{equation}\label{eq: main theorem bound 3} \int_{\theta_0\leq |\theta|\leq \pi} |\varphi_{t\sigma^2}(\theta)|\, d\theta=2\int_{\theta_0}^{\pi} e^{-t \sigma^2 \theta^2/2} \, d\theta = \frac{2}{\sigma \sqrt{t}} \int_{\theta_0\sigma \sqrt{t}}^{\pi \sigma \sqrt{t}} e^{-\theta^2/2} \, d\theta \leq \frac{2}{\sigma\sqrt{t}} \frac{1}{\theta_0\sigma  \sqrt{t}} = \frac{2}{\theta_0\sigma^2 } \frac{1}{t}.
			\end{equation}

			Finally, recall that $Z_t=S_t+Y_t$ with $S_t = \sum_{j=1}^{b_t} V_j$, where $b_t \in \mathbb{N}$ and $V_j$ are independent and $\eta$-nonlattice for $\theta_0$. For $\theta$ with $\theta_0\leq |\theta|\leq \pi$ we have
			\begin{equation}\label{eq: bound decomposition} |\varphi_Z(\theta)|= |\varphi_S(\theta) \varphi_Y(\theta)|\leq|\varphi_S(\theta)|= \prod_{j=1}^{b_t}|\varphi_{V_j}(\theta)|=\prod_{j=1}^{b_t} |\mathbb{E}(e^{i\theta V_j})|\leq (1-\eta)^{b_t} \leq e^{-\eta b_t} \leq \frac{1}{e\eta b_t},
			\end{equation}
			where the last inequality holds since the function $f(x)=xe^{-\eta x}$ attains its maximum at $\eta^{-1}$. Therefore,
			\begin{equation}\label{eq: main theorem bound 4}\int_{\theta_0\leq |\theta|\leq \pi} |\varphi_{Z}(\theta)|\, d\theta \leq 2\int_{\theta_0}^{\pi} \frac{1}{e\eta b_t} \, d\theta \leq \frac{2\pi}{e \eta} \frac{1}{b_t}.
			\end{equation}
			In view of the bounds obtained in (\ref{eq: main theorem bound 1}), (\ref{eq: main theorem bound 2}), (\ref{eq: main theorem bound 3}), and (\ref{eq: main theorem bound 4}), we conclude that
			\[ \left |\prob{Z_t=k} -  \frac{1}{\sigma\sqrt{t}}\phi\left (\frac{k}{\sigma\sqrt{t}}\right )\right | \leq \left (\frac{1}{\pi^2\sigma^2 }  + \frac{21936c}{ \pi\sigma^4}  +\frac{1}{\pi\theta_0\sigma^2 }\right )\frac{1}{t} + \frac{1}{e\eta} \frac{1}{b_t}\quad  \forall \, k \in \mathbb{Z}.\]
			After substituting the value of $c$ and some straightforward computations, we obtain
			\[ \left  |\prob{Z_t=k} - \frac{1}{\sigma\sqrt{t}}\phi\left (\frac{k}{\sigma\sqrt{t}}\right )\right | \leq \left (\frac{1}{\theta_0 \sigma^2} + \frac{6983\|f\|_{\infty}^3}{\sigma^4(1-\lambda)^2}\left (1+\left \|\frac{\pi-\mu}{\pi}\right \|_{2,\pi}\right ) \right ) \frac{1}{t} + \frac{1}{e\eta} \frac{1}{b_t}.
			\]
			
			Consider now a general function $f\colon V \longrightarrow \mathbb{Z}$ and define $f_0=f-\mathbb{E}_\pi(f)$ and $Z_t^0=\sum_{i=0}^{t-1} f_0(X_i)$. Although $f_0$ might not be an integer-valued function, it takes values on $-\mathbb{E}_\pi(f)+\mathbb{Z}$. Also, $Z_t^0$ takes values on $-t\mathbb{E}_\pi(f) + \mathbb{Z}$ since
			\[ Z_t^0 = \sum_{i=0}^{t-1} f_0(X_i) = Z_t - t\mathbb{E}_\pi(f).\]
			In the previous argument, we only used that $Z_t$ takes integer values when applying the inverse formula, which is also valid for $-\mathbb{E}_\pi(f)-\mathbb{Z}$ (see Exercise 3.3.2 in \cite{durret2019probability}). Moreover, we can write $Z_t^0=S_t + Y_t^0$, where $Y_t^0= Y_t - t\mathbb{E}\pi(f)$. Therefore, the above argument applied to $Z_t^0$ gives 
			\[ \left  |\prob{Z_t^0=k-t\mathbb{E}_\pi(f)} - \frac{1}{\sigma\sqrt{t}}\phi\left (\frac{k-t\mathbb{E}_\pi(f)}{\sigma\sqrt{t}}\right )\right | \leq \left (\frac{1}{\theta_0 \sigma^2} + \frac{6983\|f\|_{\infty}^3}{\sigma^4(1-\lambda)^2}\left (1+\left \|\frac{\pi-\mu}{\pi}\right \|_{2,\pi}\right ) \right ) \frac{1}{t} + \frac{1}{e\eta} \frac{1}{b_t}.
			\]
			The result follows from $\prob{Z_t^0 = k-t\mathbb{E}_\pi(f)}=\prob{Z_t=k}$.
		\end{proof}
		
		\begin{proof}[Proof of Theorem \ref{theo: local CLT Markov}]
			Fix $t\ \in \mathbb{N}$, define $Z'_t=S_t+Y_t$, and denote its ch.f. by $\varphi_{Z'}$. Notice that the hypothesis $Z_t=S_t+Y_t$ in the proof of Lemma \ref{lem: Z_n=S_n+Y_n} was exclusively used to prove (\ref{eq: bound decomposition}), which is clearly true for $\varphi_{Z'}$. Therefore, it is enough to bound $|\varphi_Z(\theta) - \varphi_{Z'}(\theta)|$ for $\theta\in [-\pi,\pi]$. First, recall that by the mean value theorem we have $|e^{ix}-e^{iy}|\leq |x-y|$ for $x$, $y \in \mathbb{R}$. Consequently,
			\begin{align*}
				|\varphi_Z(\theta) - \varphi_{Z'}(\theta) | &= |\mathbb{E}_{\mu_0}(e^{i\theta Z_t}) - \mathbb{E}_{\mu_0}(e^{i\theta Z'_t})| \leq \mathbb{E}_{\mu_0}(|e^{i\theta Z_t}-e^{i\theta Z'_t}|) \leq |\theta| \mathbb{E}_{\mu_0}(|Z_t-Z'_t|) \leq \pi \mathbb{E}_{\mu_0}(|Z_t-Z'_t|),
			\end{align*}
			for any $\theta \in [-\pi,\pi]$. Thus, the hypothesis on $\mathbb{E}_{\mu_0}(|Z_t-Z'_t|)$ and (\ref{eq: bound decomposition}) give
			\begin{equation}\label{eq: bound decomposition 2} |\varphi_Z(\theta)| \leq |\varphi_{Z'}(\theta)| + |\varphi_Z(\theta) - \varphi_{Z'}(\theta)| \leq \frac{1}{e\eta b_t} + \frac{\pi M}{t} \quad \forall \, \theta_0\leq |\theta|\leq \pi.
			\end{equation}
			To conclude the proof we just need to repeat the argument in the proof of Lemma \ref{lem: Z_n=S_n+Y_n}, using (\ref{eq: bound decomposition 2}) instead of (\ref{eq: bound decomposition}).
		\end{proof}

		\section{Proof of Theorem \ref{theo:main}}\label{sec:main result}

		Let $G$ be a $d$-regular $\lambda$-expander graph. Let $(X_i)$ be the SRW on $G$ and write $Z_t=\sum_{i=0}^{t-1} \Val(X_i)$, where $\Val$ is a balanced labelling on $G$. We need some preliminary results to define the random variables $S_t$ and $Y_t$ that we use to decompose $Z_t$. The next lemma is a tighter version of the classic expander mixing lemma.
		
		\begin{lemma}[Lemma 4.15 in \cite{vadhan2009expander}]\label{lem:expander mixing lemma}
			Let $G$ be a $d$-regular $\lambda$-expander graph with $n$ vertices. For any subsets $F_1$, $F_2$ of $V$ we have
			\begin{equation}\label{eq: expander mixing lemma} 
				\left ||E(F_1,F_2)|-\frac{d}{n}|F_1||F_2|\right |\leq \lambda d \sqrt{\left (|F_1|-\frac{|F_1|^2}{n}\right )\left (|F_2|-\frac{|F_2|^2}{n}\right )},
			\end{equation}
			where $|E(F_1,F_2)|=\{(x,y) \in F_1 \times F_2 \colon \{x,y\} \in E\}|$ is the number of edges connecting $F_1$ and $F_2$ (counting edges contained in the intersection of $F_1$ and $F_2$ twice).
		\end{lemma}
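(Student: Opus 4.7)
The plan is to prove the tighter expander mixing lemma by a clean spectral decomposition. Let $P=A/d$ be the transition matrix of the simple random walk, so that it is self-adjoint with respect to the inner product $\langle\cdot,\cdot\rangle_\pi$ defined in the preliminaries, with eigenvalues $1=\lambda_1>\lambda_2\geq\cdots\geq\lambda_n$ satisfying $\max_{j\geq 2}|\lambda_j|\leq\lambda$. Writing $u_i=\mathbf{1}_{F_i}$ for the indicator functions, I would first observe
\[
|E(F_1,F_2)| \;=\; \sum_{x,y\in V} u_1(x)\,A(x,y)\,u_2(y) \;=\; d\sum_{x\in V} u_1(x)(Pu_2)(x) \;=\; dn\,\langle u_1,Pu_2\rangle_\pi,
\]
so the problem reduces to analyzing this one matrix element of $P$.

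Next, I would decompose each indicator into its component along the all-ones function $\mathbf{1}$ and its orthogonal complement in $\langle\cdot,\cdot\rangle_\pi$. Writing $u_i=\alpha_i\mathbf{1}+g_i$ with $g_i\perp_\pi\mathbf{1}$, one computes $\alpha_i=\langle u_i,\mathbf{1}\rangle_\pi=|F_i|/n$ and
\[
\|g_i\|_{2,\pi}^2 \;=\; \|u_i\|_{2,\pi}^2-\alpha_i^2 \;=\; \frac{|F_i|}{n}-\frac{|F_i|^2}{n^2} \;=\; \frac{1}{n}\left(|F_i|-\frac{|F_i|^2}{n}\right).
\]
Because $P\mathbf{1}=\mathbf{1}$ and $\mathbf{1}\perp_\pi g_i$, the main term separates cleanly:
\[
\langle u_1,Pu_2\rangle_\pi \;=\; \alpha_1\alpha_2 + \langle g_1,Pg_2\rangle_\pi \;=\; \frac{|F_1||F_2|}{n^2} + \langle g_1,Pg_2\rangle_\pi,
\]
so that multiplying by $dn$ recovers $d|F_1||F_2|/n$ plus an error term $dn\,\langle g_1,Pg_2\rangle_\pi$.

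The final step is to control the error. Since $g_1,g_2$ lie in the orthogonal complement of $\mathbf{1}$, the operator norm of $P$ restricted to this subspace is at most $\lambda$, so by Cauchy--Schwarz
\[
|\langle g_1,Pg_2\rangle_\pi| \;\leq\; \lambda\,\|g_1\|_{2,\pi}\,\|g_2\|_{2,\pi} \;=\; \frac{\lambda}{n}\sqrt{\left(|F_1|-\tfrac{|F_1|^2}{n}\right)\!\left(|F_2|-\tfrac{|F_2|^2}{n}\right)}.
\]
Multiplying by $dn$ gives exactly the right-hand side of \eqref{eq: expander mixing lemma}. There is no real obstacle here; the only mildly subtle point is keeping track of the factors of $n$ introduced by working with $\langle\cdot,\cdot\rangle_\pi$ rather than the unweighted inner product, and of the factor $d$ relating $A$ and $P$. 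The tightness of the bound, compared with the classical version using $\sqrt{|F_1||F_2|}$, is gained for free from the exact formula $\|g_i\|_{2,\pi}^2=|F_i|/n-|F_i|^2/n^2$, which already includes the quadratic correction.
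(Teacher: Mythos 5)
Your proof is correct: the decomposition $u_i=\alpha_i\mathbf{1}+g_i$ with $\alpha_i=|F_i|/n$, the exact computation $\|g_i\|_{2,\pi}^2=\frac{1}{n}\bigl(|F_i|-\tfrac{|F_i|^2}{n}\bigr)$, and the bound $|\langle g_1,Pg_2\rangle_\pi|\leq\lambda\|g_1\|_{2,\pi}\|g_2\|_{2,\pi}$ on the orthogonal complement of $\mathbf{1}$ fit together exactly as claimed, and the bookkeeping of the factors $d$ and $n$ is right. The paper itself gives no proof of this lemma (it is quoted as Lemma 4.15 from Vadhan's survey), and your argument is essentially the same standard spectral-decomposition proof found in that reference, so there is nothing to add.
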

		
		We will need lower bounds for $|E(F_1,F_2)|$ when $F_1$ is a subset of $V$ and $F_2$ is its complement.
		
		\begin{corollary}\label{cor:expander mixing lemma}
			Let $G$ be a $d$-regular $\lambda$-expander graph. For any $F_1\subseteq V$ and $F_2= \stcomp{F_1}$ we have
			\[  |E(F_1,F_2)|\geq \frac{1}{2}(1-\lambda)d \min\{|F_1|,|F_2|\}.\]
		\end{corollary}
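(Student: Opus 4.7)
The plan is to apply Lemma \ref{lem:expander mixing lemma} directly with $F_2 = \stcomp{F_1}$ and simplify the right-hand side using the complementary relationship $|F_2| = n - |F_1|$.

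First, I would note the algebraic identity
\[ |F_1| - \frac{|F_1|^2}{n} = \frac{|F_1|(n-|F_1|)}{n} = \frac{|F_1||F_2|}{n}, \]
and symmetrically $|F_2| - \frac{|F_2|^2}{n} = \frac{|F_1||F_2|}{n}$. Consequently the square root in (\ref{eq: expander mixing lemma}) collapses to $\frac{|F_1||F_2|}{n}$, and the lemma yields
\[ |E(F_1,F_2)| \geq \frac{d}{n}|F_1||F_2| - \lambda d \cdot \frac{|F_1||F_2|}{n} = \frac{(1-\lambda)d|F_1||F_2|}{n}. \]

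Finally I would bound the last expression from below by $\frac{1}{2}(1-\lambda)d \min\{|F_1|,|F_2|\}$. Assume without loss of generality that $|F_1|\leq |F_2|$, so that $|F_2| \geq n/2$. Then $\frac{|F_1||F_2|}{n} \geq \frac{|F_1|}{2} = \frac{1}{2}\min\{|F_1|,|F_2|\}$, and the corollary follows.

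There is no real obstacle here — the result is essentially a one-line consequence of Lemma \ref{lem:expander mixing lemma} together with the observation that for complementary sets the ``variance-like'' terms $|F_i|-|F_i|^2/n$ are equal and factor as $|F_1||F_2|/n$, and the trivial fact that the larger of two complementary subsets has size at least $n/2$.
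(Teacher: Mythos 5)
Your proposal is correct and follows essentially the same route as the paper: both use that for complementary sets the terms $|F_i|-|F_i|^2/n$ coincide and equal $|F_1||F_2|/n$, apply Lemma \ref{lem:expander mixing lemma} to get $|E(F_1,F_2)|\geq (1-\lambda)\frac{d}{n}|F_1||F_2|$, and finish with the observation that $\frac{|F_1||F_2|}{n}\geq \frac{1}{2}\min\{|F_1|,|F_2|\}$ since the larger complementary set has size at least $n/2$. No gaps.
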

		\begin{proof}
			Write $n=|V|$. Since $F_2=\stcomp{F_1}$, we have $|F_1|-\frac{|F_1|^2}{n}=|F_2|-\frac{|F_2|^2}{n}=\frac{|F_1||F_2|}{n}$. Thus, the right hand side in (\ref{eq: expander mixing lemma}) is $\lambda\frac{d}{n}|F_1||F_2|$. The result follows from the fact that $\frac{x(x-n)}{n}\geq \frac{x}{2}$ for every $x\in\left [0,\frac{n}{2}\right ]$.
		\end{proof}
	
		Given a labelling $\operatorname{val}\colon V \longrightarrow \{0,1\}$ on $G$, let $A=\{x \in V \colon \Val(x)=0\}$ and $B=\{x \in V \colon \Val(x)=1\}$. The labelling $\Val$ is balanced when $|A|=|B|=\frac{|V|}{2}$. Given $x \in V$, write $q(x)$ for the number of neighbors $y$ of $x$ with $\Val(y)=0$. Define the sets
		\[ A_j = \{x\in A\colon q(x)=j\} \quad \mbox{and} \quad B_j=\{x \in B \colon q(x)=j\} \quad \forall \, j \in \{0,\ldots,d\}.\]
		The next lemma shows that for some $k^* \in \{1,\ldots,d-1\}$ either the set $A_{k^*}$ or $B_{k^*}$ is relatively large.
		\begin{lemma}\label{lem:A_j or B_j}
			Let $G$ be a $d$-regular $\lambda$-expander graph and consider a balanced labelling $\Val$ on $G$. Write $\delta=\frac{(1-\lambda)^2}{3}$. Then there is $k^* \in \{1,\ldots,d-1\}$ such that either $$|A_{k^*}|\geq \frac{\delta|A|}{d-1}\quad \mbox{or} \quad |B_{k^*}|\geq \frac{\delta|B|}{d-1}.$$
		\end{lemma}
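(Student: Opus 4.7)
The plan is to argue by contradiction using a spectral bound on a single auxiliary function on $V$. Suppose toward a contradiction that for every $k \in \{1,\ldots,d-1\}$ both $|A_k| < \frac{\delta|A|}{d-1}$ and $|B_k| < \frac{\delta|B|}{d-1}$ hold. Summing in $k$ gives $\sum_{k=1}^{d-1}|A_k| < \delta|A|$ and $\sum_{k=1}^{d-1}|B_k| < \delta|B|$, so the ``extremal'' set $S := A_0 \cup A_d \cup B_0 \cup B_d$ satisfies $|S| > (1-\delta)n$, where $n=|V|$.

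Next I would introduce the centered labelling $f := \Val - \tfrac{1}{2}$. Because $\Val$ is balanced, $\mathbb{E}_\pi f = 0$, so $f$ is orthogonal (in $\langle\cdot,\cdot\rangle_\pi$) to the constant eigenvector $f_1 \equiv 1$. Moreover $f$ takes values in $\{\pm\tfrac{1}{2}\}$, hence $\|f\|_{2,\pi}^2 = \tfrac{1}{4}$. A direct computation based on the definition of $q(v)$ shows that for every $v \in V$,
\[
Pf(v) = \frac{1}{d}\sum_{u \sim v} f(u) = \frac{d - 2q(v)}{2d}.
\]
In particular $|Pf(v)| = \tfrac{1}{2}$ exactly when $q(v) \in \{0,d\}$, i.e.\ exactly when $v \in S$, and $(Pf(v))^2 \geq 0$ elsewhere. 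Therefore
\[
\|Pf\|_{2,\pi}^2 \;\geq\; \frac{1}{n}\sum_{v \in S}\frac{1}{4} \;>\; \frac{1-\delta}{4}.
\]

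On the other hand, the expander hypothesis together with the spectral decomposition (\ref{eq:spectral form Pf}) applied to $f$ (which lies in the span of the non-principal eigenvectors) yields $\|Pf\|_{2,\pi}^2 \leq \lambda^2 \|f\|_{2,\pi}^2 = \lambda^2/4$. Combining the two bounds gives $1-\delta < \lambda^2$, that is $\delta > 1-\lambda^2 = (1-\lambda)(1+\lambda)$. But $\delta = \frac{(1-\lambda)^2}{3}$, and a one-line check shows $\frac{(1-\lambda)^2}{3} \leq (1-\lambda)(1+\lambda)$ for every $\lambda \in [0,1)$ (equivalently $\lambda \geq -\tfrac{1}{2}$), so $\delta \leq 1-\lambda^2$, a contradiction.

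The only step requiring real care is setting up $f$ so that $(Pf)^2$ detects the extremal set $S$ exactly; once the identity $Pf(v) = (d-2q(v))/(2d)$ is in hand, everything else is the standard spectral contraction $\|Pf\|_{2,\pi} \leq \lambda\|f\|_{2,\pi}$ combined with the trivial arithmetic between $\delta$ and $1-\lambda^2$. So the ``hard part'' is really just recognizing the right test function — the rest is bookkeeping.
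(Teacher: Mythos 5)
Your proof is correct, and it takes a genuinely different route from the paper. The paper argues combinatorially: assuming all the sets $A_k$, $B_k$ ($1\leq k\leq d-1$) are small, it applies the expander mixing lemma (via Corollary \ref{cor:expander mixing lemma}) several times — to $(A,B)$, $(A,A)$, $(B,B)$, and finally to $A_d\cup B_0$ versus its complement — to force many edges into the ``middle'' classes and reach a contradiction. You instead make a single spectral argument: with $f=\Val-\tfrac12$, which is balanced and hence $\pi$-orthogonal to the constant eigenvector, the identity $Pf(v)=\frac{d-2q(v)}{2d}$ shows that $|Pf(v)|$ attains its maximal value $\tfrac12$ exactly on the extremal set $S=A_0\cup A_d\cup B_0\cup B_d$, so if $|S|>(1-\delta)n$ then $\|Pf\|_{2,\pi}^2>\frac{1-\delta}{4}$, while the spectral decomposition (\ref{eq:spectral form Pf}) gives $\|Pf\|_{2,\pi}^2\leq \lambda^2\|f\|_{2,\pi}^2=\lambda^2/4$; this forces $\delta>1-\lambda^2$, contradicting $\frac{(1-\lambda)^2}{3}\leq(1-\lambda)(1+\lambda)$ for $\lambda\in[0,1)$. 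All the individual steps check out (the strict count $|V\setminus S|<\delta n$ from summing the $d-1$ strict inequalities, the orthogonality from balancedness, and the contraction on the orthogonal complement of constants, which is exactly what the $\lambda$-expander hypothesis provides since $P$ is self-adjoint for $\langle\cdot,\cdot\rangle_\pi$). What your approach buys: it is shorter, avoids the mixing lemma entirely, and in fact proves the stronger statement that the conclusion holds for any $\delta\leq 1-\lambda^2$, which improves on the paper's $\delta=\frac{(1-\lambda)^2}{3}$ and would slightly improve the downstream constants (e.g.\ in (\ref{eq: bound Nt})). What the paper's edge-counting argument buys is mainly that it is phrased purely in terms of the mixing lemma, but since the whole paper works with the spectral decomposition anyway, your argument fits the setting without any additional hypotheses.
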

		
		\begin{proof}
			Assume that the statement is false. Then we must have
			\begin{equation}\label{eq:|A_0|+|A_d|} 
				|A_0|+|A_d|>(1-\delta)|A| \quad \mbox{and} \quad |B_0|+|B_d|> (1-\delta)|B|.
			\end{equation}
			Write $n=|V|$. Take $F_1=A$ and $F_2=B$. Corollary \ref{cor:expander mixing lemma} gives $|E(A,B)|\geq (1-\lambda) d\frac{n}{4}.$ Notice also that 
			$$|E(A,B)|\leq d |A\setminus A_d|\leq d(|A_0|+\delta|A|).$$
			Therefore, we obtain
			\begin{equation}\label{eq:Ad} 
				|A_0|\geq (1-\lambda)\frac{n}{4}-\delta\frac{n}{2}=(1-\lambda-2\delta)\frac{n}{4}.
			\end{equation}
			A completely analogous argument replacing $A_d$ with $B_0$ and $A_0$ with $B_d$ gives
			\begin{equation}\label{eq:B0} 
				|B_d|\geq (1-\lambda-2\delta)\frac{n}{4}.
			\end{equation}
			Next, take $F_1=F_2=A$. The expander mixing lemma gives $|E(A,A)|\geq (1-\lambda) d\frac{n}{4}.$ Observe also that
			\[ |E(A,A)|\leq d|A\setminus A_0|\leq d(|A_d|+\delta|A|).\]
			Thus, we get
			\begin{equation}\label{eq:A0} |A_d|\geq (1-\lambda-2\delta)\frac{n}{4}.
			\end{equation}
			Similarly, taking $F_1=F_2=B$ we obtain
			\begin{equation}\label{eq:Bd} 
				|B_0|\geq (1-\lambda-2\delta)\frac{n}{4}.
			\end{equation}
			Finally, take $F_1=A_d\cup B_0$ and $F_2=\stcomp{F_1}$. In view of (\ref{eq:Ad}), (\ref{eq:B0}), (\ref{eq:A0}), and (\ref{eq:Bd}) we have $\min\{|F_1|,|F_2|\}\geq (1-\lambda-2\delta)\frac{n}{2}$. Hence, Corollary \ref{cor:expander mixing lemma} gives
			\begin{align*} |E(F_1,F_2)|&\geq (1-\lambda) d (1-\lambda-2\delta)\frac{n}{4}=(1-\lambda)^2 d\left (1- \frac{2(1-\lambda)}{3}\right )\frac{n}{4}\geq d\delta\frac{n}{4}=2d\delta|A|.
			\end{align*}
			Therefore, we must have either $|E(A_d,F_2)|\geq d\delta |A|$ or $|E(B_0,F_2)|\geq d\delta |B|$. In the first case, for any $e=\{x,y\} \in E$ with $x \in A_d$ and $y \in F_2$, we must have $\Val(y)=0$. Thus, $y \in A\setminus A_d$. Moreover, $y$ cannot belong to $A_0$ since it is adjacent to $x$ and $\Val(x)=0$. Therefore, $y \in A\setminus(A_0\cup A_d)$. Consequently,
			\[ |A\setminus(A_0\cup A_d)|\geq \frac{|E(A_0,F_2)|}{d}\geq \delta|A|,\]
			which contradicts (\ref{eq:|A_0|+|A_d|}). If $|E(B_0,F_2)|\geq d \delta |B|$ we obtain $|B\setminus(B_0\cup B_d)|\geq \delta|B|$, also a contradiction.\qedhere
		\end{proof}
	
		Let $(X_i)$ be the SRW on a $d$-regular $\lambda$-expander graph $G$ with uniform initial distribution $\pi$. Recall that $Z_t=\sum_{i=0}^{t-1} \Val(X_i)$ for some labelling $\Val$. The sets $A_{k^*}$ and $B_{k^*}$ provided by Lemma \ref{lem:A_j or B_j} will be used to decompose $Z_t$ as a sum of two convenient independent random variables $S_t$ and $Y_t$. The idea is that as we run the chain, we frequently see cycles of length $2$. More precisely, $X_i=X_{i+2}$ with probability $\frac{1}{d}$. If we have $|A_{k^*}|\geq \frac{\delta|A|}{d-1}$, then many of these $2$-cycles will start at a vertex of $A_{k^*}$. The contribution to $Z_t$ of each one of these $2$-cycles is either $0$ with probability $\frac{k^*}{d}$, or $1$ with probability $\frac{d-k^*}{d}$, and these contributions are independent of each other. If $S_t$ represents the total contribution of the $2$-cycles starting from $A_{k^*}$, and $Y_t$ represents the contribution of the rest of the walk, then $S_t$ is a sum of i.i.d. Bernoulli random variables and $Z_t=S_t+Y_t$. Although the idea is simple, making it rigorous requires a careful analysis.

		Apply Lemma \ref{lem:A_j or B_j} to find $k^* \in \{1,\ldots,d-1\}$ such that $\max\{|A_{k^*}|,|B_{k^*}|\}\geq \frac{\delta|A|}{d-1}$, where $\delta=\frac{(1-\lambda)^2}{3}$. By symmetry, we may assume that $|A_{k^*}|\geq \frac{\delta|A|}{d-1}$. Let $(X_i^2)$ be the $2$-steps SRW, that is, the Markov chain with transition matrix $P^2$. Let $N_t$ be the number of times that $X_i^2 \in A_{k^*}$ within the first $\lfloor t/2 \rfloor-1$  steps of the chain. Then
		\begin{equation}\label{eq: bound Nt} \mathbb{E}_\pi(N_t)= \lfloor t/2\rfloor \pi(A_{k^*})= \lfloor t/2\rfloor \frac{|A_{k^*}|}{|V|} = \lfloor t/2\rfloor \frac{|A_{k^*}|}{2|A|} \geq \lfloor t/2\rfloor \frac{\delta}{2(d-1)} \geq \frac{\delta(t-2)}{4(d-1)}=\frac{(1-\lambda)^2}{12(d-1)}(t-2).
		\end{equation}

		Recall that $\prob{X_i=X_{i+2}}=\frac{1}{d}$ for any $i\geq 0$. Let $\widetilde{N}_{t}$ be a random variable that counts the number of times that one of these cycles of length $2$ starting from a vertex of $A_{k^*}$ appears at even time within the first $t$ steps. To be more precise, let $i_1,\ldots, i_{N_t}$ be the times for which $X_i^2 \in A_{k^*}$ and let $U_j$ be the indicator that $X_{i_j}=X_{i_j+2}$, that is,
		\[ U_j=\begin{cases}
			1 & \mbox{ if } X_{i_j}=X_{i_j+2};\\
			0 & \mbox{ otherwise}.
		\end{cases}\]
		Then $(U_j)$ is a sequence of independent Bernoulli random variables with parameter $\frac{1}{d}$. We define 
		\begin{equation}\label{eq: tilde(N)_t} 
			\widetilde{N}_t = \sum_{j=1}^{N_t} U_j \quad \mbox{ and } \quad B_t=\sum_{j=1}^{\lceil \frac{\mathbb{E}_\pi(N_t)}{2} \rceil} U_j.
		\end{equation}
		
		Let $b_t= \lfloor\mathbb{E}_\pi(\widetilde{N}_t)/4\rfloor$. For every $i \in\{1,\ldots,\widetilde{N}_t\}$, write $(x_i,y_i)$ for the vertices appearing in the $i$-th $2$-cycle, where $x_i\in A_{k^*}$ and $y_i$ is some neighbor of $x_i$. Let $V_i$ be the indicator of the event that $\Val(y_i)=1$ (which happens with probability $\frac{d-k^*}{d}$). Then $V_1,V_2,\ldots$ are i.i.d. Bernoulli random variables. Moreover, the $i$-th $2$-cycle adds $V_i$ to the total sum of the labels. Consider as well $\widetilde{V}_1,\widetilde{V}_2,\ldots$ independent from all previous random variables and identically distributed random variables given by
		\[
		\widetilde{V}_i= 
		\begin{cases}
			1 &\quad\mbox{with probability } \frac{d-k^*}{d};\\
			0 &\quad \mbox{ otherwise.} \\ 
		\end{cases}
		\]
		We can finally introduce the random variables used to decompose $Z_t$. Define
		\begin{equation}\label{eq: S_t and Y_t definition} S'_t=\sum_{i=1}^{\min\{b_t,\widetilde{N}_t\}} V_i, \quad Y_t= Z_t - S'_t, \quad \mbox{and}\quad S_t=S'_t+\sum_{i=1}^{(b_t-\widetilde{N}_t)^+} \widetilde{V}_i,
		\end{equation}
		where $(b_t-\widetilde{N}_t)^+=\max \{b_t-\widetilde{N}_t , 0\}$. It turns out that $\widetilde{N}_t$ is concentrated around its mean, so one should expect $b_t\leq \widetilde{N}_t$. Hence, $S'_t$ is equal to $S_t$ with high probability, or equivalently, $Z_t=S_t+Y_t$ with high probability. To prove this fact, first we need the following Chernoff-type tail bound for the binomial distribution. For $a>0$ set $\varphi(a)=1-a+a\log a$. Then $\varphi(a)>0$ for $a\neq 1$ and $\varphi(1)=0$.
		
		\begin{lemma}[Lemma 8.1 in \cite{penrose2011local}]\label{lem: binomial}
			Let $N$ be a binomial distributed random variable with $\mathbb{E}(N)=\mu>0$. Then
			\[ \prob{N\leq x}\leq e^{-\mu\varphi\left (\frac{x}{\mu}\right )}\quad \forall \, 0<x\leq \mu.\]
		\end{lemma}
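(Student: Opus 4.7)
The plan is to prove this via the classical Chernoff/Cramér method. Write $N=\sum_{i=1}^n Y_i$ where the $Y_i$ are i.i.d.\ Bernoulli$(p)$ with $np=\mu$. For any $t>0$, Markov's inequality applied to the nonnegative random variable $e^{-tN}$ gives
\[
\prob{N\leq x} \;=\; \prob{e^{-tN}\geq e^{-tx}} \;\leq\; e^{tx}\,\mathbb{E}(e^{-tN}).
\]

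Next I would compute the moment generating factor explicitly. By independence and the elementary inequality $1+y\leq e^y$,
\[
\mathbb{E}(e^{-tN}) \;=\; \bigl(1-p(1-e^{-t})\bigr)^n \;\leq\; \exp\!\bigl(-np(1-e^{-t})\bigr) \;=\; \exp\!\bigl(-\mu(1-e^{-t})\bigr).
\]
Combining these two displays yields the single-variable bound $\prob{N\leq x}\leq \exp\!\bigl(tx-\mu+\mu e^{-t}\bigr)$, valid for every $t>0$.

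The last step is to optimize. Setting the derivative of $g(t)=tx-\mu+\mu e^{-t}$ to zero gives $e^{-t^*}=x/\mu$, i.e.\ $t^*=\log(\mu/x)$. This critical point lies in $(0,\infty)$ precisely because $0<x\leq\mu$, which is exactly the hypothesis (the boundary case $x=\mu$ makes $\varphi(x/\mu)=0$ and the bound trivial). Substituting,
\[
g(t^*) \;=\; x\log(\mu/x) - \mu + x \;=\; -\mu\Bigl(1-\tfrac{x}{\mu}+\tfrac{x}{\mu}\log\tfrac{x}{\mu}\Bigr) \;=\; -\mu\,\varphi\!\bigl(\tfrac{x}{\mu}\bigr),
\]
which gives the desired inequality. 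There is no real obstacle here — the only mild thing to verify is that $t^*\geq 0$, which is where the hypothesis $x\leq\mu$ is used; without it, the optimal shift would be negative and the Markov step would require the opposite direction of the inequality.
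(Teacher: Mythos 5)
Your proof is correct: the exponential Markov bound, the factorization of $\mathbb{E}(e^{-tN})$ with $1+y\leq e^y$, and the optimization at $t^*=\log(\mu/x)$ (with the boundary case $x=\mu$ handled trivially since $\varphi(1)=0$) together give exactly the stated inequality. The paper does not prove this lemma itself, it simply cites it as Lemma 8.1 of \cite{penrose2011local}, and your Chernoff--Cram\'er argument is the standard proof of that result, so nothing further is needed.
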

	
		We also need the following consequence of Theorem 2.1 in \cite{gillman1998a}. It provides a Chernoff bound for random walks on expander graphs.
		
		\begin{lemma}\label{lem:chernoff}
			Let $(X_i)$ be the random walk on a weighted graph $G=(V,E)$ starting from stationary distribution $\pi$, and let $1-\lambda^*$ be its absolute spectral gap. Given $A \subseteq V$, let $N_t$ be the number of visits to $A$ in $t$ steps. For any $0<\gamma\leq t$,
			\[ \prob{|N_t-\mathbb{E}_\pi(N_t)| \geq \gamma} \leq 4e^{-\gamma^2(1-\lambda^*)/20t}. \]
		\end{lemma}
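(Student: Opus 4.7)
The plan is to derive this lemma directly from Theorem 2.1 in Gillman~\cite{gillman1998a}, which provides a Chernoff-type large deviation bound for additive functionals of a reversible Markov chain in terms of its absolute spectral gap. The hypotheses we need are essentially free: the random walk on a weighted graph is reversible with respect to its stationary distribution $\pi$ (so $P$ is self-adjoint with respect to $\langle \cdot,\cdot\rangle_\pi$ and has real eigenvalues), the chain starts from $\pi$ (so there is no additional $\|(\pi-\mu_0)/\pi\|_{2,\pi}$ factor in front of the bound), and $N_t = \sum_{i=0}^{t-1} \mathbf{1}_A(X_i)$ is precisely the additive functional whose fluctuations Gillman controls.

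Gillman's result is typically stated as a one-sided bound of the form
\[
\prob{N_t - \mathbb{E}_\pi(N_t) \geq \gamma} \leq 2\, e^{-\gamma^2(1-\lambda^*)/(20 t)}
\]
valid for $0 < \gamma \leq t$, after translating his original formulation (phrased in terms of the empirical average $N_t/t$ deviating by $\gamma/t$ from $\pi(A)$) into the additive form above. To obtain the two-sided bound claimed in the lemma, I would apply this one-sided inequality twice: once to $N_t$, and once to $t - N_t$, the number of visits to $V\setminus A$. Since the chain restricted to the complementary indicator has the same transition matrix, the same stationary distribution, and hence the same absolute spectral gap $1-\lambda^*$, the same exponential bound applies. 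A union bound then produces the factor of $4$ in front of $e^{-\gamma^2(1-\lambda^*)/(20 t)}$.

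The only real work is bookkeeping: checking that Gillman's constants line up so that the exponent is exactly $\gamma^2(1-\lambda^*)/(20t)$ when the deviation is measured in absolute units rather than as a fraction of $t$, and verifying that the restriction $0 < \gamma \leq t$ is what one gets by setting Gillman's scaled deviation parameter into $(0,1]$. There is no genuine obstacle here; the main thing to be careful about is that starting from $\pi$ (rather than from an arbitrary initial distribution) removes the multiplicative prefactor that would otherwise appear in Gillman's theorem and would spoil the clean form stated in the lemma.
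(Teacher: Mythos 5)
Your proposal is correct and follows essentially the same route as the paper, which states this lemma without proof as a direct consequence of Theorem 2.1 in Gillman \cite{gillman1998a}. Your derivation simply fills in the intended bookkeeping: stationarity of the initial distribution removes Gillman's $\|q/\pi\|$-type prefactor, the restriction $0<\gamma\leq t$ absorbs his remaining polynomial factor into the constant, the bound with the absolute spectral gap follows since $1-\lambda^*\leq 1-\lambda_2$, and applying the one-sided bound to both $A$ and its complement with a union bound yields the stated factor of $4$.
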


		Now we can show that $\widetilde{N}_t$ is expected to be bigger than $b_t$.
		
		\begin{lemma}\label{lem: tilde Nt}
			Let $\widetilde{N}_t$ be defined as in (\ref{eq: tilde(N)_t}). Then
			\[ \prob{\widetilde{N}_t \leq \mathbb{E}_\pi(\widetilde{N}_t)/4} \leq 5\operatorname{exp}\left (-\frac{(1-\lambda)^5}{11520d^2}(t-4)\right ). \]
		\end{lemma}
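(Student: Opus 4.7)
The plan is to bound the failure event $\{\widetilde{N}_t \leq \mathbb{E}_\pi(\widetilde{N}_t)/4\}$ by splitting it according to whether $N_t$, the number of visits of the two-step chain to $A_{k^*}$, reaches at least half of its mean. Since the $U_j$'s are i.i.d.\ Bernoulli$(1/d)$, Wald's identity gives $\mathbb{E}_\pi(\widetilde{N}_t) = \mathbb{E}_\pi(N_t)/d$, so (\ref{eq: bound Nt}) already yields a linear lower bound on $\mathbb{E}_\pi(\widetilde{N}_t)$. On the event $\{N_t > \mathbb{E}_\pi(N_t)/2\}$ one has $N_t \geq \lceil \mathbb{E}_\pi(N_t)/2 \rceil$, hence $\widetilde{N}_t \geq B_t$ directly from (\ref{eq: tilde(N)_t}), so a union bound yields
\[
  \prob{\widetilde{N}_t \leq \mathbb{E}_\pi(\widetilde{N}_t)/4} \leq \prob{N_t \leq \mathbb{E}_\pi(N_t)/2} + \prob{B_t \leq \mathbb{E}_\pi(\widetilde{N}_t)/4}.
\]

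For the first term I would apply Lemma \ref{lem:chernoff} to the two-step chain $(X_i^2)$, whose transition matrix $P^2$ has eigenvalues $\lambda_j^2$ and therefore absolute spectral gap at least $1-\lambda^2 \geq 1-\lambda$. Taking $\gamma = \mathbb{E}_\pi(N_t)/2$ over the $\lfloor t/2\rfloor - 1 \leq t/2$ steps involved gives a bound of the form $4\exp(-\mathbb{E}_\pi(N_t)^2(1-\lambda^2)/(40t))$; inserting (\ref{eq: bound Nt}) and using $(t-2)^2/t \geq t-4$ for $t \geq 4$ produces an exponent proportional to $-(1-\lambda)^5(t-4)/d^2$. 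For the second term, $B_t$ is binomial with mean $\mu \geq \mathbb{E}_\pi(\widetilde{N}_t)/2$, so $\mathbb{E}_\pi(\widetilde{N}_t)/4 \leq \mu/2$. A convexity check shows that $\mu \mapsto \mu\varphi(x/\mu)$ is increasing for $\mu \geq x$, so Lemma \ref{lem: binomial} yields $\prob{B_t \leq \mathbb{E}_\pi(\widetilde{N}_t)/4} \leq \exp(-\mathbb{E}_\pi(\widetilde{N}_t)(1-\log 2)/4)$; the exponent here is linear in $(1-\lambda)^2 t/d^2$ and hence also dominates $(1-\lambda)^5 t/d^2$, via $(1-\lambda)^3 \leq 1$.

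Relaxing both contributions to the common weaker exponent $(1-\lambda)^5(t-4)/(11520 d^2)$, the prefactors $4$ and $1$ from the two Chernoff estimates add to give the claimed $5$. The hard part is entirely arithmetic: the walk-Chernoff term naturally produces the fifth power of $1-\lambda$, while the binomial-Chernoff term only produces the second, and one must downgrade the stronger of the two in order to combine them under a single exponent while simultaneously absorbing the $(d-1)$-versus-$d$ mismatch coming from (\ref{eq: bound Nt}). The probabilistic input is entirely supplied by Lemma \ref{lem:chernoff} (applied to the squared chain) and Lemma \ref{lem: binomial}; no new ideas are needed.
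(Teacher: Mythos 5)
Your proposal is correct and follows essentially the same route as the paper: split on whether $N_t$ exceeds half its mean, control the first piece by Lemma \ref{lem:chernoff} applied to the two-step chain with $\gamma=\mathbb{E}_\pi(N_t)/2$, control the second via the binomial bound of Lemma \ref{lem: binomial} applied to $B_t$, and relax both to the common exponent to get the prefactor $4+1=5$. Your use of the monotonicity of $\mu\mapsto\mu\varphi(x/\mu)$ is just a repackaging of the paper's step $\prob{B_t\leq \mathbb{E}_\pi(\widetilde{N}_t)/4}\leq \prob{B_t\leq \mathbb{E}_\pi(B_t)/2}$, so the arguments coincide up to cosmetic constants.
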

		
		\begin{proof}
			Notice that
			
			\begin{align*} \prob{\widetilde{N}_t \leq \mathbb{E}_\pi(\tilde{N_t})/4} &\leq \prob{\widetilde{N}_t \leq \mathbb{E}_\pi(\tilde{N_t})/4 | N_t> \mathbb{E}_\pi(N_t)/2} + \prob{N_t \leq \mathbb{E}_\pi(N_t)/2}\\
				& \leq \prob{B_t\leq \mathbb{E}_\pi(\tilde{N_t})/4} + \prob{N_t \leq \mathbb{E}_\pi(N_t)/2}\\
				& \leq \prob{B_t\leq \mathbb{E}_\pi(B_t)/2} + \prob{N_t \leq \mathbb{E}_\pi(N_t)/2},
			\end{align*}
			where the last inequality follows from the fact that $\frac{\mathbb{E}_\pi(\widetilde{N}_t)}{4} \leq \frac{\mathbb{E}_\pi(B_t)}{2}$. We can use Lemma \ref{lem: binomial} to get
			\[ \prob{B_t\leq \mathbb{E}_\pi(B_t)/2} \leq e^{-\mathbb{E}_\pi(B_t) \varphi(1/2)} \leq \operatorname{exp}\left (-\frac{\delta\varphi(1/2)}{8d(d-1)}(t-2)\right ) \leq \operatorname{exp}\left (-\frac{(1-\lambda)^2}{157d^2}(t-2)\right ), \] 
			where $\delta=(1-\lambda)^2/3$. Finally, applying Lemma \ref{lem:chernoff} to $(X_i^2)$ and $A_{k^*}$ with $\gamma=\mathbb{E}_\pi(N_t)/2$ gives 
			\[ \prob{N_t \leq \mathbb{E}_\pi(N_t)/2}\leq 4e^{-\mathbb{E}_\pi(N_t)^2(1-\lambda^2)/80t}\leq 4\operatorname{exp}\left (- \frac{(1-\lambda^2)\delta^2}{16(d-1)^2} \frac{(t-2)^2}{80t}\right )\leq 4\operatorname{exp}\left (-\frac{(1-\lambda)^5}{11520d^2}(t-4)\right ).
			\]
		\end{proof}
		
		We can use Lemma \ref{lem: tilde Nt} to obtain the bound for $\mathbb{E}_\pi(|Z_t-S_t-Y_t|)$ that we need to apply Theorem \ref{theo: local CLT Markov}.
		
		\begin{lemma}\label{lem: high probability}
			Let $S_t$, $S'_t$, and $Y_t$ defined as in (\ref{eq: S_t and Y_t definition}). Then
			\[\mathbb{E}_\pi(|Z_t-S_t-Y_t|) \leq  \frac{10^{10} d^3}{(1-\lambda)^4} \frac{1}{t}.\]
		\end{lemma}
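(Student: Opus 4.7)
The identity that drives the proof is the observation that by the definitions in (\ref{eq: S_t and Y_t definition}),
\[
Z_t - S_t - Y_t \;=\; S'_t - S_t \;=\; -\sum_{i=1}^{(b_t-\widetilde{N}_t)^+} \widetilde{V}_i,
\]
and since the $\widetilde{V}_i$ take values in $\{0,1\}$, this yields the pointwise bound $|Z_t-S_t-Y_t|\le (b_t-\widetilde{N}_t)^+$. Thus it suffices to estimate $\mathbb{E}_\pi((b_t-\widetilde{N}_t)^+)$.

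Now $b_t=\lfloor\mathbb{E}_\pi(\widetilde{N}_t)/4\rfloor$ is deterministic, so on the event $\{\widetilde{N}_t>\mathbb{E}_\pi(\widetilde{N}_t)/4\}$ the quantity $(b_t-\widetilde{N}_t)^+$ vanishes, while on its complement it is at most $b_t$. Consequently
\[
\mathbb{E}_\pi(|Z_t-S_t-Y_t|) \;\le\; b_t\cdot \prob{\widetilde{N}_t\le \mathbb{E}_\pi(\widetilde{N}_t)/4}.
\]
I control the first factor by $\mathbb{E}_\pi(\widetilde{N}_t)=\mathbb{E}_\pi(N_t)/d\le t/(2d)$, which gives $b_t\le t/(8d)$. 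The second factor is precisely what Lemma \ref{lem: tilde Nt} bounds: at most $5\exp(-c(t-4))$ with $c=(1-\lambda)^5/(11520\,d^2)$.

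The only remaining step is to convert the exponentially decaying bound $\tfrac{5t}{8d}e^{-c(t-4)}$ into the $1/t$ form demanded by the statement. This is routine calculus: multiplying and dividing by $t$, and using $\sup_{s\ge 0} s^2 e^{-cs}=4/(e^2 c^2)$ together with $e^{4c}\le e$, one gets a bound of the form $\mathrm{const}\cdot d\cdot c^{-2}\cdot (1/t)$; substituting the value of $c$ yields the desired $C\,d^{3}(1-\lambda)^{-\alpha}/t$ shape. The conceptual heart of the argument is entirely contained in Lemma \ref{lem: tilde Nt}; the present lemma amounts to careful bookkeeping of how the discrepancy $(b_t-\widetilde{N}_t)^+$ contributes in $L^1$, and I expect no substantive obstacle beyond matching the advertised constants.
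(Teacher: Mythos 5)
Your argument is correct and is essentially the paper's own proof: bound $|Z_t-S_t-Y_t|=|S'_t-S_t|$ by $b_t$ on the event $\{\widetilde{N}_t\le \mathbb{E}_\pi(\widetilde{N}_t)/4\}$ (it vanishes otherwise), invoke Lemma \ref{lem: tilde Nt} for that event's probability, and absorb the exponential into a $1/t$ bound via $\sup_{s\ge 0} s^2e^{-cs}=4/(e^2c^2)$; your pointwise bound $(b_t-\widetilde{N}_t)^+$ and the estimate $b_t\le t/(8d)$ are in fact marginally sharper than the paper's $|S'_t-S_t|\le b_t\le t/(4d)$. One caveat on the bookkeeping you defer: taking $c=(1-\lambda)^5/(11520d^2)$ exactly as stated in Lemma \ref{lem: tilde Nt}, the computation gives a constant of order $d^3(1-\lambda)^{-10}$ rather than the advertised $(1-\lambda)^{-4}$ (the paper's own proof silently uses $(1-\lambda)^2$ in the exponent at this step), so the mismatch in the power of $1-\lambda$ is inherited from the source rather than a defect of your method; also, the prefactor is $\mathrm{const}\cdot d^{-1}c^{-2}$, not $\mathrm{const}\cdot d\cdot c^{-2}$ as written, which is precisely what produces the $d^3$.
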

		
		\begin{proof}
			Note that the function $f(t)=t^2e^{-\alpha t}$ attains its maximum at $t=2\alpha^{-1}$. Write $\alpha=\frac{(1-\lambda)^2}{11520d^2}$. Then Lemma \ref{lem: tilde Nt} gives
			\begin{align*}
				\prob{\widetilde{N}_t \leq \mathbb{E}_\pi(\widetilde{N}_t)/4} &\leq 5e^{-\alpha (t-4)}= \left (5e^4t^2e^{-\alpha t} \right )\frac{1}{t^2}\leq \left (5e^4 \frac{4e^{-2}}{\alpha^2}\right )\frac{1}{t^2}\leq \frac{2\cdot 10^{10} d^4}{(1-\lambda)^4} \frac{1}{t^2}.
			\end{align*}
			Observe that $|Z_t-S_t-Y_t| = |S'_t - S_t| \leq b_t \leq \frac{t}{4d}$. Since $Z_t=Y_t+S_t$ if $\widetilde{N}_t\geq b_t$, we have
			\[ \mathbb{E}_\pi(|Z_t-S_t-Y_t|) \leq \frac{t}{4d} \prob{Z_t \neq S_t + Y_t} \leq  \frac{10^{10} d^3}{(1-\lambda)^4} \frac{1}{t}.\qedhere\]

		\end{proof}
		Finally, the next simple result shows that a Bernoulli random variable is nonlattice. We include its proof for completeness.
		
		\begin{lemma}\label{lem: nonlattice}
			Let $V$ be a Bernoulli random variable with parameter $p \in (0,1)$ and take $\theta_0 \in (0,\pi)$. Then $X$ is a $\eta$-nonlattice variable for $\theta_0$ with 
			\[ \eta= p(1-p)(1-\cos(\theta_0)).\]
		\end{lemma}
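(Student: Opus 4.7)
The plan is a direct computation of the modulus of the characteristic function of $V$, followed by a one-line monotonicity argument. Writing $q = 1-p$, one has
\[ \varphi_V(\theta) = \mathbb{E}(e^{i\theta V}) = q + pe^{i\theta} = (q + p\cos\theta) + ip\sin\theta. \]
Squaring the real and imaginary parts and simplifying using $\cos^2\theta + \sin^2\theta = 1$ and $p + q = 1$ yields
\[ |\varphi_V(\theta)|^2 = q^2 + 2pq\cos\theta + p^2 = 1 - 2p(1-p)\bigl(1 - \cos\theta\bigr). \]

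With this identity in hand, the remaining work is purely real-analytic. I would invoke the elementary bound $\sqrt{1-x} \le 1 - x/2$ valid for $x\in[0,1]$ (note that $2p(1-p)(1-\cos\theta) \in [0, 4p(1-p)] \subseteq [0,1]$) to pass from $|\varphi_V|^2$ to $|\varphi_V|$, which gives
\[ |\varphi_V(\theta)| \le 1 - p(1-p)\bigl(1 - \cos\theta\bigr). \]
Finally, since $1 - \cos\theta$ is even and increasing on $[0,\pi]$, for every $\theta$ with $\theta_0 \le |\theta| \le \pi$ one has $1 - \cos\theta \ge 1 - \cos\theta_0$, so that
\[ |\varphi_V(\theta)| \le 1 - p(1-p)\bigl(1 - \cos\theta_0\bigr) = 1 - \eta, \]
which is exactly the $\eta$-nonlattice property claimed.

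There is no real obstacle here: the only thing to verify carefully is that the argument of the square-root inequality lies in $[0,1]$, which is automatic because $p(1-p) \le 1/4$ and $1 - \cos\theta \le 2$. The choice of $\eta$ in the statement is visibly tight (as $p \to 0$ or $p \to 1$ the Bernoulli variable becomes degenerate and $\eta \to 0$; as $\theta_0 \to 0$ the characteristic function approaches $1$ and again $\eta \to 0$), which is a useful sanity check on the final bound.
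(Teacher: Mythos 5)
Your proof is correct and is essentially the paper's argument: both compute $|\varphi_V(\theta)|^2 = 1 - 2p(1-p)(1-\cos\theta)$ and then pass to $|\varphi_V|$ by an elementary inequality, your bound $\sqrt{1-x}\le 1-x/2$ being the same step as the paper's $1-|\varphi_V|\ge \tfrac12(1-|\varphi_V|^2)$ in disguise. The only cosmetic difference is that the paper first reduces to $\theta=\theta_0$ via the convex-combination picture of $(1-p)+pe^{i\theta}$, whereas you keep $\theta$ general and invoke the monotonicity of $1-\cos\theta$ on $[0,\pi]$ at the end.
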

		
		\begin{proof}
			Write $\varphi_V$ for the characteristic function of $V$, that is,
			\[\varphi_V(\theta)= \mathbb{E}(e^{i\theta V})= (1-p) + pe^{i\theta} \quad \forall \, \theta \in [-\pi,\pi],\]
			which is a convex combination of the points $1$ and $e^{i\theta}$. Therefore, when $\theta_0\leq |\theta|\leq \pi$ we clearly have $|\varphi_V(\theta)|\leq |\varphi_V(\theta_0)|$. A simple computation yields
			\[ |\varphi_V(\theta_0)|^2= p^2\sin(\theta_0)^2 + ((1-p)+p\cos(\theta_0))^2= p^2+(1-p)^2 +2p(1-p)\cos(\theta_0).\]
			Using the equality $(a-b)(a+b)=a^2-b^2$ and $|\varphi_V|\leq 1$ we conclude that
			\[ 1-|\varphi_V(\theta_0)| \geq \frac{1}{2} (1-|\varphi_V(\theta_0)|^2)=p(1-p)(1-\cos(\theta_0)).\qedhere\]
		\end{proof}
	
		We can now present the proof of our main result Theorem \ref{theo:main}. Although it does not optimize the constant, it shows that we can take
		\[ C_1(\lambda,d)= \frac{2\cdot 10^{13}d^9}{(1-\lambda)^{10}}.\]

		\begin{proof}[Proof of Theorem \ref{theo:main}]
			Consider the random variables $\widetilde{N}_t$, $V_i$, $\widetilde{V}_i$, $S'_t$, $S_t$, and $Y_t$ appearing in (\ref{eq: S_t and Y_t definition}). Let us check that the hypotheses of Theorem \ref{theo: local CLT Markov} are satisfied. First, $(X_i)$ is the SRW on a finite $\lambda$-expander graph with $\lambda<1$, whence it is irreducible and aperiodic. The function considered is $\Val\colon V \longrightarrow \mathbb{Z}$. Observe that $S_t$ does not get affected by conditioning on $Y_t$. Indeed, regardless of the value of $Y_t$, $S_t$ is a sum of $b_t$ i.i.d. Bernouilli random variables that are independent of $Y_t$. Therefore, $S_t$ and $Y_t$ are independent. Moreover, Lemma \ref{lem: high probability} gives $\mathbb{E}_\pi(|Z_t-S_t-Y_t|)\leq M/t$, with 
			\[ M=\frac{10^{10} d^3}{(1-\lambda)^4}. \] 
			Finally, Lemma \ref{lem: nonlattice} shows that $V_i$ and $\widetilde{V}_i$ are $\eta$-nonlattice for $\theta_0=(1-\lambda)^2\sigma^2/2708$ with 
			\[ \eta\geq \frac{1}{d} \left (1-\frac{1}{d}\right )(1-\cos(\theta_0)) = \frac{d-1}{d^2}(1-\cos(\theta_0)).\]
			Since in our case $\|\Val\|_{\infty}=1$ and $\mu=\pi$, Theorem \ref{theo: local CLT Markov} gives
			\begin{equation}\label{eq: theorem 4.1 applied} \left  |\prob{Z_t=k} - \frac{1}{\sigma\sqrt{t}}\phi\left (\frac{k-t/2}{\sigma\sqrt{t}}\right )\right | \leq \left (\pi M +\frac{1}{\theta_0 \sigma^2} + \frac{C_4}{\sigma^4(1-\lambda)^2} \right ) \frac{1}{t} + \frac{1}{e\eta} \frac{1}{b_t}.
			\end{equation}
			Recall that $b_t=\lfloor \mathbb{E}_\pi(\widetilde{N}_t)/4 \rfloor$. Using the bound (\ref{eq: bound Nt}) for $\mathbb{E}_\pi(N_t)$ we get
			\[
				\frac{\mathbb{E}_\pi(\widetilde{N}_t)}{4}= \frac{\mathbb{E}_\pi(N_t)}{4d} \geq \frac{(1-\lambda)^2}{48d(d-1)}(t-2)\geq \frac{(1-\lambda)^2t-2}{48d(d-1)}.
			\]
			If $t \leq 48(1-\lambda)^{-2}d^3$ there is nothing to prove. Otherwise, a simple computation shows that
			\begin{equation}\label{eq: b_t}
				b_t=\left \lfloor \frac{\mathbb{E}_\pi(\widetilde{N}_t)}{4}\right  \rfloor \geq \frac{\mathbb{E}_\pi(\widetilde{N}_t)}{4} -1 \geq \frac{(1-\lambda)^2t -48d^2}{48d(d-1)}\geq \frac{(1-\lambda)^2}{48d^2}t.
			\end{equation}
			Next, we provide a bound for $\sigma$. Observe that
			\begin{align*}
				\Var(Z_t)&\geq \mathbb{E}_\pi(\Var(Z_t|\widetilde{N}_t)) \geq \sum_{u= b_t }^\infty \Var(Z_t|\widetilde{N}_t=u) \prob{\widetilde{N}_t=u}=\sum_{u= b_t }^\infty \Var(S_t + Y_t|\widetilde{N}_t=u) \prob{\widetilde{N}_t=u}\\
				&\geq \sum_{u= b_t }^\infty \Var(S_t|\widetilde{N}_t=u) \prob{\widetilde{N}_t=u}= \Var(S_t) \sum_{u= b_t }^\infty \prob{\widetilde{N}_t=u} \geq b_t\frac{d-1}{d^2} \prob{\widetilde{N}_t\geq b_t}.
			\end{align*}
			As $t$ grows, Lemma \ref{lem: tilde Nt} shows that $\prob{\widetilde{N}_t\geq b_t}$ tends to $1$. Thus, (\ref{eq: b_t}) gives
			\begin{equation}\label{eq: bound sigma} 
				\sigma \geq \sqrt{\frac{(1-\lambda)^2(d-1)}{48d^4}} \geq \sqrt{\frac{(1-\lambda)^2}{96d^3}} \geq \frac{(1-\lambda)}{10d^{3/2}}.
			\end{equation}
			Finally, we give a bound for $\eta$. It is straightforward to show that $\cos(x)\leq 1-x^2/5$ for any $x \in [-\pi,\pi]$. Consequently,
			\begin{equation}\label{eq: bound eta}\eta \geq \frac{d-1}{d^2}(1-\cos(\theta_0)) \geq \frac{1}{2d} \frac{\theta_0^2}{5}= \frac{(1-\lambda)^4\sigma^4}{10\cdot 2708^2d} \geq \frac{(1-\lambda)^8}{8\cdot 10^{11}d^7}. 
			\end{equation}
		Substituting the value of $M$ and $\theta_0$ and applying the bounds (\ref{eq: b_t}), (\ref{eq: bound sigma}), and (\ref{eq: bound eta}) in (\ref{eq: theorem 4.1 applied}) yields
		\[\left  |\prob{Z_t=k} - \frac{1}{\sigma\sqrt{t}}\phi\left (\frac{k-t/2}{\sigma\sqrt{t}}\right )\right | \leq \left (\frac{\pi10^{10}d^3}{(1-\lambda)^4} + \frac{10^8 d^6}{(1-\lambda)^6} + \frac{1.5 \cdot 10^{13} d^9}{(1-\lambda)^{10}}\right ) \frac{1}{t} \leq  \frac{2\cdot 10^{13}d^9}{(1-\lambda)^{10}} \frac{1}{t}.\qedhere
		\]
		\end{proof}
	
		\begin{remark}\label{remark better bound}
			It is possible to obtain a better bound in Theorem \ref{theo:main} if we assume that $\lambda$ is small enough. If $\lambda\leq \frac{1}{5}$, then Theorem \ref{theo:main} holds with $C_1(d)=4\cdot10^{11}d^3$. This will be proved in Section \ref{sec:variance}.
		\end{remark}
		
		\section{Proofs of Proposition \ref{prop: variance} and Corollary \ref{cor:main}}\label{sec:variance}
		Let $G=(V,E)$ be a $d$-regular $\lambda$-expander graph, let $(X_i)$ be the SRW on $G$ with uniform initial distribution $\pi$, and let $\Val \colon V\longrightarrow \{0,1\}$ be a labelling with $\alpha=\mathbb{E}_\pi(\Val)\in (0,1)$. For simplicity, write $Y_i=\Val(X_i)$. Recall that $Z_t=\sum_{i=0}^{t-1} Y_i$, thus its variance can be expressed as
		\begin{equation}\label{eq:var of sum} 
			\Var(Z_t)=\sum_{j=0}^{t-1} \Var(Y_j) + 2\sum_{i<j} \Cov(Y_i,Y_j).
		\end{equation}
		It is clear that $\Var(Y_j)=\mathbb{E}_\pi(Y_j^2)- \mathbb{E}_\pi(Y_j)^2 = \alpha - \alpha^2=\alpha(1-\alpha)$ and $\Cov(Y_i,Y_j)=\mathbb{E}_\pi(Y_iY_j) - \alpha^2$. Recall that $A=\{x \in V \colon \Val(x)=0\}$ and $B=\stcomp{A}$. We have
		\begin{equation}\label{eq: E(Y_i Y_j)} \mathbb{E}_\pi(Y_iY_j)= \prob{Y_i=1,Y_j=1}=\alpha \prob{Y_j=1|Y_i=1}=\alpha \prob{X_{j-i}\in B|X_0 \in B}.
		\end{equation}
		Thus, we want to find the probability that the chain is at a vertex of $B$ after $j-i$ steps when the initial vertex is chosen uniformly at random from $B$.
		
		\begin{lemma}\label{lem:prob Xk in A}
			Let $G=(V,E)$ be a $d$-regular graph with $n$ vertices and let $B\subseteq V$. If $(X_i)$ is the simple random walk on $V$ starting uniformly at random from $B$, we have
			\[\prob{X_k \in B} = \pi(B)+ \pi(B)\sum_{j=2}^n \langle \pi_B,f_j\rangle^2 \lambda^k_j \quad \forall \, k \in \mathbb{N},\]
			where $(f_j)_{j=1}^n$ is the orthonormal basis of eigenvectors corresponding to the eigenvalues $(\lambda_j)_{j=1}^n$ and $\pi_B$ is the uniform distribution on $B$.
		\end{lemma}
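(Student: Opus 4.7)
The plan is to interpret $\prob{X_k \in B}$ as a weighted inner product and then diagonalize using the spectral decomposition (\ref{eq:spectral form Pf}). Writing $\mathbf{1}_B$ for the indicator function of $B$ and $n=|V|$, my first step is to unfold
\[ \prob{X_k \in B} = \sum_{x \in V} \pi_B(x)\, P^k \mathbf{1}_B(x) = n \langle \pi_B, P^k \mathbf{1}_B \rangle_\pi, \]
where the last equality uses that $\pi$ is uniform on $V$.

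Next I apply the spectral formula (\ref{eq:spectral form Pf}) to $P^k \mathbf{1}_B$, which yields
\[ \prob{X_k \in B} = n \sum_{j=1}^n \langle \mathbf{1}_B, f_j \rangle_\pi \langle \pi_B, f_j \rangle_\pi \lambda_j^k. \]
The rest is routine bookkeeping between the two inner products introduced in Section~\ref{sec:preliminaries}. Using the identity $\langle g, h \rangle_\pi = \frac{1}{n} \langle g, h \rangle$ (valid because $\pi$ is uniform) together with $\mathbf{1}_B = |B|\, \pi_B$, each summand collapses to a scalar multiple of $\langle \pi_B, f_j \rangle^2$, producing
\[ \prob{X_k \in B} = \pi(B) \sum_{j=1}^n \langle \pi_B, f_j \rangle^2 \lambda_j^k. \]

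To finish, I isolate the $j=1$ term. Because $P$ is stochastic and $\pi$ is uniform, the eigenvector associated with $\lambda_1 = 1$ that is normalized with respect to $\|\cdot\|_{2,\pi}$ is the constant function $f_1 \equiv 1$, whence $\langle \pi_B, f_1 \rangle = 1$. This summand contributes exactly $\pi(B)$, and the remaining sum from $j=2$ to $n$ gives the formula claimed in the lemma. The argument is essentially computational: no analytic difficulty arises, and the only point demanding attention is consistently distinguishing $\langle \cdot,\cdot\rangle$ from $\langle \cdot,\cdot\rangle_\pi$ and correctly identifying the normalized leading eigenvector.
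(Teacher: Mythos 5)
Your proof is correct and follows essentially the same route as the paper: both expand in the eigenbasis via (\ref{eq:spectral form Pf}), use $\mathbf{1}_B=|B|\,\pi_B$ to pass between $\langle\cdot,\cdot\rangle$ and $\langle\cdot,\cdot\rangle_\pi$, and identify the $j=1$ term as $\pi(B)$. The only cosmetic difference is that you apply $P^k$ to the indicator $\mathbf{1}_B$ and pair with the initial distribution $\pi_B$, whereas the paper applies $P^k$ to $\pi_B$ (invoking the symmetry of $P$ to view $P^k\pi_B$ as the time-$k$ distribution) and pairs with $\Val=\mathbf{1}_B$; the two formulations are equivalent by the self-adjointness of $P$.
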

		
		\begin{proof}
			Let $P$ denote the transition matrix of $(X_i)$. Since $P$ is symmetric, $P^k \pi_B$ is the vector of probabilities of the chain $(X_i)$ after $k$ steps. Therefore, $\prob{X_k \in B}=\langle P^k \pi_B,\Val\rangle$. We can use (\ref{eq:spectral form Pf}) to decompose $P$ and obtain
			\[ P^k \pi_B= \sum_{j=1}^n \langle \pi_B,f_j\rangle_\pi f_j \lambda^k_j=\pi + \sum_{j=2}^n \langle \pi_B,f_j\rangle_\pi f_j \lambda^k_j.\]
			Since $\pi_B=\frac{1}{|B|}\Val$, we conclude
			\[ \langle P^k\pi_B,\Val\rangle = \pi(B) + \sum_{j=2}^n \langle \pi_B,f_j\rangle_\pi \langle f_j,\Val\rangle \lambda^k_j= \pi(B)+\pi(B) \sum_{j=2}^n \langle \pi_B,f_j\rangle^2 \lambda^k_j.\qedhere\]
		\end{proof}
		
		\begin{proof}[Proof of Proposition \ref{prop: variance}]
		Recall that $A=\{x\in V \colon \Val(x)=0\}$, $B=\stcomp{A}$, and $\alpha=\pi(B)$. Fix $0\leq i<j\leq t-1$. Lemma \ref{lem:prob Xk in A} and (\ref{eq: E(Y_i Y_j)}) give
		\[ \Cov(Y_i,Y_j)=\mathbb{E}_\pi(Y_i Y_j)- \alpha^2 = \alpha^2 \sum_{k=2}^n \langle \pi_B,f_j\rangle^2\lambda^{j-i}_k \quad \forall\, i<j.\]
		Adding all covariances yields
		\[ \sum_{i<j} \Cov (Y_i,Y_j) = \alpha^2\sum_{k=1}^{t-1}(t-k)\sum_{j=2}^n\langle \pi_B,f_j\rangle^2\lambda^{k}_j.\]
		The formula (\ref{eq:var formula}) follows now from (\ref{eq:var of sum}). Finally, if we asssume that $G$ is a $\lambda$-expander we get
		\begin{align*}
			\left |\Var(Z_t)-\alpha(1-\alpha)t\right |&\leq 2\alpha^2 \sum_{k=1}^{t-1}(t-k)\lambda^{k}\sum_{j=2}^n\langle \pi_B,f_j\rangle^2\leq 2\alpha^2 t \sum_{k=1}^{t-1} \lambda^k (n\|\pi_B\|_2^2-\langle \pi_B,f_1\rangle^2)\\
			&= 2\alpha^2 \left (\frac{n}{|B|}-1\right ) t\sum_{k=1}^{t-1}\lambda^k 
			\leq 2\alpha(1-\alpha)t\frac{\lambda}{1-\lambda}.\qedhere
		\end{align*}
		\end{proof}
	
		As Remark \ref{remark better bound} claims, for small values of $\lambda$ we can use the bound for the variance provided by Proposition \ref{prop: variance} to obtain a better bound in Theorem \ref{theo:main}.
		
		\begin{proof}[Proof of Remark \ref{remark better bound}]
			In this case, we have $\alpha=1/2$. Dividing by $t$ and sending $t$ to infinity in (\ref{eq:var bound}) give
			\[ \left |\sigma^2 - \frac{1}{4} \right | \leq \frac{1}{2} \frac{\lambda}{1-\lambda}\leq \frac{1}{8}, \]
			since we assume $\lambda\leq 1/5$. Therefore, $\sigma^2\geq 1/8$. Using this bound instead of (\ref{eq: bound sigma}) also allow us to obtain better bounds for $\theta_0$ and $\eta$. Indeed, we have
			$$\theta_0 = \frac{(1-\lambda)^2\sigma^2}{2708} \geq \frac{1}{33850} \quad \mbox{and} \quad \eta \geq \frac{1}{2d} \frac{\theta_0^2}{5} \geq \frac{1}{1.2 \cdot 10^{10}d}.$$
			Using these bounds and the bound (\ref{eq: b_t}) for $b_t$ in (\ref{eq: theorem 4.1 applied}) yields
			\[ \left  |\prob{Z_t=k} - \frac{1}{\sigma\sqrt{t}}\phi\left (\frac{k-t/2}{\sigma\sqrt{t}}\right )\right |  \leq \left (8\cdot 10^{10}d^3 + 10^6 + 3.4 \cdot 10^{11}d^3\right ) \frac{1}{t} \leq  4\cdot 10^{11}d^3 \frac{1}{t}.\qedhere
			\]
			
		\end{proof}

		Finally, we prove Corollary \ref{cor:main}. First, we show that the normalizing constant $D(\mu,\sigma^2)$ appearing in (\ref{eq:disc normal}) is close to $1$ when the variance is large.
		
		\begin{lemma}\label{lem: bound normalizing constant}
			Let $D(\mu,\sigma^2)$ be the normalizing constant appearing in (\ref{eq:disc normal}). For $\sigma^2\geq1$ we have 
			$$|1-D(\mu,\sigma^2)| \leq \frac{1}{\sqrt{2\pi}} \frac{1}{\sigma}.$$
		\end{lemma}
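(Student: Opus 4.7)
The plan is to apply the Poisson summation formula. Let $g(x)=\sigma^{-1}\phi((x-\mu)/\sigma)$ be the density of $\mathcal{N}(\mu,\sigma^2)$, so that $D(\mu,\sigma^2)=\sum_{k\in\mathbb{Z}} g(k)$ and $\int_\mathbb{R} g=1$. A standard Gaussian integral computation gives the Fourier transform $\hat g(\xi)=\int g(x)e^{-2\pi i x\xi}\,dx = e^{-2\pi i\mu\xi-2\pi^2\sigma^2\xi^2}$. Since $g$ and $\hat g$ are in the Schwartz class, Poisson summation applies and yields $D(\mu,\sigma^2)=\sum_{n\in\mathbb{Z}}\hat g(n)$. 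Isolating the $n=0$ term (which equals $1$) and pairing $n$ with $-n$ produces the Jacobi-theta-type identity
\[
D(\mu,\sigma^2)-1 \;=\; 2\sum_{n=1}^{\infty} \cos(2\pi\mu n)\, e^{-2\pi^2\sigma^2 n^2}.
\]

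From this identity the bound follows by direct estimation. Using $|\cos|\leq 1$ and $n^2\geq n$ for $n\geq 1$, I would dominate by a geometric series:
\[
|D(\mu,\sigma^2)-1| \;\leq\; 2\sum_{n=1}^{\infty} e^{-2\pi^2\sigma^2 n^2} \;\leq\; \frac{2e^{-2\pi^2\sigma^2}}{1-e^{-2\pi^2\sigma^2}}.
\]
For $\sigma^2\geq 1$ one has $e^{-2\pi^2\sigma^2}\leq e^{-2\pi^2}<1/2$, so the right-hand side is at most $4e^{-2\pi^2\sigma^2}$.

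To conclude I would verify the cosmetic inequality $4\sqrt{2\pi}\,\sigma\,e^{-2\pi^2\sigma^2}\leq 1$ for $\sigma\geq 1$. Differentiating shows that $\sigma\mapsto \sigma e^{-2\pi^2\sigma^2}$ is decreasing on $[1/(2\pi),\infty)\supset [1,\infty)$, so its maximum on $[1,\infty)$ is attained at $\sigma=1$ with value $e^{-2\pi^2}$. The inequality thus reduces to $4\sqrt{2\pi}\,e^{-2\pi^2}\leq 1$, which is comfortably true (the left-hand side is of order $10^{-8}$).

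The main ``obstacle'' here is essentially formal: the whole argument hinges on the Fourier transform of a Gaussian and a textbook use of Poisson summation. The lemma as stated has enormous slack, since Poisson summation in fact yields a bound decaying like $e^{-2\pi^2\sigma^2}$, vastly stronger than the $O(1/\sigma)$ required. An alternative, purely elementary route would be to compare the Riemann sum $\sum_k g(k)$ with $\int_\mathbb{R} g$ interval by interval on $[k-\tfrac12,k+\tfrac12]$, using unimodality of $g$; this gives the bound $|D-1|\leq \|g\|_\infty = 1/(\sqrt{2\pi}\sigma)$ after careful bookkeeping around the mode, and avoids Fourier methods entirely.
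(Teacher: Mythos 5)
Your proof is correct, but it takes a genuinely different route from the paper. The paper argues elementarily, exactly along the lines of the ``alternative route'' you sketch at the end: it compares the sum $\sum_{k}\sigma^{-1}\phi\bigl(\tfrac{k-\mu}{\sigma}\bigr)$ with $\int_{\mathbb{R}}\phi(x)\,dx=1$ term by term, using that $\phi$ increases on $(-\infty,0)$ and decreases on $(0,\infty)$, so that the Riemann sum over- or under-shoots the integral by at most one cell of height $\phi(0)/\sigma$ near the mode; this yields $|D(\mu,\sigma^2)-1|\leq \tfrac{1}{\sqrt{2\pi}\,\sigma}$ directly. Your argument instead computes $\hat g(\xi)=e^{-2\pi i\mu\xi-2\pi^2\sigma^2\xi^2}$ and applies Poisson summation (legitimate, since $g$ is Schwartz), obtaining the exact theta-type identity $D(\mu,\sigma^2)-1=2\sum_{n\geq1}\cos(2\pi\mu n)e^{-2\pi^2\sigma^2 n^2}$, whence $|D-1|\leq 4e^{-2\pi^2\sigma^2}$ for $\sigma\geq1$; the closing calibration $4\sqrt{2\pi}\,\sigma e^{-2\pi^2\sigma^2}\leq1$ on $[1,\infty)$ is checked correctly via monotonicity of $\sigma e^{-2\pi^2\sigma^2}$ past $\sigma=1/(2\pi)$. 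What each approach buys: yours gives an essentially optimal, exponentially small error and pinpoints its exact oscillatory form (which could matter if one wanted sharper constants), at the cost of invoking Fourier analysis; the paper's comparison is self-contained and already sufficient, since in Corollary \ref{cor:main} the resulting $\tfrac{1}{\sqrt{2\pi}\,\sigma\sqrt{t}}$ term is dominated by the $\log(t)^{1/4}/\sqrt{t}$ main term, so the extra strength of the Poisson bound is never needed downstream.
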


		\begin{proof}
			The density function of a normal distribution increases on $(-\infty, 0)$ and decreases on $(0,\infty)$. Therefore,
			\[\phi(k)\geq \sigma\int_{k-\frac{1}{\sigma}}^k \phi(x)\, dx \quad \mbox{ for } k<0 \quad \mbox{ and } \quad \phi(k)\geq \sigma \int_{k}^{k+\frac{1}{\sigma}} \phi(x) \, dx \quad \mbox{ for } k \geq 0.\] 
			Let $I_1=\{ k \in \mathbb{Z} \colon k \leq \mu\}$ and $I_2= \mathbb{Z} \setminus I_1$. The previous observation gives
			\[ \sum_{k \in I_1} \sigma^{-1}\phi\left (\frac{k-\mu}{\sigma}\right ) \geq \sum_{k \in I_1}  \int_{ \frac{k-\mu}{\sigma} - \frac{1}{\sigma}}^{\frac{k-\mu}{\sigma}} \phi(x) \, dx = \int_{-\infty}^{\frac{\lfloor \mu \rfloor -\mu}{\sigma}} \phi(x) \, dx.\]
			Similarly,
			\[ \sum_{k \in I_2} \sigma^{-1}\phi\left (\frac{k-\mu}{\sigma}\right ) \geq \sum_{k \in I_2}  \int_{ \frac{k-\mu}{\sigma}}^{\frac{k-\mu}{\sigma} + \frac{1}{\sigma}} \phi(x) \, dx = \int_{\frac{\lfloor \mu \rfloor -\mu}{\sigma} + \frac{1}{\sigma}}^{\infty} \phi(x) \, dx.\]
			Since $\phi$ attains its maximum at $x=0$, we deduce that
			\[ \sum_{k \in\mathbb{Z}} \sigma^{-1}\phi\left (\frac{k-\mu}{\sigma}\right ) \geq 1- \frac{1}{\sigma} \phi(0) =1-\frac{1}{\sqrt{2\pi}}\frac{1}{\sigma}.\]
			Similarly, $\phi(k)\leq \sigma\int_{k}^{k+1/\sigma} \phi(x) \, dx$ for any $k\leq -\frac{1}{2\sigma}$ and $\phi(k)\leq \sigma \int_{k-1/\sigma}^{k} \phi(x)\, dx $ for $k\geq \frac{1}{2\sigma}$. Notice that there is a unique integer $k^* \in \mathbb{Z}$ such that 
			\[ -\frac{1}{2\sigma} \leq \frac{k^*-\mu}{\sigma} < \frac{1}{2\sigma}.\] Therefore,
			\[ \sum_{k \in \mathbb{Z}} \sigma^{-1} \phi\left (\frac{k-\mu}{\sigma} \right ) \leq \sum_{k \in \mathbb{Z}\setminus\{k^*\}} \sigma^{-1} \phi\left (\frac{k-\mu}{\sigma} \right ) + \frac{1}{\sigma}\phi(0) \leq 1+\frac{1}{\sigma} \phi(0) = 1+\frac{1}{\sqrt{2\pi}} \frac{1}{\sigma}.\qedhere\]
		\end{proof}
		
		Let $\Val$ be a balanced labelling on $G$ and consider $Z_t=\sum_{i=0}^{t-1} \Val(X_i)$. We write $\varphi_{Z}$ for the ch.f. of $Z_t$ and $\varphi_{t}$ for the ch.f. of a normal distribution with mean $t/2$ and variance $t\sigma^2$, where $\sigma^2 = \lim_{t \to \infty} \Var(Z_t)/t$. The next technical result bounds the $\operatorname{L}^2$-distance between $\varphi_Z$ and $\varphi_t$.
		\begin{lemma}\label{lem: L2 norm of varphiZ and varphit}
			The characteristic function of $Z_t$ satisfies
			\[ \|\varphi_Z - \varphi_{t} \|_2 = \left (\frac{1}{2\pi} \int_{-\pi}^\pi |\varphi_Z(\theta) - \varphi_t(\theta)|^2 \, d\theta \right )^{1/2} \leq \frac{3\cdot 10^{13} d^9}{(1-\lambda)^{10}}\frac{1}{t^{3/4}}.\]
		\end{lemma}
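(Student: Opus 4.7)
The plan is to mirror the frequency decomposition used in the proof of Lemma \ref{lem: Z_n=S_n+Y_n}, but to bound the $L^2$ rather than the $L^1$ norm of $\varphi_Z - \varphi_t$ on each piece. After centering $Z_t$ by its mean $t/2$, the difference $|\varphi_Z(\theta) - \varphi_t(\theta)|$ equals $|\varphi_{Z_t^0}(\theta) - \varphi_{t\sigma^2}(\theta)|$, so the Mann inequality (3.33) from \cite{Mann1996Berry} and the bound (\ref{eq: bound decomposition 2}) used in the proof of Theorem \ref{theo: local CLT Markov} both apply verbatim, with the same constants $c = (1-\lambda)^{-2}$ and $\theta_0 = (1-\lambda)^2\sigma^2/2708$.

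On the low-frequency region $|\theta| \leq \theta_0$, I would square Mann's pointwise bound
\[
|\varphi_Z(\theta) - \varphi_t(\theta)| \leq c\, e^{-t\sigma^2\theta^2/8}\Bigl(683\, t|\theta|^3 + \frac{20|\theta|}{\sigma^2}\Bigr),
\]
expand the square, and apply the substitution $u = \theta \sigma\sqrt{t}/2$, reducing each integral to an explicit Gaussian moment $\int_0^\infty u^{2n} e^{-u^2} du$. The net contribution to $\int_{-\theta_0}^{\theta_0} |\varphi_Z - \varphi_t|^2 d\theta$ is of order $c^2/(\sigma^7 t^{3/2})$, which after substituting $c = (1-\lambda)^{-2}$ and the bound $\sigma \geq (1-\lambda)/(10 d^{3/2})$ from (\ref{eq: bound sigma}) becomes a term of order $d^{21/4}/\bigl((1-\lambda)^{11/2} t^{3/4}\bigr)$ after dividing by $2\pi$ and square rooting.

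On the high-frequency region $\theta_0 \leq |\theta| \leq \pi$, I would use $(a+b)^2 \leq 2a^2 + 2b^2$ together with inequality (\ref{eq: bound decomposition 2}), which gives $|\varphi_Z(\theta)| \leq 1/(e\eta b_t) + \pi M/t$. Substituting the values $M = 10^{10} d^3/(1-\lambda)^4$, the lower bound (\ref{eq: b_t}) for $b_t$, and the lower bound (\ref{eq: bound eta}) for $\eta$ from the proof of Theorem \ref{theo:main}, each of these two summands is of order $d^9/((1-\lambda)^{10} t)$, so $|\varphi_Z|^2 = O(d^{18}/((1-\lambda)^{20} t^2))$ uniformly on this region. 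The Gaussian tail $|\varphi_t(\theta)|^2 = e^{-t\sigma^2\theta^2}$ integrates to an exponentially small quantity that is negligible. Integrating over the region of length $\leq 2\pi$, dividing by $2\pi$, and taking a square root yields a term of order $d^9/((1-\lambda)^{10} t)$, which for $t \geq 1$ is bounded by $d^9/((1-\lambda)^{10} t^{3/4})$.

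Adding the two contributions, the high-frequency term dominates and carries the leading constant; the low-frequency term is strictly smaller in both $d$ and $(1-\lambda)^{-1}$ exponents for $d \geq 1$ and $\lambda \in (0,1)$. The main (non-conceptual) obstacle is bookkeeping: one needs to track the numerical constants arising from the Gaussian moments and from (\ref{eq: b_t}), (\ref{eq: bound sigma}), (\ref{eq: bound eta}) carefully enough to verify that the resulting prefactor is at most $3 \cdot 10^{13}$. Minor numerical simplifications analogous to $\cos x \leq 1 - x^2/5$ for $x \in [-\pi,\pi]$, already used in the proof of Theorem \ref{theo:main}, suffice to close this gap.
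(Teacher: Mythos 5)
Your proposal is correct and follows essentially the same route as the paper's proof: the same split at $\theta_0=(1-\lambda)^2\sigma^2/2708$, Mann's inequality (3.33) squared on the low-frequency part yielding the $c^2/(\sigma^7 t^{3/2})$ term, the bound $|\varphi_Z(\theta)|\leq \frac{1}{e\eta b_t}+\frac{\pi M}{t}$ from the $Z'_t=S_t+Y_t$ decomposition on the high-frequency part, and the final substitution of (\ref{eq: b_t}), (\ref{eq: bound sigma}), (\ref{eq: bound eta}). The only cosmetic differences are that you bound $|\varphi_Z|$ uniformly and multiply by the interval length where the paper keeps the integrals and uses $\sqrt{a+b}\leq\sqrt{a}+\sqrt{b}$, and that your "exponentially small" Gaussian tail should, for uniformity in $\lambda$ and $d$, be handled by the same elementary bound $e^{-\theta_0^2\sigma^2 t}\leq (\theta_0\sigma\sqrt{t})^{-1}$ the paper uses — exactly the kind of minor simplification you already flag.
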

		
		\begin{proof}
			We will break the integral into two parts as we did in the proof of Theorem \ref{theo: local CLT Markov}. Set $\theta_0 = (1-\lambda)^2 \sigma^2 /2708$. For any $\theta \in [-\theta_0,\theta_0]$, inequality (3.33) in \cite{Mann1996Berry} gives 
			\[ |\varphi_Z(\theta)- \varphi_{t}(\theta)| \leq c e^{-t\sigma^2\theta^2/8}\left (683t|\theta|^3 + \frac{20}{\sigma^2}|\theta| \right ),
			\]
			where $c=(1-\lambda)^{-2}$. Therefore, 
			\begin{align}\label{eq: lemma for corollary}
				\int_{-\theta_0}^{\theta_0}  |\varphi_Z(\theta)-\varphi_{t}(\theta)|^2 \, d\theta&\leq 4c^2 \int_0^{\theta_0} e^{-t\sigma^2\theta^2/4}\left (683^2t^2\theta^6 + \frac{400}{\sigma^4}\theta^2 \right ) \, d\theta \\ \nonumber
				&= 4c^2 \frac{2}{\sigma \sqrt{t}} \int_0^{\theta_0 \sigma \sqrt{t}/2} e^{-x^2} \left (\frac{683^2 \cdot 2^6 t^2}{\sigma^6 t^3} x^6 + \frac{400\cdot 2^2}{\sigma^4 \cdot \sigma^2 t} x^2\right ) \, dx\\ \nonumber
				&\leq \frac{2^9c^2}{\sigma^7 t^{3/2}} \left (683^2 \int_0^\infty x^6e^{-x^2} \,dx + 25\int_{0}^{\infty} x^2e^{-x^2}\, dx\right )\leq \frac{5\cdot 10^8c^2}{\sigma^7 t^{3/2}}. \nonumber
			\end{align}
			It remains to study the case $\theta_0<|\theta| \leq \pi$. Observe that
			\[\int_{\theta_0\leq |\theta|\leq \pi} | \varphi_Z(\theta)-\varphi_{t}(\theta)|^2\, d\theta \leq 2\int_{\theta_0\leq |\theta|\leq \pi} |\varphi_Z(\theta)|^2\, d\theta + 2\int_{\theta_0\leq |\theta|\leq \pi} |\varphi_{t}(\theta)|^2\, d\theta.\]
			Consider $Z'_t=S_t+Y_t$, where $S_t$ and $Y_t$ are defined as in (\ref{eq: S_t and Y_t definition}) and let $\varphi_{Z'}$ be its characteristic function. Recall that (\ref{eq: bound decomposition}) gives $|\varphi_{Z'}(\theta)|\leq \frac{1}{e \eta b_t}$ for $\theta_0\leq |\theta|\leq \pi$. Moreover, the proof of Theorem \ref{theo: local CLT Markov} shows that 
			\[ |\varphi_Z(\theta) - \varphi_{Z'}(\theta)| \leq \pi \mathbb{E}_{\pi}(|Z_t-Z'_t|) \leq \frac{\pi M}{t} \quad \forall \, \theta \in [-\pi,\pi],\]
			where $M=\frac{10^{10}d^3}{(1-\lambda)^4}$ in view of Lemma \ref{lem: high probability}. Consequently,
			\[ \int_{\theta_0\leq |\theta|\leq \pi} |\varphi_Z(\theta)|^2\, d\theta \leq \frac{4\pi}{e^2 \eta^2 b_t^2} + \frac{2\pi^3 M^2}{t^2}. \]
			On the other hand, a simple computation shows that the function $f(t)=\sqrt{t}e^{-\alpha t}$ attains its maximum at $t=1/(2\alpha)$ and $f(1/(2\alpha))\leq 1/\sqrt{\alpha}$. Taking $\alpha=\sigma^2\theta_0^2$, this implies that $e^{-t\sigma^2 \theta^2}\leq \frac{1}{\sigma \theta_0 \sqrt{t}}$. Therefore,
			\begin{align*} \int_{\theta_0\leq |\theta|\leq \pi} |\varphi_{t}(\theta)|^2\, d\theta &= 2\int_{\theta_0}^\pi e^{-t\sigma^2 \theta^2} \, d\theta = \frac{2}{\sigma\sqrt{t}} \int_{\theta_0 \sigma \sqrt{t}}^{\pi \sigma \sqrt{t}} e^{-x^2}\, dx \leq \frac{2}{\sigma \sqrt{t}} \frac{1}{2\theta_0 \sigma \sqrt{t}} \int_{\theta_0 \sigma \sqrt{t}}^\infty 2x e^{-x^2}\, dx\\
				&= \frac{1}{\theta_0\sigma^2 t} e^{-\theta_0^2 \sigma^2 t} \leq \frac{1}{\theta_0\sigma^2 t} \frac{1}{\theta_0 \sigma \sqrt{t}} = \frac{1}{\theta_0^2 \sigma^3 t^{3/2}}.
			\end{align*}
			Putting everything together gives
			\[ \|\varphi_Z - \varphi_t\|_2^2 = \frac{1}{2\pi} \int_{-\pi}^{\pi} |\varphi_Z(\theta) - \varphi_t(\theta)|^2\, d\theta \leq \frac{10^8c^2}{\sigma^7 t^{3/2}} + \frac{4}{e^2 \eta^2 b_t^2} + \frac{2\pi^2 M^2}{t^2} + \frac{1}{\pi\theta_0^2 \sigma^3 t^{3/2}}. \]
			Taking square root in both sides and using that $\sqrt{a+b} \leq \sqrt{a}+\sqrt{b}$ yields
			\[\|\varphi_Z - \varphi_t\|_2 \leq \frac{10^4 c}{\sigma^{7/2} t^{3/4}} + \frac{2}{e \eta b_t} + \frac{\sqrt{2}\pi M}{t} + \frac{1}{\pi^{1/2}\theta_0 \sigma^{3/2} t^{3/4}}. \]
			
			Recall that $b_t$, $\sigma$, and $\eta$, are bounded in (\ref{eq: b_t}), (\ref{eq: bound sigma}), and (\ref{eq: bound eta}), respectively. The desired result follows from these bounds and the above inequality after straightforward computations.
		\end{proof}
		
		The proof of Corollary \ref{cor:main} does not optimize the constant, but it shows that we can take
		\[ C_2(\lambda,d) = \frac{10^{14} d^9}{(1-\lambda)^{41/4}}.\]

	\begin{proof}[Proof of Corollary \ref{cor:main}]
		Given $c\geq 1$, set $I_1=\{k \in \mathbb{Z} \colon |k-t/2|\leq c\sqrt{t}+1\}$ and $I_2=\mathbb{Z}\setminus I_1$. Recall that $\phi$ stands for the density function of a standard normal distribution and write $\phi_t$ for the density function of a normal distribution with mean $t/2$ and variance $t\sigma^2$, where $\sigma^2=\lim_{t \to \infty} \Var(Z_t)/t$. First, we bound
		\[ \sum_{k\in \mathbb{Z}} \left |\prob{Z_t=k} - \phi_t(k)\right |
		\]
		by breaking the sum using $I_1$ and $I_2$. To bound the sum on $I_2$ we study the tails of the distributions of $Z_t$ and $\mathcal{N}(t/2,t\sigma^2)$. On one hand,
		\begin{align*}
			\sum_{k \in I_2} \phi_t(k) \leq 2\int_{t/2+c\sqrt{t}}^{\infty} \phi_t(x) \, dx
			=2 \int_{c/\sigma}^\infty \phi(y) \, dy
			\leq \frac{2}{\sqrt{2\pi}} \int_{c/\sigma}^{\infty} \frac{y}{c/\sigma} e^{-y^2/2}\, dy = \frac{2\sigma}{c\sqrt{2\pi}} e^{-\frac{c^2}{2\sigma^2}}.
		\end{align*}	
		On the other hand, Lemma \ref{lem:chernoff} gives
		
		\[ \sum_{k \in I_2} \prob{Z_t=k} = \prob{|Z_t-t/2|> c\sqrt{t}+1} \leq 4e^{-\frac{(c\sqrt{t}+1)^2(1-\lambda)}{20t}} \leq 4e^{-\frac{c^2(1-\lambda)}{20}}.\]
		Consequently, we have 
		\begin{equation}\label{eq: cor sum on I2} \sum_{k\in I_2} \left |\prob{Z_t=k} -\phi_t(k) \right |\leq \frac{2\sigma}{c\sqrt{2\pi}} e^{-\frac{c^2}{2\sigma^2}}+ 4e^{-\frac{c^2(1-\lambda)}{20}}.
		\end{equation}
		We can use the Cauchy-Schwarz inequality to bound the sum on $I_1$ as follows.
		\[  \sum_{k\in I_1} \left |\prob{Z_t=k} - \phi_t(k)\right | \leq |I_1|^{1/2} \left ( \sum_{k\in I_1} \left |\prob{Z_t=k} -\phi_t(k)\right |^2  \right )^{1/2}.\]
		We clearly have $|I_1|\leq 2c\sqrt{t}+3\leq 5c\sqrt{t}$. To estimate the sum on the right hand side, we will use Parseval's identity (see \cite[\S 1.4]{helson2010harmonic}). Define $F \colon [-\pi,\pi] \longrightarrow \mathbb{C}$ by 
		\[ F(\theta)=\sum_{k \in \mathbb{Z}} \varphi_t(\theta+2\pi k) \quad \forall \, \theta \in [-\pi,\pi],\]
		where $\varphi_t$ denotes the ch.f. of the normal distribution with mean $t/2$ and variance $t\sigma^2$. Then the Fourier coefficients of $F$, denoted by $a_k(F)$, satisfy 
		\[ a_k(F)=\frac{1}{2\pi} \widehat{\varphi_t}(k)=\phi_t(k) \quad \forall \, k \in \mathbb{Z},\]
		where $\widehat{\varphi_t}$ is the Fourier transform of $\varphi_t$ as defined in \cite[\S 1.2]{helson2010harmonic}. The first equality follows from (2.4.7) in \cite{helson2010harmonic}, and the second one follows from the inversion formula (see Theorem 3.3.14 in \cite{durret2019probability}). Consequently, Parseval's identity gives 
		\[ \sum_{k\in \mathbb{Z}} \left |\prob{Z_t=k} - \phi_t(k)\right |^2 = \|\varphi_Z - F\|_2^2. \]
		Recall that Lemma \ref{lem: L2 norm of varphiZ and varphit} gives a bound for $\|\varphi_Z-\varphi_t\|_2$, so by triangle inequality it suffices to estimate $\|\varphi_t - F\|_2$. For any $\theta \in [-\pi,\pi]$ we have
		\[ |\varphi_t(\theta) - F(\theta)| \leq \sum_{k\neq 0} |\varphi_t(\theta + 2\pi k)| = \sum_{k\neq 0} e^{-\frac{\sigma^2 t(\theta+2\pi k)^2}{2}} \leq 2\sum_{k=1}^\infty e^{-\sigma^2 t k}= 2e^{-\sigma^2 t} \sum_{k=0}^{\infty} (e^{-\sigma^2 t})^k = \frac{2e^{-\sigma^2 t}}{1-e^{-\sigma^2 t}}.\]
		It is easy to check that $e^x\geq 1+x$ for any $x \geq 0$, or equivalently, $xe^{-x}\leq 1-e^{-x}$ for any $x\geq 0$. In view of the above bound, dividing by $x$ and $1-e^{-x}$ both sides and taking $x=\sigma^2 t$ shows that
		\[|\varphi_t(\theta) - F(\theta)| \leq \frac{2e^{-\sigma^2 t}}{1-e^{-\sigma^2 t}} \leq \frac{2}{\sigma^2 t} \leq \frac{200 d^3}{(1-\lambda)^2} \frac{1}{t} \quad \forall \, \theta  \in [-\pi,\pi].\]
		Therefore, we can apply Lemma \ref{lem: L2 norm of varphiZ and varphit} to get 
		\begin{equation}\label{eq: bound norm L2}
			\|\varphi_Z - F\|_2\leq \|\varphi_Z - \varphi_t\|_2 + \|\varphi_t - F\|_2 \leq \frac{4\cdot 10^{13} d^9}{(1-\lambda)^{10}}\frac{1}{t^{3/4}}.
		\end{equation}
		In view of (\ref{eq: cor sum on I2}) and (\ref{eq: bound norm L2}), we conclude that
		\[ \sum_{k\in \mathbb{Z}} \left |\prob{Z_t=k} - \phi_t(k)\right | \leq \frac{2\sigma}{c\sqrt{2\pi}} e^{-\frac{c^2}{2\sigma^2}}+ 4e^{-\frac{c^2(1-\lambda)}{20}} + |5c\sqrt{t}|^{1/2} \frac{4\cdot 10^{13} d^9}{(1-\lambda)^{10}}\frac{1}{t^{3/4}}.
		\]
		Notice that Proposition \ref{prop: variance} implies $\sigma\leq (1-\lambda)^{-1/2}$. Taking $c=\sqrt{10}(1-\lambda)^{-1/2}\sqrt{\log t}$ above gives 
		\begin{align*} \sum_{k\in \mathbb{Z}} \left |\prob{Z_t=k} - \phi_t(k)\right | \leq \frac{1}{t}+ \frac{4}{\sqrt{t}} + \frac{4}{(1-\lambda)^{1/4}} t^{1/4} \log(t)^{1/4} \frac{4\cdot 10^{13} d^9}{(1-\lambda)^{10}}\frac{1}{t^{3/4}}
		\leq \frac{1.7\cdot 10^{14} d^9}{(1-\lambda)^{41/4}} \frac{\log(t)^{1/4}}{\sqrt{t}}.
		\end{align*}
		Finally, we proceed to study the total variation distance 
		\[\|Z_t - N_d(t/2,t\sigma^2)\|_{TV} = \frac{1}{2}\sum_{k\in \mathbb{Z}} \left |\prob{Z_t=k} -f_{N_d(t/2,t\sigma^2)}(k) \right |.\]
		Using the triangle inequality and (\ref{eq:disc normal}) we can upper bound the previous sum by
		\begin{equation}\label{eq: main cor step 2}\sum_{k\in \mathbb{Z}} \left |\prob{Z_t=k} -\phi_t(k)\right | + \left |1-\frac{1}{D(\mu,t\sigma^2)}\right |\sum_{k \in \mathbb{Z}} \phi_t(k).
		\end{equation}
		A bound for the first term in (\ref{eq: main cor step 2}) is given above. For the second term, we can apply Lemma \ref{lem: bound normalizing constant} to get
		\[\left |1-\frac{1}{D(\mu,t\sigma^2)}\right |\sum_{k \in \mathbb{Z}} \phi_t(k) = \left |1-\frac{1}{D(\mu,t\sigma^2)}\right | D(\mu,t\sigma^2) = |D(\mu,t\sigma^2)-1| \leq \frac{1}{\sqrt{2\pi}} \frac{1}{\sigma\sqrt{t}}.\qedhere\]
	\end{proof}
		
		\section{Proof of Theorem \ref{theo: sticky}}\label{sec:sticky}
		
		This section is devoted to prove Theorem \ref{theo: sticky}. Recall that $Z_t$ and $R_t$ denote the Hamming weights of the random walk on a expander graph and the sticky random walk, respectively. It is easy to show that $R_t$ satisfies a local central limit theorem. Since $R_t$ is concentrated around its mean, this implies convergence in total variation distance to a discretized normal distribution. In view of Corollary \ref{cor:main}, we just need to match the means and variances of $Z_t$ and $R_t$ to obtain the result. The mean and variance of a sticky random walk on $\{0,1\}$ are easy to calculate. We do it in the next lemma.
		
		\begin{lemma}
			Let $(Q_i)$ be the sticky random walk on $\{0,1\}$ with parameter $p \in (-1,1)$ starting from stationary distribution $\pi$, and let $R_t=\sum_{i=0}^{t-1} Q_i$. Then $\mathbb{E}_\pi(R_t)=\frac{t}{2}$ and 
			\[ \Var(R_t)=\frac{p^{t+1} -p(t+1)+t}{2(1-p)^2}-\frac{t}{4}.\]
			In particular, $\lim_{t \to \infty} \Var(R_t)/t=\frac{1}{4}\frac{1+p}{1-p}$.
		\end{lemma}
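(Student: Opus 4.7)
The plan is to compute the mean and variance directly, using the spectral decomposition of the $2\times 2$ transition matrix
\[
P=\frac{1}{2}\begin{pmatrix} 1+p & 1-p \\ 1-p & 1+p \end{pmatrix},
\]
which has eigenvalues $1$ and $p$ with corresponding eigenvectors $f_1=(1,1)$ and $f_2=(1,-1)$. With respect to the uniform stationary distribution $\pi$, these are orthonormal in $\langle\cdot,\cdot\rangle_\pi$.

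First, since $\pi$ is uniform on $\{0,1\}$, each $Q_i$ is Bernoulli with parameter $1/2$, so $\mathbb{E}_\pi(Q_i)=1/2$ and $\Var(Q_i)=1/4$; linearity gives $\mathbb{E}_\pi(R_t)=t/2$ instantly. For the variance I will use
\[
\Var(R_t)=\sum_{i=0}^{t-1}\Var(Q_i)+2\sum_{0\le i<j\le t-1}\Cov(Q_i,Q_j)=\frac{t}{4}+2\sum_{0\le i<j\le t-1}\Cov(Q_i,Q_j),
\]
and compute the covariances via the spectral formula (\ref{eq:spectral form Pf}) applied to the function $f(x)=x$. Since $\langle f,f_1\rangle_\pi=1/2$ and $\langle f,f_2\rangle_\pi=-1/2$, one obtains $P^k f(x)=\tfrac{1}{2}+(x-\tfrac{1}{2})p^k$, so that $\mathbb{E}(Q_j\mid Q_i)=\tfrac{1}{2}+(Q_i-\tfrac{1}{2})p^{j-i}$. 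Conditioning on $Q_i$ and taking expectation yields $\mathbb{E}_\pi(Q_iQ_j)=\tfrac{1}{4}+\tfrac{1}{4}p^{j-i}$ and therefore $\Cov(Q_i,Q_j)=p^{j-i}/4$.

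Substituting back,
\[
\Var(R_t)=\frac{t}{4}+\frac{1}{2}\sum_{k=1}^{t-1}(t-k)p^k.
\]
The remaining task is algebraic: split the sum as $t\sum_{k=1}^{t-1}p^k-\sum_{k=1}^{t-1}kp^k$, evaluate the first piece via the finite geometric series, and obtain the second piece by differentiating $\sum_{k=0}^{t-1}p^k=(1-p^t)/(1-p)$ with respect to $p$. Combining these and simplifying gives $\sum_{k=1}^{t-1}(t-k)p^k=\bigl(p^{t+1}-p(t+1)+t\bigr)/(1-p)^2-t$ after regrouping, which leads directly to the stated closed form.

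For the limit, $p^{t+1}\to 0$ since $|p|<1$, so dividing by $t$ and sending $t\to\infty$ yields $1/(2(1-p))-1/4=(1+p)/(4(1-p))$. The only conceptual step is the use of the spectral decomposition to get the geometric decay of covariances; the rest is routine finite-sum bookkeeping, which I expect to be the most error-prone part but not a serious obstacle.
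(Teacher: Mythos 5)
Your proof is correct and follows essentially the same route as the paper: both compute the two-point correlation $\mathbb{E}_\pi(Q_iQ_j)=\tfrac14+\tfrac14 p^{j-i}$ from powers of the transition matrix (you via the spectral decomposition, the paper by computing $P^k(1,1)=\tfrac{1+p^k}{2}$ directly) and then evaluate $\sum_{k=1}^{t-1}(t-k)p^k$ by the standard geometric-series manipulations. The closed form you state for that sum, and hence the variance formula and the limit, check out.
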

		
		\begin{proof}
		Recall that $Q_0$ is chosen uniformly at random on $\{0,1\}$. Hence, $\mathbb{E}_\pi(R_t)=\sum_{i=0}^{t-1} \mathbb{E}_\pi(Q_i)=\frac{t}{2}$. To calculate the variance,
		consider $P$ the transition matrix of $(Q_i)$, that is,
		\[ P = 
		\begin{pmatrix}
			\frac{1+p}{2} & \frac{1-p}{2} \\
			\frac{1-p}{2} & \frac{1+p}{2} 
		\end{pmatrix}.
		\]
		For any $k \in \mathbb{N}$, we can multiply $P$ by itself $k$ times to obtain that $P^k(1,1)=\frac{1+p^k}{2}$. Consequently, 
		\[ \mathbb{E}_\pi(Q_0 Q_k)=\prob{Q_0=1, Q_k=1}=\frac{1+p^k}{4}.\]
		Using the Markov condition we have
		\begin{align*}
			\mathbb{E}_\pi(R_t^2)&= \sum_{k=0}^{t-1} \mathbb{E}_\pi(Q_k^2) + 2\sum_{k=0}^{t-1} \sum_{j=k+1}^{t-1} \mathbb{E}_\pi(Q_k Q_j)
			= \sum_{k=0}^{t-1} \mathbb{E}_\pi(Q_0^2) + 2\sum_{k=1}^{t-1}(t-k)\mathbb{E}_\pi(Q_0 Q_k)\\
			&= \frac{t}{2} + \frac{1}{2}\sum_{k=1}^{t-1}(t-k)+\frac{1}{2}\sum_{k=1}^{t-1}(t-k)p^k
			=\frac{t^2}{4} - \frac{t}{4} + \frac{1}{2}\sum_{k=0}^{t-1}(t-k)p^k= \frac{p^{t+1} -p(t+1)+t}{2(1-p)^2}-\frac{t}{4} + \frac{t^2}{4}.\qedhere
		\end{align*}
		\end{proof}
	
		The next result shows that the local central limit theorem holds for sticky random walks. We obtain it as an application of Theorem \ref{theo: local CLT Markov}.
		
		\begin{lemma}\label{lemma:localsticky}
			Let $(Q_i)$ be the sticky random walk on $\{0,1\}$ with parameter $p \in (-1,1)$ starting from stationary distribution $\pi$, and let $R_t=\sum_{i=0}^{t-1} Q_i$. For $\sigma^2=\frac{1}{4}\frac{1+p}{1-p}$ we have 
			\[ \left  |\prob{R_t=k} - \frac{1}{\sigma\sqrt{t}}\phi\left (\frac{k-t/2}{\sigma\sqrt{t}}\right )\right | \leq \frac{ 10^{11}}{(1-|p|)^7} \frac{1}{t} \quad \forall \, k \in \mathbb{Z} \quad \forall \, t \in \mathbb{N}.
			\]
		\end{lemma}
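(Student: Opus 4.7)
The plan is to obtain the local central limit theorem as a direct application of Theorem \ref{theo: local CLT Markov} to the sticky random walk, using a decomposition that mirrors (but is significantly simpler than) the one constructed in Section \ref{sec:main result} for general expander graphs. The sticky walk is irreducible and aperiodic on $\{0,1\}$ with stationary distribution $\pi=(1/2,1/2)$; its transition matrix has eigenvalues $1$ and $p$, so the absolute spectral gap is $1-|p|$. The function is $f(q)=q$, giving $\|f\|_\infty=1$ and $\mathbb{E}_\pi(f)=1/2$. Since the initial distribution is $\pi$, the norm $\|(\pi-\mu_0)/\pi\|_{2,\pi}$ in the conclusion of Theorem \ref{theo: local CLT Markov} vanishes.

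For the decomposition $R_t = S_t + Y_t$, I would exploit the symmetric role that state $0$ plays here. First I would set $i_1 < i_2 < \cdots < i_{N_t}$ to be the even times $i\leq t-3$ at which $Q_i = 0$, so that $\mathbb{E}_\pi(N_t) = (1/2)\lfloor t/2\rfloor \approx t/4$. At each such $i_j$, let $U_j = \mathbf{1}\{Q_{i_j+2}=0\}$ be the $2$-cycle indicator; these are iid Bernoulli with parameter $(1+p^2)/2$. Set $\widetilde N_t = \sum_j U_j$, and for each $2$-cycle define $V_j = Q_{i_j+1}$. A direct computation via Bayes' rule shows that, conditional on the $2$-cycle at state $0$, $V_j$ is Bernoulli with parameter $(1-p)^2/(2(1+p^2))$; these $V_j$'s are iid (by the Markov property applied on the disjoint time-windows $[i_j,i_j+2]$) and, crucially, the $2$-cycle decoupling means they are independent of the rest of the walk. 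Finally, I would let $b_t = \lfloor \mathbb{E}_\pi(\widetilde N_t)/4\rfloor$ and define $S_t'$, $S_t$, and $Y_t$ by the exact analog of (\ref{eq: S_t and Y_t definition}), padding with independent fresh Bernoullis $\widetilde V_j$ when $\widetilde N_t < b_t$, so that $S_t$ is always a sum of $b_t$ iid Bernoullis independent of $Y_t$.

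The three hypotheses of Theorem \ref{theo: local CLT Markov} would then be checked in turn. Independence of $S_t$ and $Y_t$ follows from the decoupling argument just described, as in the proof of Theorem \ref{theo:main}. The bound $\mathbb{E}_\pi(|R_t-S_t-Y_t|)\leq M/t$ follows by the same two-step argument used in Lemmas \ref{lem: tilde Nt} and \ref{lem: high probability}: apply Lemma \ref{lem:chernoff} to show $N_t$ concentrates around $\mathbb{E}_\pi(N_t)\gtrsim t$, apply Lemma \ref{lem: binomial} to show $\widetilde N_t$ concentrates given $N_t$, and bound $|R_t - S_t - Y_t|\leq b_t$ deterministically. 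The nonlattice hypothesis follows from Lemma \ref{lem: nonlattice} applied to the Bernoulli $V_j$ with parameter $(1-p)^2/(2(1+p^2))$ and $\theta_0 = (1-|p|)^2\sigma^2/2708$, where $\sigma^2 = (1+p)/(4(1-p))$ is the explicit limiting variance computed just above the lemma.

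The main obstacle, and essentially the only nontrivial bookkeeping, is tracking the dependence on $p$ so that the final constant is $10^{11}/(1-|p|)^7$. Several factors blow up as $|p|\to 1$: $\sigma^2$ scales as $1/(1-|p|)$ near $p=1$, so $1/(\theta_0\sigma^2)$ contributes $(1-|p|)^{-4}$; the Bernoulli parameter of $V_j$ degenerates as $p\to 1$, forcing $\eta$ to be small and $1/(\eta b_t)$ to contribute additional powers of $(1-|p|)^{-1}$; and the spectral-gap factor $(1-|p|)^{-2}$ enters the leading term of Theorem \ref{theo: local CLT Markov}. The asymmetric case $p<0$ would need to be handled by either adapting the construction to restrict attention to the state that gives a well-behaved $V_j$, or by noting that replacing $p$ with $|p|$ in the relevant estimates suffices; in either case the worst-case exponent is the stated $7$ in $(1-|p|)^{-7}$. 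After collecting all of these, the bound produced by Theorem \ref{theo: local CLT Markov} simplifies to the claimed inequality.
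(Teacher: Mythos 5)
Your overall strategy --- apply Theorem \ref{theo: local CLT Markov} to the sticky walk via a $2$-cycle decomposition of the type (\ref{eq: S_t and Y_t definition}) --- is the same as the paper's, but your specific choice of which $2$-cycles to use does not deliver the stated bound, and the place where you assert that ``the worst-case exponent is the stated $7$'' is precisely where the argument fails. If you extract the skipped bit $V_j=Q_{i_j+1}$ from $2$-cycles that \emph{return} to state $0$, then conditionally $V_j$ is Bernoulli with parameter $p_V=\frac{(1-p)^2}{2(1+p^2)}$, and $p_V(1-p_V)=\frac{(1-p^2)^2}{4(1+p^2)^2}\asymp (1-|p|)^2$, which degenerates at both endpoints $p\to\pm 1$; by the $0\leftrightarrow 1$ symmetry of the sticky walk, cycling at state $1$ gives the complementary parameter and the same product, so there is no ``well-behaved state'' to restrict to, and replacing $p$ by $|p|$ does not help either. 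Through Lemma \ref{lem: nonlattice} this costs two extra powers of $(1-|p|)$ in $\eta$: in the worst case $p\to -1$ one has $\sigma^2=\frac{1+p}{4(1-p)}\asymp(1-|p|)$, hence $\theta_0\asymp(1-|p|)^3$, $1-\cos\theta_0\asymp(1-|p|)^6$, and $\eta\asymp(1-|p|)^8$, while your $b_t\asymp t$ carries no factor of $(1-|p|)$ (the cap $\theta_0\leq(1-\lambda)^2\sigma^2/2708$ in Theorem \ref{theo: local CLT Markov} prevents choosing $\theta_0$ larger). The dominant term $\frac{1}{e\eta b_t}$ is therefore of order $(1-|p|)^{-8}t^{-1}$, not $(1-|p|)^{-7}t^{-1}$: your construction proves a local CLT with rate $O(1/t)$, but not the inequality with the constant $10^{11}(1-|p|)^{-7}$ that the lemma asserts, and the claim that the exponent works out to $7$ is stated rather than derived.

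The paper's proof conditions on the complementary event: its $N_t$ counts the times the $2$-step chain \emph{changes} state, $Q_{2i}\neq Q_{2i+2}$, which has probability $\frac{1-p^2}{2}$, and then the skipped bit $Q_{2i+1}$ is exactly Bernoulli$(1/2)$ for every $p$. This moves the degeneracy from $\eta$ (where your version pays it squared, through $p_V(1-p_V)$) to $b_t$ (where it enters only linearly, through $\mathbb{E}_\pi(N_t)\gtrsim(1-|p|)t$), so that $\frac{1}{e\eta b_t}\lesssim(1-|p|)^{-7}t^{-1}$, which is what produces the stated constant. A further simplification you miss: since the events $\{Q_{2i}\neq Q_{2i+2}\}$ are independent, the paper's $N_t$ is exactly binomial, so the concentration step needs only Lemma \ref{lem: binomial}, and the Markov-chain Chernoff bound (Lemma \ref{lem:chernoff}) you invoke is not needed. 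The remaining ingredients of your outline --- independence of $S_t$ and $Y_t$, the bound $\mathbb{E}_\pi(|R_t-S_t-Y_t|)\leq M/t$, and the application of Lemma \ref{lem: nonlattice} --- do match the paper, so the fix is to replace your choice of $2$-cycles by the state-changing ones and redo the bookkeeping.
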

	
		\begin{proof}
			We will decompose $R_t$ into a sum of two independent random variables and use Theorem \ref{theo: local CLT Markov} to obtain the result. Let $(Q_i^2)$ be the $2$-steps Markov chain, that is, the markov chain with transition matrix $P^2$. Let $N_t$ be a random variable that counts the number of times that $Q_i^2\neq Q_{i+1}^2$ within $(Q_0,\ldots,Q_{t-1})$. Let $I=\{i \colon Q_i^2\neq Q_{i+1}^2\}$ and denote its elements as $i_1,\ldots, i_{N_t}$. Since $\prob{Q_i^2\neq Q_{i+1}^2}=\frac{1-p^2}{2}$ independently of previous values of the chain, we deduce that $N_t$ follows a binomial $\operatorname{Bin}(\lfloor \frac{t-1}{2} \rfloor, \frac{1-p^2}{2})$. For every $k \in \{1,\ldots,N_t\}$, let $V_k=Q_{2i_k+1}$ be the bit that we skip to go from $Q_{i_k}^2$ to $Q_{i_k+1}^2$. Then $(V_k)$ is a sequence of independent Bernoulli random variables with parameter $\frac{1}{2}$. Take $b_t=\left \lfloor \frac{\mathbb{E}_\pi(N_t)}{2}\right  \rfloor$ and define the random variables
			\[ S'_t=\sum_{k=1}^{\min\{b_t,N_t\}} V_k, \quad Y_t=R_t - S'_t, \quad S_t=S'_t + \sum_{k=1}^{(b_t-N_t)^+} V'_k,\]
			where $(V'_k)$ are Bernoulli random variables with parameter $\frac{1}{2}$ independent of everything else. We claim that $(Y_t,S_t,R_t)$ satisfies the hypotheses of Theorem \ref{theo: local CLT Markov}. First, notice that $S_t$ and $Y_t$ are independent. In fact, once we know that the $Q_{2i}\neq Q_{2i+2}$, the value of $Q_{2i+1}$ does not affect the rest of the chain. Moreover, we can repeat the proof of Lemma \ref{lem: high probability} to obtain $\mathbb{E}_\pi(|R_t-S_t-Y_t|) \leq M/t$, where
			\[M=\frac{779}{(1-|p|)^2}.\]  
			Indeed, since the function $f(t)=t^2e^{-\alpha t}$ attains its maximum at $t=2\alpha^{-1}$, Lemma \ref{lem: binomial} implies
			\[ \prob{N_t \leq b_t}\leq e^{-\mathbb{E}_\pi(N_t) \varphi(1/2)}\leq e^{-\frac{(1-p^2)\varphi(1/2)}{4}(t-3)} \leq e^{3/4}\frac{4^3e^{-2}}{(1-p^2)^2\varphi(1/2)^2}  \frac{1}{t^2} \leq \frac{779}{(1-|p|)^2}\frac{1}{t^2}.\]
			Hence, the claim follows from the fact that $|R_t-S_t-Y_t|\leq t$ and that $R_t=S_t+Y_t$ if $b_t \leq N_t$. Finally, the eigenvalues of the sticky random walk with parameter $p$ are $1$ and $p$, so in this case we have $1-\lambda = 1-|p|$. Let $\theta_0=(1-|p|)^2\sigma^2/2708$. Since $\sigma^2 \geq (1-|p|)/8$, Lemma \ref{lem: nonlattice} shows that the random variables $V_i$ and $V'_i$ are $\eta$-nonlattice with
			\[\eta \geq  \frac{1}{4}(1-\cos(\theta_0))\geq \frac{1}{4} \frac{\theta_0^2}{5} \geq \frac{(1-|p|)^6}{20\cdot 2708^2 \cdot 64} \geq \frac{(1-|p|)^6}{10^{10}}. \]
			Therefore, Theorem \ref{theo: local CLT Markov} gives
			\begin{equation}\label{eq: sticky markov} \left  |\prob{R_t=k} - \frac{1}{\sigma\sqrt{t}}\phi\left (\frac{k-t/2}{\sigma\sqrt{t}}\right )\right | \leq \left (\pi M +\frac{1}{\theta_0 \sigma^2} + \frac{C_4}{\sigma^4(1-|p|)^2} \right ) \frac{1}{t} + \frac{1}{e\eta} \frac{1}{b_t}.
			\end{equation}
			If $t\leq 22(1-p^2)^{-1}$, then there is nothing to prove. Otherwise, we have
			\[ b_t=\left  \lfloor \frac{\mathbb{E}_\pi(N_t)}{2} \right  \rfloor \geq  \frac{\left \lfloor \frac{t-1}{2} \right \rfloor \frac{1-p^2}{2}}{2} -1 \geq \frac{(t-3)(1-p^2)}{8}-1 \geq \frac{t(1-p^2) - 11}{8} \geq \frac{(1-p^2)}{16}t\geq \frac{(1-|p|)}{16}t. \]
			Substituting all previous bounds in (\ref{eq: sticky markov}) gives
			\[ \left  |\prob{R_t=k} - \frac{1}{\sigma\sqrt{t}}\phi\left (\frac{k-t/2}{\sigma\sqrt{t}}\right )\right | \leq \left ( \frac{779\pi}{(1-|p|)^2} + \frac{10^6}{(1-|p|)^4} + \frac{8 \cdot 10^{10}}{(1-|p|)^7} \right ) \frac{1}{t}\leq \frac{10^{11}}{(1-|p|)^7} \frac{1}{t}.\qedhere
			\]
		\end{proof}
	
		We can repeat word by word the proof of Corollary \ref{cor:main} using $R_t$ instead of $Z_t$ and Lemma \ref{lemma:localsticky} instead of Theorem \ref{theo:main} to produce the following result.
		
		\begin{lemma}\label{lem: sticky tvd}
			Let $(Q_i)$ be the sticky random walk on $\{0,1\}$ with parameter $p \in (-1,1)$ starting from stationary distribution, and let $R_t=\sum_{i=0}^{t-1} Q_i$. For $\sigma^2=\frac{1}{4}\frac{1+p}{1-p}$ we have
			\[ \left \|R_t - N_d (t/2,t\sigma^2) \right \|_{TV} \leq \frac{ 10^{12}}{(1-|p|)^8} \frac{\sqrt{\log t}}{\sqrt{t}}.\]
		\end{lemma}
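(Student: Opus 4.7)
The plan is to follow the proof of Corollary \ref{cor:main} verbatim, substituting $R_t$ for $Z_t$ and Lemma \ref{lemma:localsticky} for Theorem \ref{theo:main} everywhere. The structure has three ingredients: a Chernoff-type tail bound, an $L^2$-bound on the difference of characteristic functions, and the Parseval/Cauchy-Schwarz split between the center and the tails of $\mathbb{Z}$.

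First I would replicate Lemma \ref{lem: L2 norm of varphiZ and varphit} for $R_t$: set $\theta_0=(1-|p|)^2\sigma^2/2708$ and bound $|\varphi_R(\theta)-\varphi_t(\theta)|$ on $[-\theta_0,\theta_0]$ using Mann's inequality (3.33) of \cite{Mann1996Berry}, which applies since the sticky chain has absolute spectral gap $1-|p|$. On the outer region $\theta_0\le|\theta|\le\pi$, I would use the decomposition $R_t=S_t+Y_t$ with $S_t$ a sum of $b_t$ i.i.d.\ Bernoulli$(1/2)$ random variables that was already constructed in the proof of Lemma \ref{lemma:localsticky}; the nonlattice estimate there gives $|\varphi_R(\theta)|\le\frac{1}{e\eta b_t}+\frac{\pi M}{t}$ just as in (\ref{eq: bound decomposition 2}). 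Squaring and integrating produces an $L^2$ bound of the form $\|\varphi_R-\varphi_t\|_2\le K(p)\,t^{-3/4}$ for an explicit constant $K(p)$ depending polynomially on $(1-|p|)^{-1}$.

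With this in hand, I would partition $\mathbb{Z}$ using $I_1=\{k\in\mathbb{Z}:|k-t/2|\le c\sqrt{t}+1\}$ and $I_2=\mathbb{Z}\setminus I_1$. On $I_2$, the Gaussian tail of $\phi_t$ and Lemma \ref{lem:chernoff} (applicable to any Markov chain with spectral gap, so in particular to the sticky walk) give the standard sub-Gaussian tail estimate. On $I_1$, Cauchy-Schwarz reduces the sum to $|I_1|^{1/2}$ times an $\ell^2$ norm, and Parseval's identity on the periodized characteristic function $F(\theta)=\sum_k\varphi_t(\theta+2\pi k)$ converts this $\ell^2$ norm to $\|\varphi_R-F\|_2$, which is controlled by my $L^2$-bound plus the trivial estimate $\|F-\varphi_t\|_\infty=O(1/(\sigma^2 t))$. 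Choosing $c=\sqrt{10}(1-|p|)^{-1/2}\sqrt{\log t}$ balances the tail and center contributions and yields a bound of order $\sqrt{\log t}/\sqrt{t}$. Finally, a triangle inequality together with Lemma \ref{lem: bound normalizing constant} handles the passage from the continuous density $\phi_t$ to the discretized normal $N_d(t/2,t\sigma^2)$, since $\sigma^2\ge(1-|p|)/8\ge1$ once $p$ is bounded appropriately (and if not, the claim is trivial for small $t$).

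The only step that requires any care is tracking the $(1-|p|)^{-1}$ dependence through the $L^2$-bound; everything else is mechanical repetition of the arguments for $Z_t$. The main obstacle, if any, is bookkeeping the exponent $8$ on $(1-|p|)^{-1}$ in the stated constant $10^{12}/(1-|p|)^8$, which comes from combining the exponent $7$ from Lemma \ref{lemma:localsticky} with an extra factor of $(1-|p|)^{-1/2}$ arising from the choice of $c$ and from $\sigma\le(1-|p|)^{-1/2}$; no genuinely new idea is needed beyond what is already present in Section \ref{sec:variance}.
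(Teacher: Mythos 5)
Your proposal is correct in outline, but it takes a more elaborate route through the central region than the paper does. The paper's proof of Lemma \ref{lem: sticky tvd} shares your frame (split $\mathbb{Z}$ into $I_1=\{k:|k-t/2|\le c\sqrt t+1\}$ and its complement, control the tails by the Gaussian tail and Lemma \ref{lem:chernoff}, pass from $\phi_t$ to $N_d(t/2,t\sigma^2)$ via Lemma \ref{lem: bound normalizing constant}), but on $I_1$ it does \emph{not} redo the Cauchy--Schwarz/Parseval/$L^2$ characteristic-function analysis: it simply sums the pointwise local CLT bound of Lemma \ref{lemma:localsticky} over the $|I_1|\le 5c\sqrt t$ integers, getting $5C(p)c/\sqrt t$ with $C(p)=10^{11}(1-|p|)^{-7}$, and then takes $c\asymp(1-|p|)^{-1/2}\sqrt{\log t}$. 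That cruder step is precisely what produces the $\sqrt{\log t}$ factor (and the extra $(1-|p|)^{-1}$, giving the exponent $8$), and it is why the paper remarks after (\ref{eq: sticky}) that the power of the logarithm could likely be improved. Your plan instead re-derives an analogue of Lemma \ref{lem: L2 norm of varphiZ and varphit} for the sticky walk (Mann's inequality on $[-\theta_0,\theta_0]$, the decomposition $R_t=S_t+Y_t$ with its nonlattice/$b_t$/$M$ bounds from the proof of Lemma \ref{lemma:localsticky} on the outer region) and then uses Cauchy--Schwarz plus Parseval on $I_1$; all ingredients are indeed available, and this buys a stronger $\log(t)^{1/4}/\sqrt t$ center contribution, which in particular implies the stated bound once you track the $(1-|p|)$-powers through $\eta$, $b_t$, and $\sigma$. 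So: the paper's argument is shorter, using the already-proved local CLT as a black box; yours is longer but sharper in the logarithm.

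One small correction: Lemma \ref{lem: bound normalizing constant} is applied with variance $t\sigma^2$, so the relevant condition is $t\sigma^2\ge 1$, not $\sigma^2\ge(1-|p|)/8\ge 1$ (the latter inequality never holds, since $1-|p|\le 1$). This is harmless because $\sigma^2\ge(1-|p|)/8$ makes $t\sigma^2\ge 1$ once $t\ge 8/(1-|p|)$, and for smaller $t$ the claimed bound exceeds $1$ and is trivial, as your fallback remark anticipates; just state it that way.
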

		
		\begin{proof}
			As in the proof of Corollary \ref{cor:main}, take $c\geq 1$ and let $I_1=\{u \in \mathbb{Z} \colon |u-t/2|\leq c\sqrt{t}+1\}$ and $I_2=\mathbb{Z}\setminus I_1$. First, we bound
			\[ \sum_{k\in \mathbb{Z}} \left |\prob{R_t=k} - t^{-1/2}\sigma^{-1}\phi\left (\frac{k-t/2}{t^{1/2}\sigma}\right )\right |
			\]
			by breaking the sum using $I_1$ and $I_2$. Write $C(p)=10^{11} (1-|p|)^{-7}$. Lemma \ref{lemma:localsticky} gives
			\[ \sum_{k \in I_1} \left |\prob{R_t=k} - t^{-1/2}\sigma^{-1}\phi\left (\frac{k-t/2}{t^{1/2}\sigma}\right )\right | \leq (2c\sqrt{t} +3) \frac{C(p)}{t} \leq 5C(p) \frac{c}{\sqrt{t}}.\]
			To bound the sum on $I_2$ we study the tails of the distributions of $R_t$ and $\mathcal{N}(t/2,t\sigma^2)$. Since we can also apply Lemma \ref{lem:chernoff} to study the tail of $R_t$, from the proof of Corollary \ref{cor:main} it follows that
			\[ \sum_{k\in I_2} \left |\prob{R_t=k} - t^{-1/2}\sigma^{-1}\phi\left (\frac{k-t/2}{t^{1/2}\sigma}\right )\right | \leq \frac{2\sigma}{c\sqrt{2\pi}} e^{-\frac{c^2}{2\sigma^2}} + 4e^{-\frac{c^2(1-|p|)}{20}}.\]
			Write $\alpha=\max\{\sigma, \sqrt{10}(1-|p|)^{-1/2}\}$ and take $c= \alpha\sqrt{\log t}$. From the previous bounds we obtain
			\[\sum_{k\in \mathbb{Z}} \left |\prob{R_t=k} - t^{-1/2}\sigma^{-1}\phi\left ( \frac{k-t/2}{t^{1/2}\sigma}\right )\right | \leq \left (5C(p)\alpha \sqrt{\log t}+ \frac{\sqrt{2}}{\sqrt{\pi}} + 4\right ) \frac{1}{\sqrt{t}}.
			\]
			Recall that the total variation distance between $R_r$ and $N_d(t/2,t\sigma^2)$ is given by 
			\[\|R_t - N_d(t/2,t\sigma^2)\|_{TV} = \frac{1}{2}\sum_{k\in \mathbb{Z}} \left |\prob{R_t=k} -f_{N_d(t/2,t\sigma^2)}(k) \right |.\]
			We can use the triangle inequality and repeat the argument in the proof of Corollary \ref{cor:main} to bound (\ref{eq: main cor step 2}) to obtain that
			\[ \sum_{k\in \mathbb{Z}} \left |\prob{R_t=k} -f_{N_d(t/2,t\sigma^2)}(k) \right | \leq \left (5C(p)\alpha \sqrt{\log t}+ \frac{\sqrt{2}}{\sqrt{\pi}} + 4\right ) \frac{1}{\sqrt{t}} + \frac{1}{\sqrt{2\pi}} \frac{1}{\sigma\sqrt{t}}.\]
			We clearly have $\sigma\leq (1-|p|)^{-1}$, whence $\alpha \leq \sqrt{10} (1-|p|)^{-1}$. We conclude that
			\[ \|R_t - N_d(t/2,t\sigma^2)\|_{TV} \leq \frac{1}{2} \left (5C_1(\lambda,d)\alpha \sqrt{\log t}+ \frac{\sqrt{2}}{\sqrt{\pi}} + 4 + \frac{1}{\sigma\sqrt{2\pi}}\right )\frac{1}{\sqrt{t}} \leq \frac{3\sqrt{10}C(p)}{1-|p|} \frac{\sqrt{\log t}}{\sqrt{t}}.\qedhere\]
		\end{proof}

		The proof of Theorem \ref{theo: sticky} is now immediate. Although we do not optimize the constant, we show that Theorem \ref{theo: sticky} holds with
		\[ C_3(\lambda,d) = \frac{2\cdot 10^{23}d^{24}}{(1-\lambda)^{16}}.\]
		
		\begin{proof}[Proof of Theorem \ref{theo: sticky}]
			We want to write the bound of Lemma \ref{lem: sticky tvd} in terms of $\lambda$ and $d$. Write $\sigma^2= \lim_{t \to \infty} \Var(Z_t)/t$ and recall that $p$ is chosen so that $\sigma^2=\frac{1}{4} \frac{1+p}{1-p}$, that is, 
			\[ p = \frac{4\sigma^2 - 1}{1+4\sigma^2}.\]
			If $\sigma^2 \in (0,1/4)$, then we have $|p|\leq 1-4\sigma^2$, and if $\sigma^2 \in (1/4,\infty)$, then $|p| \leq 1 - \frac{1}{4\sigma^2}$. Therefore,
			\[ 1-|p| \geq \min\left \{4\sigma^2, \frac{1}{4\sigma^2}\right \}.\]
			Recall that the bound (\ref{eq: bound sigma}) gives $4\sigma^2 \geq \frac{(1-\lambda)^2}{25d^3}$. Moreover, Propositon \ref{prop: variance} implies $4\sigma^2\leq 2(1-\lambda)^{-1}$, so we have
			\[ 1-|p| \geq \frac{(1-\lambda)^2}{25d^3}.
			\]
			Therefore, Lemma \ref{lem: sticky tvd} gives
			\[ \left \|R_t - N_d (t/2,t\sigma^2) \right \|_{TV} \leq \frac{10^{12}\cdot 25^8d^{24}}{(1-\lambda)^{16}} \frac{\sqrt{\log t}}{\sqrt{t}} \leq \frac{1.6 \cdot 10^{23}d^{24}}{(1-\lambda)^{16}}\frac{\sqrt{\log t}}{\sqrt{t}} .\]
			The result now follows from Corollary \ref{cor:main} and the triangle inequality.
		\end{proof}

		\section{Generalization to all labellings}\label{sec:extension}
		
		In this section, we extend Theorem \ref{theo:main} and Corollary \ref{cor:main} to allow unbalanced labellings. First, we need a generalization of Lemma \ref{lem:A_j or B_j}. Recall that $A=\{x \in V \colon \Val(x)=0\}$ and $B=\stcomp{A}$.
		\begin{lemma}\label{lem:A_j or B_j extended}
			Let $G$ be a $d$-regular $\lambda$-expander graph with $n$ vertices. Fix a labelling $\operatorname{val}\colon V \longrightarrow \{0,1\}$ on $G$ with $\mathbb{E}_\pi(\operatorname{val})=\alpha \in [0,1]$. Write $\delta=\frac{1}{4}(1-\lambda)^2\alpha(1-\alpha)$. Then there is $k^* \in \{1,\ldots,d-1\}$ such that either 
			$$|A_{k^*}|\geq \frac{\delta(1-\alpha)n}{d-1}\quad \mbox{or} \quad |B_{k^*}|\geq \frac{\delta\alpha n}{d-1}.$$
		\end{lemma}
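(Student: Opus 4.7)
The plan is to adapt the proof of Lemma \ref{lem:A_j or B_j} via contradiction, first reducing to the case $\alpha \leq 1/2$. The statement is invariant under the label swap $\Val \mapsto 1-\Val$, which interchanges $A \leftrightarrow B$, sends $(A_k,B_k) \mapsto (B_{d-k},A_{d-k})$, replaces $\alpha$ by $1-\alpha$, and keeps $\delta = (1-\lambda)^2\alpha(1-\alpha)/4$ unchanged, so the conclusion transports with $k^* \mapsto d-k^*$. Assuming $\alpha \leq 1/2$, I would suppose for contradiction that $|A_k| < \delta(1-\alpha)n/(d-1)$ and $|B_k| < \delta\alpha n/(d-1)$ for every $k \in \{1,\ldots,d-1\}$; summing gives $|\mathrm{Bad}| < \delta n$, where $\mathrm{Bad} = \bigcup_{k=1}^{d-1}(A_k \cup B_k)$.

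I would then apply Lemma \ref{lem:expander mixing lemma} twice. The pair $F_1=A,\; F_2=B$ gives $|E(A,B)| \geq (1-\lambda)d\alpha(1-\alpha)n$; since such edges leave $A$ from $A\setminus A_d$ and enter $B$ at $B\setminus B_0$, subtracting $\delta|A|$ and $\delta|B|$ respectively and using $\delta \leq (1-\lambda)\alpha(1-\alpha)/4$ yields $|A_0|,\; |B_d| \geq \tfrac{3}{4}(1-\lambda)\alpha(1-\alpha)n$. The pair $F_1 = F_2 = A$ gives $|E(A,A)| \geq d(1-\alpha)n[(1-\alpha)-\lambda\alpha]$; for $\alpha \leq 1/2$ one has $(1-\alpha)-\lambda\alpha \geq (1-\alpha)(1-\lambda)$, producing $|A_d| \geq \tfrac{3}{4}(1-\alpha)^2(1-\lambda)n$.

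The final step is to set $F_1 = A_d \cup B_0$ and $F_2 = V \setminus F_1 = A_0 \cup B_d \cup \mathrm{Bad}$ and exploit the structural fact that neighbors of $A_d$ lie in $A$ but cannot reach $A_0$ (whose vertices have all neighbors in $B$), so edges from $A_d$ to $F_2$ land in $A \cap \mathrm{Bad}$, and symmetrically edges from $B_0$ land in $B \cap \mathrm{Bad}$. This gives $|E(F_1,F_2)| \leq d|\mathrm{Bad}| < d\delta n$. Corollary \ref{cor:expander mixing lemma} then forces $\min\{|F_1|,|F_2|\} < 2\delta n/(1-\lambda) = (1-\lambda)\alpha(1-\alpha)n/2$. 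But $|F_2| \geq |A_0|+|B_d| \geq \tfrac{3}{2}(1-\lambda)\alpha(1-\alpha)n$ exceeds the threshold trivially, and $|F_1| \geq |A_d| \geq \tfrac{3}{4}(1-\alpha)^2(1-\lambda)n$ exceeds it precisely when $3(1-\alpha) \geq 2\alpha$, i.e., $\alpha \leq 3/5$, which is ensured by the reduction $\alpha \leq 1/2$.

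The main obstacle is the asymmetry of the unbalanced case: the $F_1 = F_2 = A$ application yields a useful lower bound on $|A_d|$ only when $(1-\alpha) > \lambda\alpha$, which is precisely why the WLOG reduction to $\alpha \leq 1/2$ is needed. The delicate arithmetic is that $\delta$ is chosen proportional to $\alpha(1-\alpha)$ (not a constant times $(1-\lambda)^2$): this keeps the threshold $(1-\lambda)\alpha(1-\alpha)n/2$ just below the weakest lower bound $\tfrac{3}{4}(1-\alpha)^2(1-\lambda)n$, producing the contradiction.
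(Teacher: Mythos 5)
Your argument is correct; all the numerical thresholds check out (the bounds $|A_0|,|B_d|\geq \tfrac34(1-\lambda)\alpha(1-\alpha)n$, $|A_d|\geq\tfrac34(1-\lambda)(1-\alpha)^2n$, the cutoff $\min\{|F_1|,|F_2|\}<\tfrac12(1-\lambda)\alpha(1-\alpha)n$ forced by $|E(F_1,F_2)|<d\delta n$, and the condition $\alpha\leq 3/5$), and the label-swap symmetry does transport the conclusion via $(A_k,B_k)\mapsto(B_{d-k},A_{d-k})$ with $\delta$ unchanged. The skeleton is the same as the paper's (contradiction, the classes $A_j,B_j$, Corollary \ref{cor:expander mixing lemma} applied to $(A,B)$, $(A,A)$, and a final cut separating $\{A_0,A_d,B_0,B_d\}$ from the ``middle'' classes), but you handle the unbalanced asymmetry differently: the paper never reduces to $\alpha\leq 1/2$; instead it also runs $F_1=F_2=B$, accepts that the resulting lower bound on $|B_0|$ alone may be vacuous (it is $(\alpha-(1-\alpha)\lambda-\delta)\alpha n$, possibly negative), and rescues the argument by bounding the \emph{sum} $|A_d|+|B_0|\geq\tfrac12(1-\lambda)\alpha(1-\alpha)n$ via the identity $(1-\alpha)^2+\alpha^2-2\lambda\alpha(1-\alpha)\geq 2(1-\lambda)\alpha(1-\alpha)$. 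Your WLOG reduction replaces that algebraic step and lets you dispense with $|B_0|$ entirely, at the price of the (correctly verified) symmetry bookkeeping. The final contradiction is also organized dually: the paper takes $F_1=A_0\cup B_d$, lower-bounds $|E(F_1,F_2)|\geq d\delta n$ by mixing, and case-splits to contradict the smallness of the middle classes, whereas you take $F_1=A_d\cup B_0$, upper-bound $|E(F_1,F_2)|\leq d|\mathrm{Bad}|<d\delta n$ by the same structural observation (neighbors of $A_d$, resp.\ $B_0$, in the complement must lie in the middle classes), and contradict the mixing lower bound through the sizes of $F_1$ and $F_2$; these are equivalent in substance, and your version slightly streamlines the case analysis.
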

		
		\begin{proof}
			We will follow the argument in the proof of Lemma \ref{lem:A_j or B_j}. Suppose the statement is false. Then
			\begin{equation}\label{eq:|A_0|+|A_d| extended} 
				|A_0|+|A_d|>(1-\delta)(1-\alpha)n \quad \mbox{and} \quad |B_0|+|B_d|> (1-\delta)\alpha n.
			\end{equation}
			Take $F_1=A$ and $F_2=B$. Corollary \ref{cor:expander mixing lemma} gives $|E(A,B)|\geq \frac{1}{2}(1-\lambda) d \alpha(1-\alpha) n$. Notice also that 
			$$|E(A,B)|\leq d |A\setminus A_d|\leq d(|A_0|+\delta|A|).$$
			Therefore, we obtain
			\begin{equation}\label{eq:Ad extended} 
				|A_0|\geq \frac{1}{2}(1-\lambda)\alpha(1-\alpha) n - \delta(1-\alpha)n=\frac{1}{2}(1-\lambda)\alpha(1-\alpha) n \left (1-\frac{1}{2}(1-\lambda)(1-\alpha)\right ) \geq \frac{1}{4}(1-\lambda)\alpha(1-\alpha) n.
			\end{equation}
			A completely analogous argument replacing $A_d$ with $B_0$ and $A_0$ with $B_d$ gives
			\begin{equation}\label{eq:B0 extended} 
				|B_d|\geq \frac{1}{2}(1-\lambda)\alpha(1-\alpha) n - \delta \alpha n \geq \frac{1}{4}(1-\lambda) \alpha(1-\alpha) n.
			\end{equation}
			Next, consider $F_1=F_2=A$. Then the expander mixing lemma \ref{lem:expander mixing lemma} gives 
			\[ |E(A,A)|\geq (1-\alpha)^2dn - \alpha(1-\alpha)\lambda dn = ((1-\alpha)-\alpha\lambda)(1-\alpha)dn.\]
			Moreover, $|E(A,A)|\leq d|A\setminus A_0| \leq d(|A_d|+\delta|A|)$, from where we deduce that
			\[ |A_d|\geq ((1-\alpha)-\alpha\lambda -\delta)(1-\alpha)n.\]
			We can take $F_1=F_2=B$ and repeat the previous argument, replacing $A_d$ with $B_0$ and $A_0$ with $B_d$, to obtain that
			\[ |B_0|\geq (\alpha -(1-\alpha)\lambda -\delta) \alpha n.\]
			Adding these bounds gives
			\begin{align*}
				|A_d|+|B_0|&\geq \left ((1-\alpha)^2 -2\alpha(1-\alpha)\lambda + \alpha^2 \right )n - \delta n = ((1-\alpha) - \alpha)^2 n + 2\alpha(1-\alpha)(1-\lambda)n -\delta n\\
				&\geq 2\alpha(1-\alpha)(1-\lambda)n - \delta n \geq \frac{1}{2} \alpha(1-\alpha)(1-\lambda)n.
			\end{align*}
			Finally, take $F_1=A_0\cup B_d$ and $F_2=\stcomp{F_1}$. In view of the above bound, (\ref{eq:Ad extended}), and (\ref{eq:B0 extended}), we get $\min\{|F_1|,|F_2|\}\geq \frac{1}{2}(1-\lambda)\alpha(1-\alpha) n$. Hence, Corollary \ref{cor:expander mixing lemma} gives
			\begin{align*} |E(F_1,F_2)|&\geq \frac{1}{4}d (1-\lambda)^2\alpha(1-\alpha) n=d\delta n.
			\end{align*}
			Therefore, we must have either $|E(A_0,F_2)|\geq d \delta\alpha n$ or $|E(B_d,F_2)|\geq d \delta(1-\alpha) n$. In the first case, for any $e=\{x,y\} \in E(A_0,F_2)$ with $x \in A_0$, we must have $\Val(y)=1$. Thus, $y \in B\setminus B_d$. Moreover, $y$ cannot belong to $B_0$ since it is adjacent to $x$ and $\Val(x)=0$. Therefore, $y \in B\setminus(B_0\cup B_d)$. Consequently,
			\[ |B\setminus(B_0\cup B_d)|\geq \frac{|E(A_0,F_2)|}{d}\geq  \delta \alpha n,\]
			which contradicts (\ref{eq:|A_0|+|A_d| extended}). Analogously, $|E(B_d,F_2)|\geq d \delta(1-\alpha) n$ also leads to a contradiction.\qedhere
		\end{proof}
		
		To extend our main result for unbalanced labellings we just need to repeat its proof using Lemma \ref{lem:A_j or B_j extended} instead of Lemma \ref{lem:A_j or B_j}.
	
			\begin{theorem}\label{theo:main extension}
			Let $G$ be a $d$-regular $\lambda$-expander graph with $\lambda <1$. Let $(X_i)$ be the simple random walk on $G$ with uniform initial distribution $\pi$, fix a labelling $\operatorname{val}\colon V \longrightarrow \{0,1\}$ with $\alpha=\mathbb{E}_\pi(\Val) \in (0,1)$, and let $Z_t= \sum_{i=0}^{t-1} \Val(X_i)$ and $\sigma^2=\lim_{t\to \infty} \Var(Z_t)/t$. There is a constant $C_5(\lambda,d,\alpha)$ depending on $\lambda$, $d$, and $\alpha$ such that
			\[ \left |\prob{Z_t =k} - t^{-1/2}\sigma^{-1}\phi\left (\frac{k-\alpha t}{t^{1/2}\sigma}\right )\right | \leq C_5(\lambda,d,\alpha) \frac{1}{t} \quad \forall \, k \in \mathbb{Z} \quad \forall\, t \in \mathbb{N}. \]
		\end{theorem}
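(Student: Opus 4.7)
The proof will follow the structure of Section~\ref{sec:main result}, with Lemma~\ref{lem:A_j or B_j} replaced by Lemma~\ref{lem:A_j or B_j extended}; the only delicate point is tracking how the constants depend on $\alpha$.

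First I would apply Lemma~\ref{lem:A_j or B_j extended} with $\delta = \frac{1}{4}(1-\lambda)^2\alpha(1-\alpha)$ to produce $k^*\in\{1,\dots,d-1\}$ such that either $|A_{k^*}|\geq \frac{\delta(1-\alpha)n}{d-1}$ or $|B_{k^*}|\geq \frac{\delta\alpha n}{d-1}$. By swapping the roles of $A$ and $B$ if necessary, I may assume the first case holds. Then, exactly as in Section~\ref{sec:main result}, I define $N_t$ to be the number of visits of the $2$-step chain $(X_i^2)$ to $A_{k^*}$ within the first $\lfloor t/2\rfloor-1$ steps, $\widetilde{N}_t$ to be the number of such visits that initiate a $2$-cycle ($X_{i_j}=X_{i_j+2}$), and $S_t,S'_t,Y_t$ by formulas analogous to \eqref{eq: S_t and Y_t definition}, using Bernoulli variables $V_i$ with parameter $(d-k^*)/d$ recording whether the $2$-cycle visits a $1$-labelled vertex.

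Next I would rerun the concentration estimates. The bound \eqref{eq: bound Nt} becomes
\[
\mathbb{E}_\pi(N_t) \geq \frac{(1-\lambda)^2\alpha(1-\alpha)}{8(d-1)}(t-2),
\]
after which Lemma~\ref{lem: tilde Nt} and Lemma~\ref{lem: high probability} transfer verbatim (the binomial-Chernoff step and the expander-Chernoff step both go through with $\delta$ replaced by the unbalanced $\delta$), yielding $\mathbb{E}_\pi(|Z_t-S_t-Y_t|)\leq M/t$ with $M$ polynomial in $(1-\lambda)^{-1}$, $d$ and $[\alpha(1-\alpha)]^{-1}$. Since $k^*\in\{1,\dots,d-1\}$, Lemma~\ref{lem: nonlattice} still gives the nonlattice bound $\eta\geq \frac{d-1}{d^2}(1-\cos\theta_0)$, independent of $\alpha$.

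Then I would apply Theorem~\ref{theo: local CLT Markov} to the function $\Val$ (noting $\mathbb{E}_\pi(\Val)=\alpha$, so the centering becomes $\alpha t$ as required). The only remaining input is a lower bound on $\sigma^2$: repeating the variance lower bound via the decomposition gives
\[
\sigma^2 \geq \liminf_t \frac{b_t}{t}\cdot\frac{k^*(d-k^*)}{d^2}\cdot \mathbb{P}_\pi(\widetilde{N}_t\geq b_t)\gtrsim \frac{(1-\lambda)^2\alpha(1-\alpha)}{d^4},
\]
which in turn controls $\theta_0=(1-\lambda)^2\sigma^2/(2708)$ and $\eta$ from below in terms of $\lambda$, $d$, and $\alpha(1-\alpha)$. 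Substituting into Theorem~\ref{theo: local CLT Markov} as in \eqref{eq: theorem 4.1 applied} and collecting powers yields the desired bound with an explicit $C_5(\lambda,d,\alpha)$ that is polynomial in $(1-\lambda)^{-1}$, $d$, and $[\alpha(1-\alpha)]^{-1}$.

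The main (modest) obstacle is bookkeeping: every one of the intermediate estimates in Section~\ref{sec:main result} used $\delta=(1-\lambda)^2/3$, whereas now $\delta=\tfrac14(1-\lambda)^2\alpha(1-\alpha)$, and both the mixing-lemma step and the variance lower bound pick up additional factors of $\alpha(1-\alpha)$. Since $\alpha\in(0,1)$, these factors are positive and yield a finite (though $\alpha$-dependent) constant; the blow-up of $C_5(\lambda,d,\alpha)$ as $\alpha\to 0$ or $\alpha\to 1$ is expected, as $Z_t$ becomes degenerate in those limits.
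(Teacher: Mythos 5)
Your proposal follows essentially the same route as the paper's own proof: rerun the decomposition of Section~\ref{sec:main result} with Lemma~\ref{lem:A_j or B_j extended} in place of Lemma~\ref{lem:A_j or B_j}, re-derive the $\alpha$-dependent lower bounds on $b_t$, $\sigma^2$, $\theta_0$, and $\eta$, and feed them into Theorem~\ref{theo: local CLT Markov} with centering $\alpha t$. The only discrepancies are harmless bookkeeping (e.g.\ your intermediate bounds carry $\alpha(1-\alpha)$ where the paper's choice of $A_{k^*}$ gives $\alpha(1-\alpha)^2$, as in (\ref{eq: bt with alpha})--(\ref{eq: sigma with alpha})), which do not affect the qualitative conclusion that a finite constant $C_5(\lambda,d,\alpha)$ exists.
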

		
		Although the proof of Theorem \ref{theo:main extension} does not optimize the constant, it shows that we can take
		\[ C_5(\lambda,d,\alpha)= \frac{4\cdot 10^{12} d^9}{\alpha^3 (1-\alpha)^6 (1-\lambda)^{10}}.\]
	
		\begin{proof}
			Let $\delta=\frac{1}{4}(1-\lambda)^2 \alpha (1-\alpha)$. Lemma \ref{lem:A_j or B_j extended} guarantees that there is $k^* \in \{1,\ldots,d-1\}$ such that either 
			$$|A_{k^*}|\geq \frac{\delta(1-\alpha)n}{d-1}\quad \mbox{or} \quad |B_{k^*}|\geq \frac{\delta\alpha n}{d-1}.$$
			By symmetry, we may assume that $|A_{k^*}|\geq \frac{\delta(1-\alpha)n}{d-1}$. Fix $t \in \mathbb{N}$ and consider the random variables  $\widetilde{N}_t$, $V_i$, $\widetilde{V}_i$, $S'_t$, $S_t$, and $Y_t$ appearing in (\ref{eq: S_t and Y_t definition}). As the proof of Theorem \ref{theo:main} showed, the hypotheses of Theorem \ref{theo: local CLT Markov} are satisfied, so for any $k \in \mathbb{Z}$ we have 
			\begin{equation}\label{eq: theorem 4.1 applied extension} \left  |\prob{Z_t=k} - \frac{1}{\sigma\sqrt{t}}\phi\left (\frac{k-\alpha t}{\sigma\sqrt{t}}\right )\right | \leq \left (\pi M +\frac{1}{\theta_0 \sigma^2} + \frac{C_4}{\sigma^4(1-\lambda)^2} \right )\frac{1}{t} + \frac{1}{e\eta} \frac{1}{b_t}.
			\end{equation}
			The value of $M$ is the same as in the proof of Theorem \ref{theo:main}. In contrast, $b_t$ now depends on $\alpha$, so we need to find a new bound for it. Recall that $b_t=\lfloor \mathbb{E}_\pi(\widetilde{N}_t)/4 \rfloor$, where
			\[
			\frac{\mathbb{E}_\pi(\widetilde{N}_t)}{4}= \frac{\mathbb{E}_\pi(N_t)}{4d} = \frac{ \lfloor t/2 \rfloor \pi(A_{k^*})}{4d} \geq \frac{(t-2)\delta(1-\alpha)}{8d(d-1)}\geq \frac{t\delta(1-\alpha)-2}{8d(d-1)}.
			\]
			If $t \leq 8\delta^{-1}(1-\alpha)^{-1}d^3$, then there is nothing to prove. Otherwise, a simple computation shows that
			\begin{equation}\label{eq: bt with alpha}
				b_t=\left \lfloor \frac{\mathbb{E}_\pi(\widetilde{N}_t)}{4}\right  \rfloor \geq \frac{\mathbb{E}_\pi(\widetilde{N}_t)}{4} -1 \geq \frac{t\delta(1-\alpha)-4
					8d^2}{4
					8d(d-1)} \geq \frac{\delta(1-\alpha)}{8d^2}t=\frac{\alpha(1-\alpha)^2(1-\lambda)^2}{32d^2}t.
			\end{equation}
			Thus, the argument used to get the bound (\ref{eq: bound sigma}) for $\sigma$ shows that
			\begin{equation}\label{eq: sigma with alpha} \sigma^2 \geq \frac{b_t}{t} \frac{d-1}{d^2} \geq \frac{\alpha(1-\alpha)^2(1-\lambda)^2}{64d^3}.
			\end{equation}
			Finally, to obtain a bound for $\eta$ take 
			\[	\theta_0= \frac{(1-\lambda)^2\sigma^2}{2708}.\]
			Then following the proof of Theorem \ref{theo:main} we get
			\begin{equation}\label{eq: eta with alpha}
				\eta \geq \frac{d-1}{d^2}(1-\cos(\theta_0)) \geq \frac{1}{2d} \frac{\theta_0^2}{5}\geq \frac{1}{10\cdot 64^2 \cdot 2708^2}\frac{\alpha^2(1-\alpha)^4(1-\lambda)^8}{d^7} \geq \frac{\alpha^2(1-\alpha)^4(1-\lambda)^8}{3.1\cdot 10^{11}d^7}.
			\end{equation}
			After substituting the previous bounds in (\ref{eq: theorem 4.1 applied extension}), a simple computation gives
			\[\left  |\prob{Z_t=k} - \frac{1}{\sigma\sqrt{t}}\phi\left (\frac{k-\alpha t}{\sigma\sqrt{t}}\right )\right | \leq \frac{4\cdot 10^{12} d^9}{\alpha^3 (1-\alpha)^6 (1-\lambda)^{10}} \frac{1}{t}.\qedhere\]
			
		\end{proof}
	
		Similarly, we can repeat word by word the proof of Corollary \ref{cor:main} to extend it for unbalanced labellings.
		
		\begin{corollary}\label{cor:main extension}
			Let $G$ be a $d$-regular $\lambda$-expander graph with $\lambda <1$. Let $(X_i)$ be the simple random walk on $G$ with uniform initial distribution $\pi$, fix a labelling $\operatorname{val}\colon V \longrightarrow \{0,1\}$ with $\alpha=\mathbb{E}_\pi(\Val) \in (0,1)$, and let $Z_t= \sum_{i=0}^{t-1} \Val(X_i)$ and $\sigma^2=\lim_{t\to \infty} \Var(Z_t)/t$. There is a constant $C_6(\lambda,d,\alpha)$ depending on $\lambda$, $d$, and $\alpha$ such that
			\[ \left \|Z_t - N_d(t/2,t\sigma^2) \right \|_{TV}\leq C_6(\lambda,d,\alpha) \frac{\log (t)^{1/4}}{\sqrt{t}} \quad \forall \, t \geq 2.\]
		\end{corollary}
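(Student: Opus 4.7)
The plan is to mirror the proof of Corollary \ref{cor:main} verbatim, replacing every appearance of the balanced-case mean $t/2$ by $\alpha t$, and every appeal to Theorem \ref{theo:main} by the corresponding appeal to Theorem \ref{theo:main extension}. Let $\phi_t$ denote the density of $\mathcal{N}(\alpha t,t\sigma^2)$. As in the balanced case, the total variation distance $\|Z_t-N_d(\alpha t,t\sigma^2)\|_{TV}$ is controlled once we bound $\sum_{k\in\mathbb{Z}}|\prob{Z_t=k}-\phi_t(k)|$, because the contribution from the normalizing constant $D(\alpha t,t\sigma^2)$ is of order $1/(\sigma\sqrt{t})$ by Lemma \ref{lem: bound normalizing constant}. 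Note that the statement of the corollary should be read with mean $\alpha t$ (not $t/2$), which is a typographical slip carried over from the balanced case.

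First I would split $\mathbb{Z}$ into $I_1=\{k\colon|k-\alpha t|\leq c\sqrt{t}+1\}$ and $I_2=\mathbb{Z}\setminus I_1$, for a parameter $c\geq 1$ to be chosen of order $\sqrt{\log t}$. For the tail sum on $I_2$, I would use a Gaussian tail bound (as in the balanced proof) for $\phi_t$, and the Chernoff-type bound of Lemma \ref{lem:chernoff} (applied to the visit count of $(X_i)$ to $B=\{x\colon\Val(x)=1\}$) to bound $\prob{|Z_t-\alpha t|>c\sqrt{t}+1}\leq 4e^{-c^2(1-\lambda)/20}$. Both tails decay as a negative power of $t$ when $c$ is a suitable multiple of $\sqrt{\log t}$.

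For the central sum on $I_1$, I would apply Cauchy--Schwarz to reduce to bounding $\|\varphi_Z-F\|_2$, where $F(\theta)=\sum_{k\in\mathbb{Z}}\varphi_t(\theta+2\pi k)$ on $[-\pi,\pi]$, via Parseval's identity exactly as in the proof of Corollary \ref{cor:main}. The step $\|\varphi_t-F\|_2=O(1/t)$ is identical to the balanced case (only the mean shifts in $\varphi_t$, which does not affect $|\varphi_t|$). The main technical content is an analogue of Lemma \ref{lem: L2 norm of varphiZ and varphit} giving $\|\varphi_Z-\varphi_t\|_2\leq C(\lambda,d,\alpha)/t^{3/4}$: this is obtained by splitting the integral at $\theta_0=(1-\lambda)^2\sigma^2/2708$, using inequality (3.33) of \cite{Mann1996Berry} on $[-\theta_0,\theta_0]$ and the decomposition $Z_t=S_t+Y_t$ constructed in the proof of Theorem \ref{theo:main extension} on $\theta_0\leq|\theta|\leq\pi$, together with the bounds (\ref{eq: bt with alpha}), (\ref{eq: sigma with alpha}), (\ref{eq: eta with alpha}) on $b_t$, $\sigma$, $\eta$ (which now carry $\alpha$-dependence).

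The step I expect to absorb the most bookkeeping, rather than real difficulty, is propagating the $\alpha$-dependence of $b_t$, $\sigma$ and $\eta$ through this $L^2$ estimate and into the final constant; the structure is identical to the balanced case, only the prefactors change. Combining the two pieces and choosing $c=\alpha_0\sqrt{\log t}$ with $\alpha_0$ of order $\max\{\sigma,(1-\lambda)^{-1/2}\}$ yields
\[
\sum_{k\in\mathbb{Z}}|\prob{Z_t=k}-\phi_t(k)|\leq C_6(\lambda,d,\alpha)\,\frac{\log(t)^{1/4}}{\sqrt{t}},
\]
and adding the $O(1/(\sigma\sqrt{t}))$ contribution from Lemma \ref{lem: bound normalizing constant} (absorbed into $C_6$) gives the corollary.
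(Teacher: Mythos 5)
Your proposal is correct and follows essentially the same route as the paper: the same split into $I_1$, $I_2$ around $\alpha t$ with Lemma \ref{lem:chernoff} and Gaussian tails, Cauchy--Schwarz plus Parseval reducing to the unbalanced analogue of Lemma \ref{lem: L2 norm of varphiZ and varphit} with the $\alpha$-dependent bounds (\ref{eq: bt with alpha}), (\ref{eq: sigma with alpha}), (\ref{eq: eta with alpha}), the choice $c$ of order $(1-\lambda)^{-1/2}\sqrt{\log t}$, and Lemma \ref{lem: bound normalizing constant} for the normalizing constant. Your observation that the discretized normal should be centered at $\alpha t$ rather than $t/2$ is also consistent with the paper's own proof, which works with $\phi_t$ of mean $t\alpha$.
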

		The proof of Corollary \ref{cor:main extension} does not optimize the constant, but it shows that we can take
		\[ C_6(\lambda,d,\alpha)=\frac{2\cdot 10^{13} d^9}{\alpha^3 (1-\alpha)^6(1-\lambda)^{41/4}}.\]
		We just need to verify that every result used for the proof of Corollary \ref{cor:main} is also valid for unbalanced labellings. First, Lemma \ref{lem: bound normalizing constant} was proved for arbitrary normal distributions, so it can be used in the unbalanced setting. Next, we can repeat word by word the proof of Lemma \ref{lem: L2 norm of varphiZ and varphit} to extend it for unbalanced labellings. For convenience, we use $\varphi_t$ for the characteristic function of a normal distribution with mean $t\alpha$ and variance $t\sigma^2$, where $\sigma^2=\lim_{t \to \infty} \Var(Z_t)/t$.
		
		\begin{lemma}\label{lem: L2 norm of varphiZ and varphit with alpha}
			The characteristic function of $Z_t$ satisfies
			\[ \|\varphi_Z - \varphi_{t} \|_2 = \left (\frac{1}{2\pi} \int_{-\pi}^\pi |\varphi_Z(\theta) - \varphi_t(\theta)|^2 \, d\theta \right )^{1/2} \leq \frac{8\cdot 10^{12} d^9}{\alpha^3 (1-\alpha)^6(1-\lambda)^{10}}\frac{1}{t^{3/4}}.\]
		\end{lemma}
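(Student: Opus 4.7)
The plan is to mirror the proof of Lemma \ref{lem: L2 norm of varphiZ and varphit} essentially verbatim, only replacing the three constants that actually depend on $\alpha$. Since the hypotheses of Theorem \ref{theo: local CLT Markov} were already verified for the unbalanced setting in the proof of Theorem \ref{theo:main extension}, the random variables $\widetilde{N}_t$, $V_i$, $\widetilde{V}_i$, $S'_t$, $S_t$, $Y_t$ from (\ref{eq: S_t and Y_t definition}) are available and give the decomposition $Z_t = S_t + Y_t$ with high probability, together with the same bound $\mathbb{E}_\pi(|Z_t - S_t - Y_t|) \leq M/t$ with $M = 10^{10} d^3 / (1-\lambda)^4$.

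As before, I would split the integral at $\theta_0 = (1-\lambda)^2\sigma^2/2708$. On $[-\theta_0,\theta_0]$, inequality (3.33) in \cite{Mann1996Berry} applies with the same constant $c = (1-\lambda)^{-2}$ (note that $\|\Val\|_\infty = 1$ and $\mu_0 = \pi$ as in the balanced case), and the computation leading to (\ref{eq: lemma for corollary}) goes through unchanged, yielding a term of order $c^2 / (\sigma^7 t^{3/2})$. On $\theta_0 \leq |\theta| \leq \pi$, the bound (\ref{eq: bound decomposition 2}) from the proof of Theorem \ref{theo: local CLT Markov} gives $|\varphi_Z(\theta)| \leq 1/(e\eta b_t) + \pi M/t$, and $|\varphi_t(\theta)|$ is controlled by the same Gaussian tail estimate $e^{-t\sigma^2\theta^2} \leq 1/(\sigma\theta_0\sqrt{t})$ used before. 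Collecting the contributions and applying $\sqrt{a+b} \leq \sqrt{a} + \sqrt{b}$ gives an inequality of the form
\[
\|\varphi_Z - \varphi_t\|_2 \leq \frac{10^4 c}{\sigma^{7/2} t^{3/4}} + \frac{2}{e\eta b_t} + \frac{\sqrt{2}\pi M}{t} + \frac{1}{\pi^{1/2}\theta_0\sigma^{3/2} t^{3/4}},
\]
identical in form to the one in the balanced proof.

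The only remaining work is to substitute the $\alpha$-dependent bounds (\ref{eq: bt with alpha}), (\ref{eq: sigma with alpha}), (\ref{eq: eta with alpha}) for $b_t$, $\sigma$, $\eta$ in place of (\ref{eq: b_t}), (\ref{eq: bound sigma}), (\ref{eq: bound eta}). Each of these bounds is weaker by a factor that is a power of $\alpha(1-\alpha)$; tracking them through the four summands above gives an overall bound proportional to $\alpha^{-3}(1-\alpha)^{-6}(1-\lambda)^{-10} d^9 \cdot t^{-3/4}$, matching the claimed constant $8\cdot 10^{13}$. There is no conceptual obstacle: the hard analytic ingredients (Mann's inequality, the decomposition into $S_t + Y_t$, the nonlattice bound) are identical to the balanced case. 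The main work is purely bookkeeping of the $\alpha$-factors, and the expected difficulty is only in verifying that the numerical constant indeed fits under $8\cdot 10^{12}$ after collecting all four terms.
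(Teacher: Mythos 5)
Your proposal follows the paper's own proof essentially verbatim: split at $\theta_0=(1-\lambda)^2\sigma^2/2708$, apply Mann's inequality on the central interval, control $\theta_0\leq|\theta|\leq\pi$ via $Z'_t=S_t+Y_t$ with the bounds $1/(e\eta b_t)$ and $\pi M/t$, and then substitute the $\alpha$-dependent bounds (\ref{eq: bt with alpha}), (\ref{eq: sigma with alpha}), (\ref{eq: eta with alpha}) for $b_t$, $\sigma$, $\eta$ — exactly as the paper does. The only blemish is the stray ``$8\cdot 10^{13}$'' in your bookkeeping paragraph, which should read $8\cdot 10^{12}$ as in the statement (and as you write at the end).
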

		
		\begin{proof}
		 Set $\theta_0=(1-\lambda)^2\sigma^2/2708$. For any $\theta \in [-\theta_0,\theta_0]$, inequality (3.33) in \cite{Mann1996Berry} gives 
			\[ |\varphi_Z(\theta)- \varphi_{t}(\theta)| \leq c e^{-t\sigma^2\theta^2/8}\left (683t|\theta|^3 + \frac{20}{\sigma^2}|\theta| \right ),
			\]
			where $c=(1-\lambda)^{-2}$. By repeating the computations in (\ref{eq: lemma for corollary}) we obtain
			\begin{align}
				\int_{-\theta_0}^{\theta_0}  |\varphi_Z(\theta)-\varphi_{t}(\theta)|^2 \, d\theta \leq \frac{5\cdot 10^8c^2}{\sigma^7 t^{3/2}}.
			\end{align}
			It remains to study the case $\theta_0<|\theta| \leq \pi$. As we did for Lemma \ref{lem: L2 norm of varphiZ and varphit}, consider $Z'_t=S_t+Y_t$, where $S_t$ and $Y_t$ are defined as in (\ref{eq: S_t and Y_t definition}) and let $\varphi_{Z'}$ be its characteristic function. Recall that (\ref{eq: bound decomposition}) gives $|\varphi_{Z'}(\theta)|\leq \frac{1}{e \eta b_t}$ for $\theta_0\leq |\theta|\leq \pi$. Moreover, the proof of Theorem \ref{theo: local CLT Markov} shows that 
			\[ |\varphi_Z(\theta) - \varphi_{Z'}(\theta)| \leq \pi \mathbb{E}_{\pi}(|Z_t-Z'_t|) \leq \frac{\pi M}{t} \quad \forall \, \theta \in [-\pi,\pi],\]
			where $M=\frac{10^{10}d^3}{(1-\lambda)^4}$ in view of Lemma \ref{lem: high probability}. Consequently,
			\[ \int_{\theta_0\leq |\theta|\leq \pi} |\varphi_Z(\theta)|^2\, d\theta \leq \frac{4\pi}{e^2 \eta^2 b_t^2} + \frac{2\pi^3 M^2}{t^2}. \]
			On the other hand, we showed in the proof of Lemma \ref{lem: L2 norm of varphiZ and varphit} that
			\begin{align*} \int_{\theta_0\leq |\theta|\leq \pi} |\varphi_{t}(\theta)|^2\, d\theta \leq \frac{1}{\theta_0^2 \sigma^3 t^{3/2}}.
			\end{align*}
			Putting everything together gives
			\[ \|\varphi_Z - \varphi_t\|_2^2 = \frac{1}{2\pi} \int_{-\pi}^{\pi} |\varphi_Z(\theta) - \varphi_t(\theta)|^2\, d\theta \leq \frac{10^8c^2}{\sigma^7 t^{3/2}} + \frac{4}{e^2 \eta^2 b_t^2} + \frac{2\pi^2 M^2}{t^2} + \frac{1}{\pi\theta_0^2 \sigma^3 t^{3/2}}. \]
			Taking square root in both sides and using that $\sqrt{a+b} \leq \sqrt{a}+\sqrt{b}$ yields
			\[\|\varphi_Z - \varphi_t\|_2 \leq \frac{10^4 c}{\sigma^{7/2} t^{3/4}} + \frac{2}{e \eta b_t} + \frac{\sqrt{2}\pi M}{t} + \frac{1}{\pi^{1/2}\theta_0 \sigma^{3/2} t^{3/4}}. \]
			Recall that $b_t$, $\sigma$, and $\eta$, are bounded in (\ref{eq: bt with alpha}), (\ref{eq: sigma with alpha}), and (\ref{eq: eta with alpha}), respectively. The claim follows from these bounds and the above inequality after straightforward computations.
		\end{proof}
		
		\begin{proof}[Proof of Corollary \ref{cor:main extension}]
			The argument is analogous to the one in the proof of Corollary \ref{cor:main}. Given $c\geq 1$, set $I_1=\{k \in \mathbb{Z} \colon |k-t\alpha|\leq c\sqrt{t}+1\}$ and $I_2=\mathbb{Z}\setminus I_1$. First, we bound
			\[ \sum_{k\in \mathbb{Z}} \left |\prob{Z_t=k} - \phi_t(k)\right |
			\]
			by breaking the sum using $I_1$ and $I_2$. To bound the sum on $I_2$ we study the tails of the distributions of $Z_t$ and $\mathcal{N}(t\alpha ,t\sigma^2)$. Let $\phi_t$ denote the density function of $\mathcal{N}(t\alpha ,t\sigma^2)$. On one hand, 
			\begin{align*}
				\sum_{k \in I_2} \phi_t(k) \leq 2\int_{t\alpha +c\sqrt{t}}^{\infty} \phi_t(x) \, dx
				=2 \int_{c/\sigma}^\infty \phi(y) \, dy
				\leq \frac{2}{\sqrt{2\pi}} \int_{c/\sigma}^{\infty} \frac{y}{c/\sigma} e^{-y^2/2}\, dy = \frac{2\sigma}{c\sqrt{2\pi}} e^{-\frac{c^2}{2\sigma^2}}.
			\end{align*}	
			On the other hand, Lemma \ref{lem:chernoff} gives
			\[ \sum_{k \in I_2} \prob{Z_t=k} = \prob{|Z_t-t\alpha|> c\sqrt{t}+1} \leq 4e^{-\frac{(c\sqrt{t}+1)^2(1-\lambda)}{20t}} \leq 4e^{-\frac{c^2(1-\lambda)}{20}}.\]
			Consequently, we have 
			\begin{equation}\label{eq: cor sum on I2 with alpha} \sum_{k\in I_2} \left |\prob{Z_t=k} -\phi_t(k) \right |\leq \frac{2\sigma}{c\sqrt{2\pi}} e^{-\frac{c^2}{2\sigma^2}}+ 4e^{-\frac{c^2(1-\lambda)}{20}}.
			\end{equation}
			For the sum on $I_1$, the Cauchy-Schwarz inequality yields
			\[  \sum_{k\in I_1} \left |\prob{Z_t=k} - \phi_t(k)\right | \leq |I_1|^{1/2} \left ( \sum_{k\in I_1} \left |\prob{Z_t=k} -\phi_t(k)\right |^2  \right )^{1/2}.\]
			We have $|I_1|\leq 2c\sqrt{t}+3\leq 5c\sqrt{t}$. As in the proof of Corollary \ref{cor:main}, to estimate the sum on the right hand side we will use Parseval's identity (see \cite[\S 1.4]{helson2010harmonic}). Define $F \colon [-\pi,\pi] \longrightarrow \mathbb{C}$ by 
			\[ F(\theta)=\sum_{k \in \mathbb{Z}} \varphi_t(\theta+2\pi k) \quad \forall \, \theta \in [-\pi,\pi],\]
			where $\varphi_t$ denotes the ch.f. of the normal distribution with mean $t\alpha$ and variance $t\sigma^2$. Then the Fourier coefficients of $F$, denoted by $a_k(F)$, satisfy 
			\[ a_k(F)=\frac{1}{2\pi} \widehat{\varphi_t}(k)=\phi_t(k) \quad \forall \, k \in \mathbb{Z},\]
			where $\widehat{\varphi_t}$ is the Fourier transform of $\varphi_t$ as defined in \cite[\S 1.2]{helson2010harmonic}. Consequently, Parseval's identity gives 
			\[ \sum_{k\in \mathbb{Z}} \left |\prob{Z_t=k} - \phi_t(k)\right |^2 = \|\varphi_Z - F\|_2^2. \]
			By repeating the computations in the proof of Corollary \ref{cor:main} we obtain	
			\[|\varphi_t(\theta) - F(\theta)| \leq \frac{200 d^3}{(1-\lambda)^2} \frac{1}{t} \quad \forall \, \theta  \in [-\pi,\pi].\]
			Therefore, from Lemma \ref{lem: L2 norm of varphiZ and varphit with alpha} and the triangle inequality we deduce that
			\begin{equation}\label{eq: bound norm L2 with alpha}
				\|\varphi_Z - F\|_2\leq \|\varphi_Z - \varphi_t\|_2 + \|\varphi_t - F\|_2 \leq \frac{9\cdot 10^{12} d^9}{\alpha^3 (1-\alpha)^6(1-\lambda)^{10}}\frac{1}{t^{3/4}}.
			\end{equation}
			In view of (\ref{eq: cor sum on I2 with alpha}) and (\ref{eq: bound norm L2 with alpha}), we conclude that
			\[ \sum_{k\in \mathbb{Z}} \left |\prob{Z_t=k} - \phi_t(k)\right | \leq \frac{2\sigma}{c\sqrt{2\pi}} e^{-\frac{c^2}{2\sigma^2}}+ 4e^{-\frac{c^2(1-\lambda)}{20}} + |5c\sqrt{t}|^{1/2} \frac{9\cdot 10^{12} d^9}{\alpha^3 (1-\alpha)^6(1-\lambda)^{10}}\frac{1}{t^{3/4}}.
			\]
			Notice that Proposition \ref{prop: variance} implies $\sigma\leq (1-\lambda)^{-1/2}$. Taking $c=\sqrt{10}(1-\lambda)^{-1/2}\sqrt{\log t}$ above gives 
			\begin{align*} \sum_{k\in \mathbb{Z}} \left |\prob{Z_t=k} - \phi_t(k)\right | \leq  \frac{3.7\cdot 10^{13} d^9}{\alpha^3 (1-\alpha)^6(1-\lambda)^{41/4}} \frac{\log(t)^{1/4}}{\sqrt{t}}.
			\end{align*}
			Finally, we proceed to study the total variation distance 
			\[\|Z_t - N_d(t/2,t\sigma^2)\|_{TV} = \frac{1}{2}\sum_{k\in \mathbb{Z}} \left |\prob{Z_t=k} -f_{N_d(t/2,t\sigma^2)}(k) \right |.\]
			Using the triangle inequality and (\ref{eq:disc normal}) we can upper bound the previous sum by
			\begin{equation}\label{eq: main cor step 2}\sum_{k\in \mathbb{Z}} \left |\prob{Z_t=k} -\phi_t(k)\right | + \left |1-\frac{1}{D(\mu,t\sigma^2)}\right |\sum_{k \in \mathbb{Z}} \phi_t(k).
			\end{equation}
			The first term in (\ref{eq: main cor step 2}) was bounded above. For the second term, we can apply Lemma \ref{lem: bound normalizing constant} to get
			\[\left |1-\frac{1}{D(\mu,t\sigma^2)}\right |\sum_{k \in \mathbb{Z}} \phi_t(k) = \left |1-\frac{1}{D(\mu,t\sigma^2)}\right | D(\mu,t\sigma^2) = |D(\mu,t\sigma^2)-1| \leq \frac{1}{\sqrt{2\pi}} \frac{1}{\sigma\sqrt{t}}.\qedhere\]
		\end{proof}

		\section{Proofs of Example \ref{ex: K4} and Example \ref{ex: uniform}}\label{sec:examples}
		Let $(X_i)$ denote the SRW on a graph $G=(V,E)$ starting from stationary distribution $\pi$, and write $(Y_i)=(\Val(X_i))$, where $\Val\colon V \longrightarrow \{0,1\}$ is a labelling on $G$. First, we prove Example \ref{ex: K4}.
			
		\begin{proof}[Proof of Example \ref{ex: K4}]
				We want to use the formula (\ref{eq:var of sum}) for the variance of a sum of random variables. We claim that
				\[ \Cov(Y_0,Y_k)=\frac{1}{4(-3)^k} \quad \forall \, k \in \mathbb{N}.\]
				Let $V=\{a,b,c,d\}$ be the set of vertices of $K_4$. Without loss of generality, we may assume that the balanced labelling that we consider satisfies $\Val(a)=\Val(b)=1$ and $\Val(c)=\Val(d)=0$. One can check that the transition matrix of the random walk on $K_4$ can be diagonalized as
				\[
				\begin{pmatrix}
					0 & 1/3 & 1/3 & 1/3 \\
					1/3 & 0 & 1/3 & 1/3 \\
					1/3 & 1/3 & 0 & 1/3	\\
					1/3 & 1/3 & 1/3 & 0 \\
				\end{pmatrix}
				=
				\begin{pmatrix}
					-1 & -1 & -1 & 1 \\
					0 & 0 & 1 & 1 \\
					0 & 1 & 0 & 1	\\
					1 & 0 & 0 & 1 \\
				\end{pmatrix}
				\begin{pmatrix}
					\frac{-1}{3} & 0 & 0 & 0 \\
					0 & \frac{-1}{3} & 0 & 0 \\
					0 & 0 & \frac{-1}{3} & 0	\\
					0 & 0 & 0 & 1 \\
				\end{pmatrix}
				\begin{pmatrix}
					-1/4 & -1/4 & -1/4 & 3/4 \\
					-1/4 & -1/4 & 3/4 & -1/4 \\
					-1/4 & 3/4 & -1/4 & -1/4	\\
					1/4 & 1/4 & 1/4 & 1/4 \\
				\end{pmatrix}
				.\]
				This shows that $K_4$ is a $\frac{1}{3}$-expander graph. Moreover, using the above diagonalization one can compute powers of the transition matrix to get 
				\[ P^k(a,a)= \frac{3^k +3\cdot(-1)^k}{4 \cdot3^k} \quad \mbox{ and } \quad P^k(a,b)=\frac{3^k - (-1)^k}{4\cdot3^k} \quad \forall \, k \in \mathbb{N}.\]
				Therefore, for any $k \in \mathbb{N}$ we have
				\[ \mathbb{E}_\pi(Y_0 Y_k) = \prob{Y_0=1, Y_k=1}=\frac{1}{2} \prob{Y_k=1 | Y_0=1} = \frac{1}{2} (P^k(a,a) + P^k(a,b))= \frac{1}{4}\left (1+\frac{1}{(-3)^k}\right ).\]
				The claim follows from that fact that $\mathbb{E}_\pi(Y_0)\mathbb{E}_\pi(Y_k)=\frac{1}{4}$. In view of (\ref{eq:var of sum}), adding the covariances gives the result since
				\[ 2\sum_{i<j} \Cov(Y_i,Y_j) = 2\sum_{k=1}^{t-1} (t-k)\Cov(Y_0,Y_k) = \frac{1}{2} \sum_{k=1}^{t-1} (t-k)\frac{1}{(-3)^k} = -\frac{1}{8}t + O(1).\qedhere\]
		\end{proof}
	
		Example \ref{ex: uniform} also follows from computing the covariances and applying the formula (\ref{eq:var of sum}) for the variance of a sum of random variables.
		
		\begin{proof}[Proof of Example \ref{ex: uniform}]
			
			Select a labelling $\Val$ uniformly at random from all balanced labellings on $G$, and consider $Z_t= \sum_{i=0}^{t-1} \Val(X_i)$. The variance of $Z_t$, which is taken with respect to the uniform distribution on the set of all balanced labellings on $G$, will be denoted by $\Var(Z_t)$. When the labelling $\Val$ is fixed, we will write $\Var(Z_t|\Val)$ for its variance. We claim that 
			\[ \Var(Z_t) \geq \frac{t}{4} + \frac{1}{2}\left (\frac{1}{d} - \frac{3}{n-1}\right )t + O(1).\]
			Observe that the law of total variance tells us that
			\[ \Var(Z_t)= \mathbb{E}(\Var(Z_t|\Val)) + \Var(\mathbb{E}(Z_t|\Val)).\]
			Since the initial distribution of $(X_i)$ is the stationary one, we have that $\mathbb{E}(Z_t|\Val)=t/2$ for any balanced labelling on $G$, whence $\Var(\mathbb{E}(Z_t|\Val))=0$. Therefore, the claim implies
			\[ \mathbb{E}(\Var(Z_t|\Val)) \geq \frac{t}{4} + \frac{1}{2}\left (\frac{1}{d} - \frac{3}{n-1}\right )t + O(1),\]
			thus there must be a balanced labelling $\Val$ for which $\Var(Z_t|\Val)$ is greater or equal than the right hand side. To prove the claim, we first calculate $\mathbb{E}(Y_0 Y_k)$, where the expectation is taken with respect to the uniform distribution on the set of all balanced labellings on $G$, and $Y_i=\Val(X_i)$. Since the labeling is selected uniformly at random, observe that
			\begin{align*} \mathbb{E}(Y_0 Y_k) &= \prob{Y_k=1, Y_0=1|X_0=X_k} \prob{X_k=X_0} + \prob{Y_k=1, Y_0=1|X_0 \neq X_k} \prob{X_k \neq X_0}\\
						&=\frac{1}{2} \prob{X_k=X_0} + \frac{1}{2} \frac{\frac{n}{2} - 1}{n-1} \prob{X_k \neq X_0} = \frac{1}{2} \prob{X_k=X_0} + \frac{1}{4}\left (1-\frac{1}{n-1}\right ) \prob{X_k \neq X_0}\\
						&= \frac{1}{4} + \frac{1}{4} \left ( \prob{X_k = X_0} - \frac{1}{n-1}  \prob{X_k \neq X_0}\right )= \frac{1}{4} + \frac{1}{4} \left ( \frac{n}{n-1}\prob{X_k = X_0} - \frac{1}{n-1} \right ). 
			\end{align*}
			Recall that $\mathbb{E}(Y_0)=\mathbb{E}(Y_k)=\frac{1}{2}$, which gives
			\[ \Cov(Y_0,Y_k)= \frac{1}{4} \left ( \frac{n}{n-1}\prob{X_k = X_0} - \frac{1}{n-1} \right ).\]
			Adding all covariances gives
			\begin{align*}
				 \sum_{i<j} \Cov(Y_i,Y_j) &= \frac{1}{4} \frac{n}{n-1}\sum_{k=1}^{t-1} (t-k) \prob{X_k=X_0} - \frac{1}{4}\sum_{k=1}^{t-1} (t-k) \frac{1}{n-1}\\
				&= \frac{1}{4}\frac{n}{n-1} \sum_{k=1}^{t-1} (t-k) \prob{X_k=X_0} - \frac{t(t-1)}{8(n-1)} .
			\end{align*}
			Write $P$ for the transition matrix of $(X_i)$. We can use the spectral expansion of $P$ to calculate $\prob{X_k=X_0}$. Let $(\lambda_j)$ be the eigenvalues of $P$ and let $(f_j)$ be an orthonormal basis of $(\mathbb{R}^V, \langle \cdot, \cdot \rangle_{\pi})$ corresponding to $(\lambda_j)$. Lemma 12.2 in \cite{levin2017markov} gives for any $k \in \mathbb{N}$
			\[ P^k(x,y)= \sum_{j=1}^{n} f_j(x)f_j(y) \lambda_j^k \pi(y) \quad \forall \, x,y \in V,\]
			from where we deduce that
			\begin{align*}
				\prob{X_k=X_0}&= \frac{1}{n} \sum_{x \in V} P^k(x,x) = \frac{1}{n} 	\sum_{x \in V} \sum_{j=1}^n f_j(x)^2\lambda_j^k \pi(x)
				= \frac{1}{n} \sum_{j=1}^n \lambda_j^k\sum_{x \in V}  f_j(x)^2 \pi(x) 	=\frac{1}{n} \sum_{j=1}^n \lambda_j^k.
			\end{align*}
			Since $\lambda_1=1$, it is clear that $\prob{X_k=X_0}\geq \frac{1}{n}$ for any even $k \in \mathbb{N}$. Moreover, for any even $k \in \mathbb{N}$ we have
			\[ \prob{X_k=X_0} + \prob{X_{k+1}=X_0} = \frac{1}{n} \sum_{j=1}^n \lambda_j^k + \lambda_j^{k+1} = \frac{1}{n} \sum_{j=1}^n \lambda_j^k (1+\lambda_j)\geq \frac{2}{n}.\]
			Therefore,
			\begin{align*}\sum_{k=1}^{t-1} (t-k) \prob{X_k=X_0} &\geq (t-2) \prob{X_2=X_0}+ \frac{1}{n} \sum_{k=4}^{t-1} (t-k)= (t-2)\frac{1}{d}+ \frac{1}{n} \left (\frac{t(t-1)}{2} - 3t+6\right ) .
			\end{align*}
			Consequently,
			\[ \sum_{i<j} \Cov(Y_i,Y_j)\geq \frac{1}{4}\left ((t-2)\frac{1}{d} +\frac{1}{n-1} \left (\frac{t(t-1)}{2} - 3t+6\right ) \right ) - \frac{t(t-1)}{8(n-1)}=\frac{1}{4} \left (\frac{1}{d} - \frac{3}{n-1}\right )t + O(1). \]
			The claim now follows from the formula (\ref{eq:var of sum}) for the variance of a sum of random variables.
		\end{proof}
	
		\noindent \textbf{Acknowledgment:\ } We are grateful to Professor Fedor Nazarov for helpful suggestions that led to a sharper form of Theorem \ref{theo: local CLT Markov}.

		\bibliographystyle{plain}	\bibliography{A_local_central_limit_theorem_for_random_walks_on_expander_graphs}	
	\end{document}